\documentclass[11pt]{amsart}

\usepackage{amssymb,mathtools}
\usepackage{microtype}
\usepackage[letterpaper,textwidth=6.2in,textheight=8.6in,
  headheight=14pt,headsep=18pt,centering]{geometry}
\usepackage[hidelinks]{hyperref}
\usepackage{bookmark}
\usepackage{needspace}
\hypersetup{
  pdftitle={P-adic Curved Kakeya Sets, Projection Theorem and Covering Numbers},
  pdfsubject={Curved Kakeya sets with polynomial parametrizations, evaluation dimensions, and anisotropic coverings},
  pdfkeywords={p-adic curved Kakeya sets, restricted directions,
    Hausdorff dimension, polynomial projections, Kakeya maximal inequalities,
    exceptional parameters, polynomial-to-line lifting,
    anisotropic covering numbers}
}
\allowdisplaybreaks[1]
\issueinfo{}{0}{}{}
\PII{\mbox{}\hfill}
\copyrightinfo{}{}

\AddToHook{cmd/section/before}{\Needspace{7\baselineskip}}
\AddToHook{env/theorem/before}{\Needspace{6\baselineskip}}
\AddToHook{env/corollary/before}{\Needspace{6\baselineskip}}
\newtheorem{theorem}{Theorem}[section]
\newtheorem{proposition}[theorem]{Proposition}
\newtheorem{lemma}[theorem]{Lemma}
\newtheorem{corollary}[theorem]{Corollary}
\theoremstyle{definition}
\newtheorem{definition}[theorem]{Definition}
\theoremstyle{remark}
\newtheorem{remark}[theorem]{Remark}
\newcommand{\Q}{\mathbb Q}
\newcommand{\Z}{\mathbb Z}
\newcommand{\PP}{\mathbb P}
\newcommand{\HH}{\mathcal H}
\newcommand{\dimH}{\dim_{\mathrm H}}
\newcommand{\red}{\operatorname{red}}
\newcommand{\one}{\mathbf 1}
\newcommand{\defeq}{\mathrel{:=}}
\DeclarePairedDelimiter{\norm}{\lVert}{\rVert}

\begin{document}

\title[Curved Kakeya sets, projection theorem and covering numbers]
  {P-adic Curved Kakeya Sets,\\
   Projection Theorem and Covering Numbers}

\author{Yi Lou}
\address{DEPARTMENT OF MATHEMATICS, NATIONAL UNIVERSITY OF SINGAPORE, SINGAPORE}
\email{lou\_yi@u.nus.edu}

\subjclass[2020]{Primary 28A78; Secondary 26E30, 28A80, 42B25}
\keywords{$p$-adic curved Kakeya sets, restricted directions,
  Hausdorff dimension, polynomial projections, Kakeya maximal inequalities,
  exceptional parameters, polynomial-to-line lifting,
  anisotropic covering numbers}
\date{}

\begin{abstract}
Let $n,d\geq1$, and suppose that $E\subset\Q_p^n$ contains a
degree-$d$ polynomial image of $\Z_p$ in every leading direction of a
nonempty set $V\subset\PP^{n-1}(\Q_p)$. We prove the sharp bound
$\dimH E\geq\dimH V+1$. We also show that for joint block-valued polynomial evaluations,
the total incidence dimension is at least one plus the supremum of
the image dimensions. For analytic coefficient sets, Haar-almost
every evaluation attains this supremum, and we bound the Hausdorff
dimension of the parameters where the image dimension drops. For
bounded coefficient sets and arbitrary fixed anisotropic block
scales, we prove a quantitative covering comparison with a common
Haar-null exceptional set. Outside this set, each evaluation's
covering number dominates the maximum over any compact reference
ball, up to any positive power loss at all sufficiently small scales.
The proofs use the polynomial-to-line lifting map due to Nadjimzadah,
together with Dhar's prime-power Kakeya set estimate and a tube maximal
inequality derived from it.
ChatGPT (OpenAI) was used to assist with mathematical discussions,
proofreading, and bibliographic searches.
\end{abstract}

\maketitle
\section{Introduction and main results}
\label{sec:introduction}

In this article, we study curved Kakeya sets with polynomial
parametrizations and the images of polynomial evaluation maps over
$\Q_p$. The proofs use the explicit \emph{polynomial-to-line lifting} map
from Nadjimzadah's polynomial construction \cite{Nadjimzadah}.
We first give a sharp dimension bound for a set containing polynomial
curves with prescribed leading directions. We then use the same
construction to compare the dimensions of evaluation images and to
estimate the exceptional parameters. Finally, we prove a comparison
of anisotropic covering numbers for bounded coefficient sets.

In real-variable harmonic analysis, curved Kakeya problems arise
naturally from norm estimates for variable-coefficient oscillatory
integral operators of H\"ormander type
\cite{Hormander,Wisewell}. The restriction to a
prescribed set of directions is motivated by the finite-field Kakeya
maximal and algebraic-curve estimates of Ellenberg, Oberlin, and Tao
\cite[Theorems~1.3 and~1.6]{EOT}. In particular, their work implies the
restricted curved Kakeya bound for polynomial images over finite
fields \cite[Corollary~1.10]{EOT}.

The quantitative finite-ring input is Dhar's prime-power Kakeya
set estimate \cite[Theorem~1.8]{DharSet}. It yields the
restricted-direction line theorem and the finite-ring covering
comparison. We also derive from this set estimate a tube maximal
inequality with a logarithmic loss; see
Lemma~\ref{lem:projection-tube-maximal}. Together with Frostman
measures and measurable selection, this inequality gives the
projection--incidence theorem. We then apply the lifting map to
obtain the corresponding results for polynomial families.

We fix a prime $p$ and normalize $|p|_p=p^{-1}$. Cartesian powers
of $\Q_p$ carry the supremum metric, and $\dimH$ denotes Hausdorff
dimension. The projective metric and Haar measure are specified in
Section~\ref{sec:lines}. Other notations are introduced when they are used.

\subsection*{Curved Kakeya sets and evaluation dimensions}
We consider curved Kakeya sets with polynomial parametrizations.
A degree-$d$ polynomial map with nonzero leading coefficient $v$
has \emph{leading direction} $[v]\in\PP^{n-1}(\Q_p)$.
A set containing the image of $\Z_p$ under such a map for every
direction in $V$ is called a \emph{degree-$d$ curved Kakeya set in
the directions $V$}. Our first result gives a lower bound for its
Hausdorff dimension in terms of $V$. No condition is imposed on the
lower coefficients.

\begin{theorem}[Curved Kakeya sets over $\Q_p$]
\label{thm:padic}
Let $n,d\geq1$, and let $V\subset\PP^{n-1}(\Q_p)$ be nonempty.
Let $E\subset\Q_p^n$. Suppose that for every $\omega\in V$ there is a
polynomial map
\begin{equation*}
 \gamma_\omega:\Q_p\longrightarrow\Q_p^n,\qquad
 \gamma_\omega(t)\defeq \sum_{j=0}^{d-1}a_{\omega,j}t^j+v_\omega t^d,
\end{equation*}
where $a_{\omega,j}\in\Q_p^n$, $v_\omega\neq0$, and
$[v_\omega]=\omega$, such that
\begin{equation*}
 \gamma_\omega(\Z_p)\subset E.
\end{equation*}
Then
\begin{equation*}
 \dimH E\geq\dimH V+1.
\end{equation*}
In particular, if $V=\PP^{n-1}(\Q_p)$, then $\dimH E=n$.
\end{theorem}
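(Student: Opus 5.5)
We reduce the statement to a restricted-direction line Kakeya theorem over $\Q_p$ in a larger ambient space, using Nadjimzadah's polynomial-to-line lifting. The line theorem itself comes from Dhar's prime-power Kakeya set estimate \cite[Theorem~1.8]{DharSet}. Its form is: if $F\subset\Q_p^N$ contains a unit line segment $\{b_\omega+t\,w:\ t\in\Z_p\}$ in every direction $w$ of a set $W\subset\PP^{N-1}(\Q_p)$, then $\dimH F\geq\dimH W+1$. The proof of this line theorem runs as follows. Take a Frostman measure on a compact piece of $W$ of exponent $s<\dimH W$. Discretize at scale $p^{-k}$, which reduces to a Kakeya-type set in $(\Z/p^k\Z)^N$ with directions forming a set of size $\gtrsim p^{ks}$. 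Dhar's estimate, or the tube maximal inequality of Lemma~\ref{lem:projection-tube-maximal} applied with this measure, then shows that any cover of $F$ by balls of radius $p^{-k}$ needs $\gtrsim p^{k(s+1)-\epsilon k}$ balls. A standard pigeonholing over dyadic scales turns this covering bound into a Hausdorff-dimension bound, as in the usual maximal-function-to-dimension argument. Measurable selection of $b_\omega$ is needed here; we may first pass to a Borel (even compact) subset of $V$ of nearly full dimension, on which the $a_{\omega,j}$ can be chosen measurably and bounded after throwing away countably many pieces.

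\textbf{Lifting.} We look for a map $\Phi:\Q_p^n\to\Q_p^N$, built from monomials of the coordinates (Nadjimzadah's construction), with three properties. First, $\Phi$ is locally bi-Lipschitz, or at least Lipschitz, so that dimension does not increase: $\dimH\Phi(E)\le\dimH E$. Second, for each $\omega$, $\Phi(\gamma_\omega(\Z_p))$ contains a line segment whose direction $\Psi(\omega)$ depends only on the leading direction. Third, $\omega\mapsto\Psi(\omega)$ does not lower dimension, for instance because it is injective and locally bi-Lipschitz on the relevant piece. A more robust alternative is to encode the lower coefficients into extra coordinates. For $d=1$ there is nothing to do. In general, consider the curve $t\mapsto(\gamma_\omega(t),t,t^2,\dots)$, or rather replace $E$ by the incidence set $\{(x,t)\}$. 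The paper's abstract mentions a ``polynomial-to-line lifting,'' so I would follow that: lift $E$ to $\widetilde E\subset\Q_p^{n}\times\Q_p^{m}$ with $\dimH\widetilde E\le\dimH E+0$ after a projection argument. The cleanest way to get this is a lifting in the opposite direction, writing $E$ as a Lipschitz image of a line-Kakeya set, which gives $\dimH$ of the line set $\le\dimH E$ up to the fibers being points.

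\textbf{Main obstacle.} The hard step is arranging that the lifting preserves the direction information, with $\dimH\Psi(V)\ge\dimH V$, while not increasing the dimension of the set. Rescaling the parameter, $t\mapsto p^k t$, on small subballs would make the curve nearly a line with direction $v_\omega$ up to $O(p^{-k})$ errors. This is why the estimate must be scale-uniform: I would run the discretized Dhar argument directly on $p^{-k}$-neighbourhoods of curves. Over $\Z/p^k$, the map $t\mapsto\gamma_\omega(t)$, viewed through the lifting, becomes a line in a higher-dimensional module. Dhar's bound is applied there, and one then projects back, controlling the multiplicity of the projection by the degree $d$, which costs only a constant factor. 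The final claim, $V=\PP^{n-1}(\Q_p)$ giving $\dimH E=n$, follows since $\dimH\PP^{n-1}=n-1$ and trivially $\dimH E\le n$.
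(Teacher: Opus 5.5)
There is a genuine gap in the lifting step. For $d\geq2$ that step is the whole content of the theorem, and none of the versions you sketch works.

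\emph{The forward map.} A map $\Phi:\Q_p^n\to\Q_p^N$ that sends each $\gamma_\omega(\Z_p)$ to a set containing a segment whose direction depends only on $\omega$ is never constructed. For polynomial $\Phi$ it cannot exist once $\dimH V>0$. A polynomial curve with infinitely many points on a line lies in that line. Demanding this for all lower coefficients forces either $\operatorname{rank}D\Phi\leq1$ everywhere, or $\Phi$ constant along every leading direction.

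\emph{The ``opposite direction'' version.} Here you have the right object, since $E=\Phi(\Phi^{-1}(E))$. But being a Lipschitz image only gives $\dimH E\leq\dimH\Phi^{-1}(E)$, which is the wrong inequality. What is needed is the upper bound $\dimH\Phi^{-1}(E)\leq\dimH E+(d-1)n+1$. The fibers are $((d-1)n+1)$-dimensional, not points.

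\emph{The finite-ring sketch.} Each fiber of the lift modulo $p^k$ has $q^{(d-1)n+1}$ elements, where $q=p^k$. So projecting back is not a loss controlled by $d$.

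\emph{Local linearization.} Zooming into $t\in t_0+p^k\Z_p$ makes $\gamma_\omega$ look like a segment in the direction $\gamma_\omega'(t_0)$. That direction is governed by the lower coefficients, not by $v_\omega$, so this discards exactly the information you need.

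The missing idea is exact bookkeeping of these auxiliary dimensions. Take $\Phi(x_0,\ldots,x_{d-1},z)=\sum_{j=0}^{d-1}z^jx_j$ and a curve $\gamma(t)=\sum_{j<d}a_jt^j+wt^d$ with $\gamma(\Z_p)\subset E$. Choose $u_0,\ldots,u_{d-2}$ freely and set $u_{d-1}=w$, $b_0=a_0$, $b_j=a_j-u_{j-1}$. Telescoping gives $\Phi(b_0+tu_0,\ldots,b_{d-1}+tu_{d-1},t)=\gamma(t)$.

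Hence $\Phi^{-1}(E)$ contains segments in all directions $[u_0:\cdots:u_{d-2}:w:1]$, a set of dimension $(d-1)n+\dimH W$. Meanwhile the triangular automorphism $(e,x_1,\ldots,x_{d-1},z)\mapsto\bigl(e-\sum_{j\geq1}z^jx_j,x_1,\ldots,x_{d-1},z\bigr)$ identifies $\Phi^{-1}(E)$ with $E\times\Q_p^{(d-1)n+1}$. The free slope blocks absorb $(d-1)n$ of the auxiliary dimensions.

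The last auxiliary dimension comes from the coordinate $z$. It is absorbed only because the lifted direction depends on the leading \emph{vector} $w$, not just on $[w]$. The paper takes $W=\{\alpha^dv_\omega:\alpha\in\Z_p^\times,\ \omega\in V\}$. This is legitimate because $\gamma_\omega(\alpha t)$ has the same image on $\Z_p$. It then proves $\dimH W=\dimH V+1$, using that $\Z_p^\times/(\Z_p^\times)^d$ is finite (by Hensel). Your target $\dimH\Psi(V)\geq\dimH V$ misses this $+1$.

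Separately, your proof of the line theorem fails for the statement as given. You pass to a compact subset of $V$ of nearly full dimension and use Frostman measures and measurable selection. But $V$ is arbitrary: a Bernstein set in $\PP^{n-1}(\Q_p)$ has full dimension, yet all its compact subsets are countable. The paper's Arsovski-style argument needs no regularity. Given any cheap cover of the set containing the segments, it pigeonholes a scale for each direction. It then uses Dhar's estimate to produce a cheap cover of the direction set.
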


When $d=1$, this is a statement about line segments. For higher
degrees, we apply the line-segment result to the inverse image under
the lifting map. We shall compute the dimensions of this inverse
image and its direction set in Section~\ref{sec:lifting}.

We next consider polynomials whose coefficients belong to a fixed
set. The aim is to compare their evaluation images at different
parameters.

To define these maps, fix integers $n,d\geq1$ and scalar weights
$a_0,\ldots,a_d\in\Q_p$ with $a_d\neq0$. Define
\begin{equation}\label{eq:projection-family}
 P_r:(\Q_p^n)^{d+1}\longrightarrow\Q_p^n,\qquad
 P_r(v_0,\ldots,v_d)\defeq\sum_{j=0}^d a_jr^jv_j,
 \qquad r\in\Q_p.
\end{equation}
For fixed $r$, the map $P_r$ is linear in the $d+1$ coefficient
vectors. We call these evaluation maps \emph{polynomial projections}.
The fixed weights satisfy only $a_d\neq0$; the lower weights may
vanish. We prove the following comparison with $P_0(\mathcal E)$.

\begin{theorem}[Polynomial projections and exceptional parameters]
\label{thm:polynomial-projections}
For the family \eqref{eq:projection-family}, let
$\mathcal E\subset(\Q_p^n)^{d+1}$ be nonempty and analytic, and set
$S\defeq \dimH P_0(\mathcal E)$. Then
\begin{equation}\label{eq:projection-ae}
 \dimH P_r(\mathcal E)\geq S
 \quad\text{for Haar-almost every }r\in\Q_p.
\end{equation}
For every $0\leq\sigma<S$,
\begin{equation}\label{eq:projection-exceptional}
 \dimH\{r\in\Q_p:\dimH P_r(\mathcal E)\leq\sigma\}
 \leq 1-\frac{S-\sigma}{dn}.
\end{equation}
\end{theorem}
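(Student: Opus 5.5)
The plan is to reduce to a family of polynomial curves parametrized by points of $P_0(\mathcal E)$, and then to run a Kaufman-type energy argument. The only input needed is a sublevel-set estimate for vector polynomials in $r$.

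\emph{Reduction.} If $a_0=0$ then $P_0(\mathcal E)$ is a point, so $S=0$. Both \eqref{eq:projection-ae} and \eqref{eq:projection-exceptional} are then vacuous, so assume $a_0\neq0$. Then $P_0(\mathcal E)=a_0\pi_0(\mathcal E)$ is analytic. Fix $s<S$ and take a compactly supported $s$-Frostman measure $\mu$ on $P_0(\mathcal E)$. By the Jankov--von Neumann uniformization theorem there is a $\mu$-measurable selection $y\mapsto x(y)=(v_0(y),\dots,v_d(y))\in\mathcal E$ with $P_0(x(y))=y$. Shrinking to a compact set of positive $\mu$-measure (Lusin), we may assume the selection is continuous and all coefficients are bounded. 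This gives a family of curves
\[
 \gamma_y(r)\defeq P_r(x(y))=y+\sum_{j=1}^d a_jr^jv_j(y),
\]
and $P_r(\mathcal E)\supset\{\gamma_y(r)\}$. Set $\mu_r\defeq(\gamma_\cdot(r))_\#\mu$. It suffices to show the following: for every compact ball $B\subset\Q_p$, for Haar-a.e.\ $r\in B$ the measure $\mu_r$ has finite $t$-energy for all $t<s$, and the set of $r$ where $I_t(\mu_r)=\infty$ has dimension at most $1-(s-t)/(dn)$. Letting $s\uparrow S$ and $t\downarrow\sigma$ then gives both claims.

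\emph{Energy computation.} Write $\Delta(r)=\gamma_y(r)-\gamma_{y'}(r)$, a $\Q_p^n$-valued polynomial of degree $\le d$ whose constant term has norm $|y-y'|$. The key estimate is the sublevel bound
\[
 \bigl|\{r\in B:|\Delta(r)|\le\delta\}\bigr|\lesssim_B\Bigl(\frac{\delta}{|y-y'|}\Bigr)^{1/d}.
\]
Over $\Q_p$ this follows by factoring one coordinate polynomial whose constant coefficient is maximal, using Newton polygon and ultrametric root-counting arguments. For the sharper exponent $1/(dn)$ needed at the exceptional set, I would instead use the joint version: all $n$ coordinates are simultaneously small only on a set of $r$ whose covering by $\rho$-balls is controlled. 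This step can be phrased through the lifting map of Section~\ref{sec:lifting}, which turns $\gamma_y$ into a line whose direction data involve the $dn$ coordinates of $(v_1,\dots,v_d)$. Integrating against a Frostman measure $\nu$ of exponent $\alpha$ on $B$, the sublevel bound gives
\[
 \int I_t(\mu_r)\,d\nu(r)\lesssim\iint|y-y'|^{-t}\,\Phi_\alpha(|y-y'|)\,d\mu\,d\mu<\infty
\]
in the appropriate range. With $\nu$ Haar this yields a.e.\ finiteness for every $t<s$. With $\alpha>1-(s-t)/(dn)$ it yields finiteness $\nu$-a.e. Hence, by Frostman's lemma, any compact subset of the exceptional set has dimension at most $1-(s-t)/(dn)$. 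Analyticity of the exceptional set is needed here and follows from analyticity of $\{(r,z):z\in P_r(\mathcal E)\}$ and standard capacity/dimension results.

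\emph{Main obstacle.} The main obstacle is the quantitative sublevel estimate with the exponent matching $1/(dn)$ uniformly in the lower coefficients. No nondegeneracy of $v_1,\dots,v_d$ is assumed, and some $a_j$ may vanish. If the direct Newton-polygon argument does not give the right exponent, I would fall back on the projection--incidence theorem for lines (via Lemma~\ref{lem:projection-tube-maximal}) applied to the lifted family. Its exceptional-set bound for line directions in $\PP^{dn}$ would then be transferred back through the lifting map, where the loss of the factor $dn$ comes from the dimension of the lifted direction space.
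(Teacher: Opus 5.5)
\textbf{There is a genuine gap: the energy route cannot reach the theorem, and the fallback is missing its key step.}

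\emph{The energy route.} The obstacle is not a matter of finding the right exponent. To bound $\int_B I_t(\mu_r)\,d\nu(r)$ by $I_t(\mu)$, you need the pairwise estimate $\int_B\norm{\Delta_{y,y'}(r)}^{-t}\,d\nu(r)\lesssim\norm{y-y'}^{-t}$. With Haar $\nu$, the scalar sublevel bound you state gives this only for $t<1/d$. So at best you get $\dimH P_r(\mathcal E)\geq\min\{S,1/d\}$ almost everywhere, while $S$ can be as large as $n$.

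No ``joint'' sublevel bound can do better. $\Delta_{y,y'}$ may be a scalar polynomial times a fixed vector, so its $n$ coordinates give no extra decay. (Also, $1/(dn)\leq1/d$ is a weaker exponent, not a sharper one.)

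Concretely, take $d=1$, $a_0=a_1=1$, $n\geq2$, and $\mathcal E=\{(y,-y):y\in K\}$ with $K\subset\Z_p^n$ compact and $\dimH K=S>1$. The selection is forced, every curve passes through $0$ at $r=1$, and $P_r(\mathcal E)=(1-r)K$. Then
\[
 \int_{\Z_p}I_t(\mu_r)\,dr=I_t(\mu)\int_{\Z_p}|1-r|_p^{-t}\,dr=\infty\qquad(1\leq t<S),
\]
even though the conclusion of the theorem holds for every $r\neq1$.

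The pairwise estimate breaks whenever two curves (nearly) cross over $B$. Controlling how often that happens is a Kakeya-type problem. For $d=1$, the theorem says that almost every slice $\{w+tu\}$ of an analytic line family in $\Q_p^{n+1}$ has dimension at least that of the slope set. With full slope set in $\Q_p^2$, this already gives $\dimH\geq3$ for Kakeya sets in $\Q_p^3$. This is why the paper's proof goes through the $L^{dn+1}$ tube maximal inequality of Lemma~\ref{lem:projection-tube-maximal}, derived from Dhar's set estimate. The denominator $dn=m-1$ comes from the order-$m/(m-1)$ moment of slice masses in Theorem~\ref{thm:quantitative-line-slicing}, not from a sublevel exponent.

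\emph{The fallback.} Applying Theorem~\ref{thm:quantitative-line-slicing} to a lifted line family is the paper's route, but as you describe it, it would not prove the theorem. If you apply the lift of Section~\ref{sec:lifting} directly, the slope of the lifted line is $(u_0,\ldots,u_{d-2},a_dv_d)$ with $u_j$ free. The lower coefficients $v_1,\ldots,v_{d-1}$ enter only the intercept, not the direction as you state. The theorem would then compare $P_r(\mathcal E)$ with the leading-coefficient set $\{a_dv_d\}$, not with $P_0(\mathcal E)$.

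The missing step is to reverse the polynomial. With $b_j=a_jv_j$, put $q_b(t)=\sum_jb_jt^{d-j}=t^dP_{1/t}(v)$, whose leading coefficient is $b_0\in P_0(\mathcal E)$. Take lines $t\mapsto(t,w+tu)$ in $\Q_p^{dn+1}$ with $u=(b_0,c_1,\ldots,c_{d-1})$ and $w=(b_1-c_1,\ldots,b_{d-1}-c_{d-1},b_d)$, where the $c_j\in\Q_p^n$ are free.
\begin{itemize}
\item The slope set is exactly $P_0(\mathcal E)\times(\Q_p^n)^{d-1}$, of dimension $S+(d-1)n$.
\item Telescoping gives $R_t(w+tu)=q_b(t)$ for $R_t(z)=\sum_{j=0}^{d-1}t^{d-1-j}z_j$.
\item Hence, for $t\neq0$, the slice lies in $R_t^{-1}(t^dP_{1/t}(\mathcal E))$. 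A linear change of coordinates identifies this set with $t^dP_{1/t}(\mathcal E)\times(\Q_p^n)^{d-1}$, so $\dimH H_t\leq(d-1)n+\dimH P_{1/t}(\mathcal E)$. The paper proves equality.
\end{itemize}
Applying Theorem~\ref{thm:quantitative-line-slicing} with $m=dn+1$, the auxiliary $(d-1)n$ cancels in both conclusions, leaving exactly $1-(S-\sigma)/(dn)$. The map $t\mapsto1/t$ is a similarity on each valuation annulus, so it transfers both statements back to $r\neq0$. The point $r=0$ is never exceptional since $\sigma<S$.
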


Here a subset of a Polish space is called \emph{analytic} if it
is a continuous image of a Polish space. Every Borel set is analytic
\cite{Kechris}. Analyticity is assumed for the individual-image
dimension results, where we choose Frostman measures and measurable
families of lines. The lifting construction and the total-incidence
bounds in Propositions~\ref{prop:polynomial-total-incidence}
and~\ref{prop:block-total-incidence} apply to arbitrary nonempty
coefficient sets.

By translating the parameter, we may use any point of $\Q_p$ as the reference. This is proved in
Proposition~\ref{prop:arbitrary-reference-parameter}. Choosing a
sequence of reference images whose dimensions approach the
supremum then gives the following corollary.

\begin{corollary}[Almost-everywhere maximal projection dimension]
\label{cor:maximal-projection-dimension}
Under the hypotheses of Theorem~\ref{thm:polynomial-projections}, put
\begin{equation*}
 S_*\defeq \sup_{s\in\Q_p}\dimH P_s(\mathcal E).
\end{equation*}
Then
\begin{equation}\label{eq:projection-maximal-ae}
 \dimH P_r(\mathcal E)=S_*
 \quad\text{for Haar-almost every }r\in\Q_p.
\end{equation}
For every $0\leq\sigma<S_*$,
\begin{equation}\label{eq:projection-maximal-exceptional}
 \dimH\{r\in\Q_p:\dimH P_r(\mathcal E)\leq\sigma\}
 \leq 1-\frac{S_*-\sigma}{dn}.
\end{equation}
In particular, a lower bound $\dimH P_{r_0}(\mathcal E)\geq S$ at any
one parameter implies $\dimH P_r(\mathcal E)\geq S$ for Haar-almost
every $r$.
\end{corollary}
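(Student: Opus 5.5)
Our plan is to deduce the corollary from Theorem~\ref{thm:polynomial-projections} applied with the reference parameter moved to an arbitrary point $s\in\Q_p$, as allowed by Proposition~\ref{prop:arbitrary-reference-parameter}. We use that proposition in the following form. For each $s\in\Q_p$, the conclusions \eqref{eq:projection-ae} and \eqref{eq:projection-exceptional} hold with $S$ replaced by $S_s\defeq\dimH P_s(\mathcal E)$. The mechanism is to write $r^j=\sum_k\binom{j}{k}s^{j-k}(r-s)^k$ and regroup. Then $P_r=P'_{r-s}\circ L_s$, where $L_s$ is an invertible linear map of $(\Q_p^n)^{d+1}$ and $P'$ is again a family of the form \eqref{eq:projection-family}. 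Its top weight is still $a_d\neq0$, and $P'_0\circ L_s=P_s$. Since $L_s(\mathcal E)$ is analytic and translation $r\mapsto r-s$ preserves Haar measure and Hausdorff dimension, this gives the claim.

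\emph{Almost-everywhere statement.} If $S_*=0$, then $\dimH P_r(\mathcal E)=0=S_*$ for every $r$, and there is nothing to prove. Otherwise, choose parameters $s_k\in\Q_p$ with $S_{s_k}\to S_*$. For each $k$ there is a Haar-null set $N_k$ such that $\dimH P_r(\mathcal E)\geq S_{s_k}$ for all $r\notin N_k$. Put $N\defeq\bigcup_kN_k$; it is Haar-null as a countable union. For $r\notin N$ we get $\dimH P_r(\mathcal E)\geq\sup_kS_{s_k}=S_*$. The reverse inequality $\dimH P_r(\mathcal E)\leq S_*$ holds for every $r$ by definition of $S_*$. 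This proves \eqref{eq:projection-maximal-ae}. If $S_*=\infty$ is a concern, note that $S_*\leq n$ always, since every image lies in $\Q_p^n$.

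\emph{Exceptional-set bound.} Fix $0\leq\sigma<S_*$ and let $\varepsilon>0$. Choose $s$ with $S_s>\max(\sigma,S_*-\varepsilon)$. By the reference-shifted form of \eqref{eq:projection-exceptional},
\[
\dimH\{r:\dimH P_r(\mathcal E)\leq\sigma\}\leq1-\frac{S_s-\sigma}{dn}<1-\frac{S_*-\varepsilon-\sigma}{dn}.
\]
Letting $\varepsilon\to0$ gives \eqref{eq:projection-maximal-exceptional}. For the final sentence, a bound $\dimH P_{r_0}(\mathcal E)\geq S$ gives $S_*\geq S$, so $\dimH P_r(\mathcal E)=S_*\geq S$ for Haar-almost every $r$.

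The only genuine content is the reparametrization step in Proposition~\ref{prop:arbitrary-reference-parameter}. There one must check that the Taylor re-expansion produces a family of exactly the type \eqref{eq:projection-family}: scalar weights, top weight nonzero, and lower weights possibly zero. One must also check that the change of coefficients is an invertible linear map, so that analyticity is preserved. Since that proposition is stated earlier, the corollary itself reduces to the countable-union and limiting arguments above.
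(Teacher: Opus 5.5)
Your proposal is correct and follows the paper's proof essentially step for step. Both use the reference shift of Proposition~\ref{prop:arbitrary-reference-parameter}, a countable sequence of references whose image dimensions increase to $S_*$, a countable union of null sets for the almost-everywhere equality, and a limiting argument for the exceptional-set bound. One small correction to your remark on the mechanism: when some lower weight $a_j$ vanishes, the coefficient change $v\mapsto\bigl(\sum_{j'\geq j}\binom{j'}{j}s^{j'-j}a_{j'}v_{j'}\bigr)_j$ is not invertible, since it is block triangular with diagonal blocks $a_jI_n$; this is harmless, because the argument only needs that a continuous linear image of an analytic set is analytic, which is what the paper uses.
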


We note that the corollary compares the dimensions of the
evaluation images, rather than comparing the dimensions to
$\dimH\mathcal E$ alone. In the scalar case
$n=1$, Shen \cite[Theorem~1.4]{Shen} gives a stronger
exceptional-parameter estimate for the scalar linear family $x+ry$. Still in the scalar case $n=1$, $a_j\neq0$ and $p>d+1$, Johnsrude and Lin
\cite[Theorem~1.1]{JL} have shown
$\dimH P_r(\mathcal E)=\min\{1,\dimH\mathcal E\}$ for Borel
$\mathcal E$ and almost every $r\in\Z_p$, after a diagonal change of coefficients. In the vector-valued setting, all spatial coordinates use the same
parameter, and the dimension of $\mathcal E$ need not be preserved. We give an example in
Remark~\ref{rem:coefficient-dimension-counterexample} and discuss
the sharpness of the bounds in Remark~\ref{rem:projection-sharpness}. In addition, for the block evaluation introduced below, Johnsrude--Lin's formulation in \cite{JL} does not apply to general coefficient vectors and scalar weights.

\subsection*{Block evaluations and anisotropic covering numbers}
We now extend the evaluation family to several output blocks.
The blocks may have different dimensions and degrees, but the
parameter is the same in every block. Fix integers $k\geq1$,
$n_i\geq1$, and $d_i\geq0$. For each $i$, let
$a_{i,j}\in\Q_p$, $0\leq j\leq d_i$, satisfy
$a_{i,d_i}\neq0$. Set
\begin{equation*}
 \mathcal V\defeq\prod_{i=1}^k(\Q_p^{n_i})^{d_i+1},
 \qquad N\defeq\sum_{i=1}^k n_i,
\end{equation*}
and, for $v=(v_{i,j})\in\mathcal V$, define
\begin{equation}\label{eq:block-evaluation-family}
\begin{aligned}
 P^i_r(v_{i,0},\ldots,v_{i,d_i})
       &\defeq\sum_{j=0}^{d_i}a_{i,j}r^jv_{i,j},\\
 \widetilde P_r(v)
       &\defeq\bigl(P^i_r(v_{i,0},\ldots,v_{i,d_i})\bigr)_{i=1}^k
       \in\Q_p^N.
\end{aligned}
\end{equation}
In particular, $\widetilde P_0(v)=(a_{i,0}v_{i,0})_{i=1}^k$.
Each block has $d_i+1$ coefficient vector, namely $v_{i,0},\cdots,v_{i,d_i}$. As in the single-output
family, only the leading scalar weights are assumed to be nonzero.
A block of degree $d_i=0$ is constant in $r$.

The exceptional-set bounds depend on the quantity
\begin{equation}\label{eq:block-denominator}
 \Lambda\defeq\sum_{i=1}^k n_i\max\{1,d_i\}.
\end{equation}
If every $d_i\geq1$, this is just $\sum_i n_i d_i$. The auxiliary
notation for the block lifts will be introduced in
Section~\ref{sec:block-evaluations}.

For any nonempty $\mathcal E\subset\mathcal V$, put
$S_*\defeq\sup_{s\in\Q_p}\dimH\widetilde P_s(\mathcal E)$.
Proposition~\ref{prop:block-total-incidence} gives the block version of
Proposition~\ref{prop:polynomial-total-incidence}:
\begin{equation*}
 \begin{aligned}
 S_*+1
 &\leq\dimH\{(r,\widetilde P_r(v)):r\in\Q_p,\ v\in\mathcal E\}\\
 &\leq\min\{\dimH\mathcal E+1,N+1\}.
 \end{aligned}
\end{equation*}
These inequalities do not require any regularity assumptions. Like the single-block case, for individual images, we need to assume that
$\mathcal E$ is analytic. In this case,
Theorem~\ref{thm:block-valued-projections} and
Corollary~\ref{cor:block-reference-maximal} give
\begin{gather*}
 \dimH\widetilde P_r(\mathcal E)=S_*
       \quad\text{for Haar-almost every }r,\\
\dimH\{r:\dimH\widetilde P_r(\mathcal E)\leq\sigma\}
       \leq1-\frac{S_*-\sigma}{\Lambda}
       \qquad(0\leq\sigma<S_*).
\end{gather*}

We also compare covering numbers of the evaluation images at different
scales in the output blocks. This choice is motivated by dynamical
dimension arguments, where diagonal actions expand different weight
spaces at different rates. If block $i$ expands by $e^{\alpha_i t}$,
the inverse image of a unit product ball has block radii
\begin{equation*}
 e^{-\alpha_i t}=\delta^{\alpha_i},\qquad \delta=e^{-t}.
\end{equation*}
Thus covering a set by these rectangles is equivalent to covering its
image under the diagonal action by unit product balls. Polynomial
evaluations arise naturally in this setting. In an irreducible
$\mathrm{SL}_2$-representation, applying an upper-triangular unipotent
element $u_r$ makes the highest-weight coordinate polynomial in $r$.
For the normalization
$a_t=\operatorname{diag}(e^{t/2},e^{-t/2})$, this coordinate expands
by $e^{d_i t/2}$ when the representation has dimension $d_i+1$,
giving the covering exponent $\alpha_i=d_i/2$. This description
is used explicitly by Ohm and Lin \cite[Sections~1 and~3]{OhmLin}.
The role of such rectangular geometry in dimension arguments is
illustrated by Lindenstrauss, Mohammadi, Wang and Yang
\cite[Lemma~6.2 and Sections~6--8]{LMWY}, who use boxes aligned with
the diagonal weight spaces to control concentration under
$\operatorname{Ad}(a_tu_r)$. Their restricted projection input comes
from Gan, Guo and Wang \cite[Theorem~2.1]{GGW}, and the resulting
localized energy estimates enter an iterative dimension argument
through modified Margulis functions. B\'enard and He
\cite[Sections~2.1 and~4.3]{BenardHe} use rectangular covering
numbers in a related multislicing argument. In suitable charts,
inverse images of small balls have scales $1,\delta^{1/2},\delta$,
and mass estimates for these rectangles yield dimension increments
for random walks. A corresponding projection-and-expansion argument
appears over $\Q_p$ in Lin
\cite[Section~4, proof of Proposition~4.1]{LinDensity}, for $p>3$:
discretized dimension is first transferred to a polynomially varying
expanding coordinate, and a diagonal element then expands the
resulting set to unit size. These arguments motivate the anisotropic
covering numbers considered here.

For $\boldsymbol\alpha\in[0,\infty)^k$ and $0<\delta<1$, let
$N_\delta^{\boldsymbol\alpha}(A)$ denote the least number of product
balls with block radii $\delta^{\alpha_i}$ needed to cover $A$.
The precise lattice definition is given in
Definition~\ref{def:block-rectangles}. We prove the following
comparison for bounded coefficient sets.

\begin{theorem}[Almost-everywhere anisotropic maximality]
\label{thm:intro-anisotropic-maximality}
For the block family \eqref{eq:block-evaluation-family}, let
$\mathcal E\subset\mathcal V$ be nonempty and bounded, and fix
$\boldsymbol\alpha\in[0,\infty)^k$. There is a Haar-full set
$G\subset\Q_p$ depending only on $\mathcal{E}$ and $\boldsymbol \alpha$, such that, for every $r\in G$, every compact ball
$B\subset\Q_p$ of positive radius, and every $\varepsilon>0$,
\begin{equation*}
 N_\delta^{\boldsymbol\alpha}(\widetilde P_r(\mathcal E))
 \geq\delta^\varepsilon
       \max_{s\in B}N_\delta^{\boldsymbol\alpha}
                           (\widetilde P_s(\mathcal E))
 \qquad(0<\delta<\delta_0),
\end{equation*}
where $\delta_0=\delta_0(r,B,\varepsilon)>0$. The parameter $r$ need not lie in $B$ and the scale threshold is uniform when the
references lie in a fixed compact ball.
\end{theorem}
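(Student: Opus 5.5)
The plan is to discretize the problem and reduce it to a finite-ring Kakeya-type covering comparison, using Dhar's prime-power Kakeya set estimate \cite[Theorem~1.8]{DharSet} through the block polynomial-to-line lifting of Section~\ref{sec:block-evaluations}. Then a Borel--Cantelli argument over dyadic scales $\delta=p^{-m}$ and over a countable family of reference balls will produce the common Haar-null exceptional set.

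\textbf{Reductions.} Since $\mathcal E$ is bounded, all images $\widetilde P_s(\mathcal E)$ with $s$ in a fixed compact ball lie in a fixed compact set. Covering numbers change by at most bounded factors under rescaling and translation of the parameter. Compact balls in $\Q_p$ form a countable family, and every compact ball $B$ is contained in some ball $p^{-L}\Z_p$. So it suffices to treat, for each $L\in\Z$, the reference ball $B_L=p^{-L}\Z_p$ and produce a Haar-full set $G_L$; then take $G=\bigcap_L G_L$. It also suffices to take $\delta=p^{-m}$ and $\varepsilon=1/j$ with $j\in\mathbb N$, since intermediate $\delta$ change $N_\delta^{\boldsymbol\alpha}$ only by a factor of at most $p^{\sum_i n_i\alpha_i}$, which is absorbed into $\delta^{\varepsilon}$.

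\textbf{Main estimate.} Fix $m$ and $L$, and let $M_m\defeq\max_{s\in B_L}N^{\boldsymbol\alpha}_{p^{-m}}(\widetilde P_s(\mathcal E))$. By continuity of $s\mapsto\widetilde P_s(v)$, uniform over the bounded set $\mathcal E$, the maximum is attained up to a constant factor at some $s_m$. After translating the parameter as in Proposition~\ref{prop:arbitrary-reference-parameter}, we may take $s_m=0$. The claim to prove is
\begin{equation*}
 \bigl|\{r\in p^{-K}\Z_p:\ N^{\boldsymbol\alpha}_{p^{-m}}(\widetilde P_r(\mathcal E))<p^{-m\varepsilon}M_m\}\bigr|\leq C_{K,L,\varepsilon}\,p^{-c\varepsilon m}
\end{equation*}
for each $K$. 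Summing over $m$ and applying Borel--Cantelli then gives, for a.e.\ $r\in p^{-K}\Z_p$, the inequality for all large $m$, with a threshold depending only on $(r,L,\varepsilon)$.

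To prove the claim, reduce modulo the anisotropic lattice: pass to the finite ring $\Z/p^{\lceil m\alpha_i\rceil}$ in block $i$. The lifting sends each coefficient tuple $v$ to a line in the parameter $r$ inside the product of blocks, with direction determined by the leading coefficients. A set of parameters $R$ at which the image is small then gives a union of few tubes covering many points, and the finite-ring Kakeya set estimate (and, if needed, the tube maximal inequality of Lemma~\ref{lem:projection-tube-maximal}) forces $|R|$ to be at most $p^{-c\varepsilon m}$ up to logarithmic losses. This is the same mechanism as the finite-ring covering comparison behind Theorem~\ref{thm:block-valued-projections}, with the exponent $\Lambda$ from \eqref{eq:block-denominator} supplying $c\approx 1/\Lambda$.

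\textbf{Main obstacle.} The hardest step is making the finite-ring comparison work with \emph{different moduli} in different blocks. The degrees $d_i$ and the exponents $\alpha_i$ are unrelated, so the lifted line lives in a module $\prod_i(\Z/p^{m_i})^{n_i\max\{1,d_i\}}$ with mixed exponents, and Dhar's estimate is stated over a single $\Z/p^m$. First I would try to embed by scaling each block coordinate by $p^{m_{\max}-m_i}$ to obtain a single modulus $p^{m_{\max}}$. The point to verify is that the loss from this embedding is polynomial in $\delta$ only through quantities that cancel between the two sides, because both sides use the same $\boldsymbol\alpha$. If that cancellation fails, the fallback is to iterate block by block, conditioning on the coarser blocks.
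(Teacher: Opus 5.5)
Your outline has the same architecture as the paper's proof: a finite-ring count at each scale $p^{-\ell}$ via a block lift and Dhar's set estimate, reduction to a single modulus, then Borel--Cantelli over $\ell$, $\varepsilon=1/j$ and the balls $p^{-A}\Z_p$. The core step, however, does not give the required comparison as you describe it.

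\textbf{The lift must be reversed.} Suppose the lifted lines are parametrized by $r$ and their directions are determined by the leading coefficients. Working in $\mathcal R_\nu$ with $q=p^\nu$, the direction count is then $q^L$ times the number $J$ of distinct leading tuples $(b_{i,h_i})_i$. Dhar's estimate only yields $(\#R/q)^\Lambda\lesssim(1+\nu)^{\Lambda+1}\eta M_{\delta,B}/J$ for your bad set $R$. Translating the parameter does not change leading coefficients, and $J$ can be $1$ while $M_{\delta,B}$ is maximal. For example, with $\widetilde P_r(x,y)=x+ry$ and $\mathcal E=\Z_p\times\{1\}$ the bound is vacuous.

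The missing idea is to reverse the polynomial and work in $t=1/r$, so that the reference value $b_{i,0}$ becomes the first slope block. Take slopes $u=(b_{i,0},c_{i,j})$ with the $c_{i,j}$ free. This gives exactly $K_0q^L$ directions, where $K_0=\#\mathsf P_0(F)$. Let $T$ be the set of bad unit parameters, let $R_t$ be the slice map \eqref{eq:block-slice-map}, and put $X=\{(t,z):t\in T^{-1},\ R_tz\in\operatorname{diag}(t^{h_i}I_{n_i})\mathsf P_{1/t}(F)\}$. Each line $t\mapsto(t,w+tu)$ meets $X$ in $\#T$ points. Every fiber of $R_t$ has $q^L$ points, so $\#X\leq\eta K_0q^L\#T$. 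Combining this with Dhar's lower bound gives $(\#T/q)^\Lambda\leq C(1+\nu)^{\Lambda+1}\eta$. So the relevant configuration is a small set containing many long segments in many directions, and the tube maximal inequality is not needed.

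\textbf{Non-unit parameters and uniformity.} Inversion is available only for units of $\mathcal R_\nu$. The parameters $r=p^au$ with $u$ a unit are handled by passing to $F^{(a)}=\{(p^{aj}b_{i,j})_{i,j}:b\in F\}$. This set satisfies $\mathsf P_u(F^{(a)})=\mathsf P_{p^au}(F)$ and $\mathsf P_0(F^{(a)})=\mathsf P_0(F)$, and one sums the unit-parameter bounds with weights $p^{-a}$. This step, like your scale-dependent maximizer $s_m$, works only because the finite-ring constant is independent of the coefficient set. You should make that uniformity explicit.

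\textbf{The mixed moduli are not an obstacle.} Write $B=s_0+c\Z_p$, $r=s_0+cu$, $m_i=\lceil\alpha_i\ell\rceil$ and $m_*=\max\{1,m_1,\ldots,m_k\}$. First apply a fixed factor $p^\kappa$ making the re-centered coefficients integral. Then multiply block $i$, outputs and coefficients alike, by $p^{\kappa+m_*-m_i}$. This maps $\prod_ip^{m_i}\Z_p^{n_i}$ onto $p^\nu\Z_p^N$ with $\nu=\kappa+m_*$, so $N_\delta^{\boldsymbol\alpha}(\widetilde P_{s_0+cu}(\mathcal E))=\#\mathsf P_{\bar u}(F)$ exactly. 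The free slope coordinates still range over all of $\mathcal R_\nu^L$, so $q^L$ cancels between the two counts, and no nondegeneracy of $F$ is required. Hence there is no loss to cancel and no need for block-by-block iteration. The only cost is $\nu=O(1+\ell)$ inside the logarithmic factor.
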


We deduce this theorem from a finite-scale estimate, rather than
from the preceding Hausdorff dimension statements. More precisely,
let $\mu_B$ denote normalized Haar measure on $B$, let
$\delta=p^{-\ell}$, and put
$M_{\delta,B}=\max_{s\in B}N_\delta^{\boldsymbol\alpha}
(\widetilde P_s(\mathcal E))$. Theorem~\ref{thm:block-anisotropic-covering}
states that
\begin{equation*}
 \mu_B\{r\in B:
 N_\delta^{\boldsymbol\alpha}(\widetilde P_r(\mathcal E))
                   <\eta M_{\delta,B}\}
 \leq C(1+\ell)^{(\Lambda+1)/\Lambda}\eta^{1/\Lambda},
 \qquad 0<\eta<1.
\end{equation*}
For fixed family data, $B$, and $\boldsymbol\alpha$, one constant
$C$ works for every coefficient set in a fixed bounded box.
More generally, only the actual coefficients $a_{i,j}v_{i,j}$ need
be bounded. No measurability assumption is required.
In particular, a lower bound at one reference parameter gives
an almost-everywhere lower bound at the same block scales.
For example, suppose that
$N_\delta^{\boldsymbol\alpha}(\widetilde P_{r_0}(\mathcal E))
\geq\delta^{-s}$ for all sufficiently small $\delta$. The theorem
then gives the lower bound $\delta^{-s+\varepsilon}$ at almost
every parameter. When $\delta=e^{-t}$, the loss is
$\delta^\varepsilon=e^{-\varepsilon t}$. Since $\varepsilon$ is
arbitrary, the loss in the exponential covering-growth rate can be
made arbitrarily small.

Related anisotropic covering estimates were established by
B\'enard and He \cite[Theorem~2.1 and Section~4.3]{BenardHe}
in their work on multislicing and effective equidistribution.
Their multislicing theorem gives lower bounds for coverings by
nonlinear rectangles with unequal side lengths under
non-concentration hypotheses on the underlying set and the
associated flag distribution. These estimates yield dimension
increments for random walks on homogeneous spaces. Our result
addresses a complementary question over $\Q_p$: for arbitrary
fixed block scales and bounded joint coefficient sets, almost
every polynomial evaluation preserves, up to a factor
$\delta^\epsilon$, the largest reference-image covering number
on each compact parameter ball. Thus our conclusion
concerns preservation of existing covering complexity, rather
than the strict dimension improvement obtained through multislicing.

\subsection*{Proof strategy and organization}
We briefly describe the proofs. The algebraic construction is
Nadjimzadah's lifting map
\cite[Proposition~2.5(ii), proof in Section~4]{Nadjimzadah}.
After omitting the parameter from the output, the map is
\begin{equation*}
 \Phi(x_0,\ldots,x_{d-1},z)
       =\sum_{j=0}^{d-1}z^jx_j.
\end{equation*}
A degree-$d$ polynomial with prescribed leading vector is the image
under $\Phi$ of an affine line with $d-1$ freely chosen slope blocks.
A triangular polynomial change of coordinates also identifies
$\Phi^{-1}(E)$ with $E\times\Q_p^{(d-1)n+1}$. Nadjimzadah uses the
same telescoping construction over $\mathbb R$ for smooth
H\"ormander-type families. We use its algebraic identity over $\Q_p$
for arbitrary leading-direction sets and joint coefficient sets.

To prove the curved Kakeya theorem, we first use unit
reparametrizations to obtain a set of leading vectors with dimension
one greater than that of the prescribed directions. We apply the
restricted-direction line theorem to the lift and use the product
identity to remove the auxiliary coordinates.

For total incidence, we reverse the evaluation polynomial so that
the reference value becomes its leading coefficient. Adjoining a
scalar monomial allows us to apply the curved Kakeya theorem.
For several output blocks, we first group the coefficients of equal
powers. Translation then gives the result for arbitrary references.

To estimate individual images, we use the fact that evaluations
are slices of an incidence set. A related description appears in
K\"aenm\"aki--Orponen--Venieri \cite[Section~1, equation~(1.3)]{KOV}, who identify
projections in a non-degenerate circular family with vertical sections
of a union of curves and control concentration through tangency
incidences. In our proof, we lift the curves to lines and apply the
projection--incidence theorem. The slope set and each slice at a
nonzero parameter contain the same full auxiliary factor. We
compute its dimension and subtract it from the slice bounds.
For the block family, this is done separately in each block.

Finally, for covering numbers we carry out the same fiber and
direction count over a finite ring. We multiply the output blocks
by suitable powers of $p$ so that all covering scales correspond
to one residue modulus. These multiplications are contractions
after a fixed initial normalization. The resulting estimate is
uniform in the coefficient set, so a maximizing reference may be
chosen at each scale. Applying Borel--Cantelli then gives one
exceptional null set.

The article is organized as follows. Section~\ref{sec:lines} proves
the restricted-direction line theorem and the product dimension
identity. Section~\ref{sec:lifting} constructs the lift and proves
Theorem~\ref{thm:padic}, followed by sharpness and variants over finite
extensions. Section~\ref{sec:measure-theoretic-input}
then develops the measure-theoretic tools, derives the tube maximal
inequality from Dhar's set estimate, and proves the
projection--incidence theorem.
Sections~\ref{sec:polynomial-projections} and~\ref{sec:block-evaluations}
apply these results first to a single output block and then to joint
block families, including total incidence and arbitrary-reference
comparisons. Finally, Section~\ref{sec:anisotropic-covering} gives the
expansion interpretation, proves the uniform finite-ring and
anisotropic estimates, and deduces
Theorem~\ref{thm:intro-anisotropic-maximality}.

\subsection*{AI Disclosure}
The author used ChatGPT (OpenAI) for proofreading and bibliographic
searches during the preparation of this manuscript. The author also used ChatGPT for mathematical discussions, the exploration of proof strategies for the
projection--incidence and exceptional parameters theorem
(Theorem~\ref{thm:quantitative-line-slicing}) and generated
some of the examples appearing in the remarks. The author takes full
responsibility for all mathematical statements and proofs, the
examples, the accuracy of the references, and the final text.

\medskip
\section{Finite-ring and geometric preliminaries}
\label{sec:lines}
In this section, we prove the line-segment estimate needed for
Theorem~\ref{thm:padic}. We first state Dhar's finite-ring estimate
and deduce its restricted-direction form. We then pass from residue
rings to $\Q_p$. Finally, we prove a product dimension identity
that will be used to remove the auxiliary coordinates in the lift.
Dhar's set estimate will also be applied in
Section~\ref{sec:anisotropic-covering}. The tube maximal estimate
needed for individual slices is derived from the same set estimate
in Section~\ref{sec:measure-theoretic-input}.

\subsection*{Metrics and the finite-ring input}
For $x\in\Q_p^m$ and $\rho>0$, write
$B(x,\rho)=\{y\in\Q_p^m:\|y-x\|\leq\rho\}$ for the closed ball.
Haar measure $\mathcal L^m$ is normalized by
$\mathcal L^m(\Z_p^m)=1$, and integrals with respect to $dx$ use
this measure. We write $\HH^s$ for $s$-dimensional Hausdorff
measure and $\#$ for cardinality.

For nonzero $u,v\in\Q_p^m$, the projective metric is
\begin{equation*}
 \rho([u],[v])\defeq
 \frac{\max_{i,j}|u_iv_j-u_jv_i|_p}{\|u\|\|v\|}.
\end{equation*}
On a chart with representatives in $\Z_p^m$ and one fixed
coordinate equal to $1$, this is the supremum metric on the
remaining coordinates. In these normalizations,
$\dimH\Q_p^m=m$ and $\dimH\PP^{m-1}(\Q_p)=m-1$.

For $\ell\geq1$, put $\mathcal R_\ell\defeq \Z/p^\ell\Z$.
A vector in $\mathcal R_\ell^m$ is called \emph{primitive} if it has a unit
coordinate. The space $\PP^{m-1}(\mathcal R_\ell)$ consists of primitive
vectors modulo multiplication by $\mathcal R_\ell^\times$. A line of
direction $[u]$ is $a+\mathcal R_\ell u$.
Since $u$ has a unit coordinate, the parametrization
$t\mapsto a+tu$ is injective. Counting the primitive vectors and
then dividing by the number of units, we obtain
\begin{equation}\label{eq:direction-count}
 \#\PP^{m-1}(\mathcal R_\ell)
 =\frac{p^{\ell m}-p^{(\ell-1)m}}{p^\ell-p^{\ell-1}}
 \asymp_{p,m}p^{\ell(m-1)}.
\end{equation}

\begin{theorem}[Dhar's prime-power set estimate]
\label{thm:dhar}
Let $m,\ell\geq1$, $1\leq h\leq p^\ell$, and
$0<\varepsilon\leq1$. Suppose that $A\subset\mathcal R_\ell^m$
meets a line in at least $h$ points in each of at least
$\varepsilon\,\#\PP^{m-1}(\mathcal R_\ell)$ projective directions. Then
\begin{equation}\label{eq:dhar-set}
 \#A\geq
 \frac{\varepsilon\,h^m}
 {[2(\ell+\lceil\log_p m\rceil)]^m}.
\end{equation}
This is \cite[Theorem~1.8]{DharSet}, with the residue exponent denoted
by $\ell$.
\end{theorem}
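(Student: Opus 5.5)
The statement is quoted directly from \cite[Theorem~1.8]{DharSet}, so in this paper the proof consists of a careful citation. I would begin by checking that the hypotheses and normalizations used here agree with Dhar's. Her residue exponent is renamed $\ell$. The directions are counted in $\PP^{m-1}(\mathcal R_\ell)$, that is, as primitive vectors modulo units, which is the same set whose size is computed in \eqref{eq:direction-count}. A ``line'' is $a+\mathcal R_\ell u$ with $u$ primitive, so it has exactly $p^\ell$ points; this is why the constraint $1\leq h\leq p^\ell$ is the natural one. Once these identifications are made, \eqref{eq:dhar-set} is word for word Dhar's conclusion, and nothing further is needed.

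If one wanted to reprove the estimate rather than cite it, I would follow the polynomial-method strategy of Dhar--Dvir for $\Z/N\Z$, adapted to prime powers.

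\begin{enumerate}
\item Let $D$ be comparable to $h/(2(\ell+\lceil\log_p m\rceil))$. Consider the evaluation map from polynomials in $m$ variables of degree at most $D$ to functions on $A$.
\item If $\#A$ were smaller than the right-hand side of \eqref{eq:dhar-set}, the dimension count (suitably interpreted over $\mathcal R_\ell$) would produce a nonzero polynomial $f$ vanishing on $A$.
\item For each good direction $u$, restrict $f$ to the line $a+tu$. This gives a univariate polynomial vanishing at $h$ points of $\mathcal R_\ell$.
\item Deduce that the homogeneous top-degree part of $f$ vanishes at $u$, or at least has high $p$-adic valuation there.
\item Since this holds on an $\varepsilon$-fraction of $\PP^{m-1}(\mathcal R_\ell)$, a Schwartz--Zippel type count over $\mathcal R_\ell$ then contradicts $f\neq0$.
\end{enumerate}

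The main obstacle is step 4. Over $\mathcal R_\ell$, a polynomial of degree below $h$ can vanish at $h$ points without being zero, because of zero divisors. Overcoming this requires one of two devices. The first is to pass to a subset of the $h$ points whose pairwise differences have controlled valuation, giving Vandermonde-type invertibility. The second is to work with polynomials in the $p$-adic digit expansion, that is, to lift to $\Z_p$ and use multiplicities. Either route costs the factor $[2(\ell+\lceil\log_p m\rceil)]^m$.

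For the purposes of this article, I would not redo this argument. I would cite \cite[Theorem~1.8]{DharSet} after the normalization check above.
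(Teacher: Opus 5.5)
Your proposal matches the paper: the paper gives no proof of this statement and simply imports \cite[Theorem~1.8]{DharSet}, renaming the residue exponent to $\ell$ and counting directions in $\PP^{m-1}(\mathcal R_\ell)$ as in \eqref{eq:direction-count}, which is exactly your normalization-check-and-cite route. Your polynomial-method outline is not needed here, and it would not work as written: over $\mathcal R_\ell$ a nonzero vanishing polynomial can be divisible by a high power of $p$, and Schwartz--Zippel fails; you were right to set it aside.
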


We shall use the following consequence, in which the hypothesis
is stated in terms of the number of directions.

\begin{corollary}[Finite-scale restricted-direction estimate]
\label{cor:dhar-directions}
Let $m,\ell\geq1$ and $1\leq h\leq p^\ell$. Suppose that
$A\subset\mathcal R_\ell^m$ meets a line in at least
$h$ points in each of $J$ distinct projective directions.
Then
\begin{equation}\label{eq:dhar-J}
 \#A\geq c_{p,m}(\ell+1)^{-m}
 \frac{Jh^m}{p^{\ell(m-1)}}.
\end{equation}
\end{corollary}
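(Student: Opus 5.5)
The plan is to apply Theorem~\ref{thm:dhar} with $\varepsilon\defeq J/\#\PP^{m-1}(\mathcal R_\ell)$ and then convert the resulting constants using the direction count \eqref{eq:direction-count}. The $J$ directions are distinct elements of $\PP^{m-1}(\mathcal R_\ell)$. Hence $1\leq J\leq\#\PP^{m-1}(\mathcal R_\ell)$, so $0<\varepsilon\leq1$. By construction, $A$ meets a line in at least $h$ points in each of at least $\varepsilon\,\#\PP^{m-1}(\mathcal R_\ell)=J$ directions, which is exactly the hypothesis of Theorem~\ref{thm:dhar}. We may assume $J\geq1$, since for $J=0$ the bound \eqref{eq:dhar-J} is trivial.

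Theorem~\ref{thm:dhar} then gives
\begin{equation*}
 \#A\geq\frac{J}{\#\PP^{m-1}(\mathcal R_\ell)}\cdot
 \frac{h^m}{[2(\ell+\lceil\log_p m\rceil)]^m}.
\end{equation*}
Two estimates remain. First, by \eqref{eq:direction-count},
\begin{equation*}
 \#\PP^{m-1}(\mathcal R_\ell)=p^{\ell(m-1)}\cdot\frac{1-p^{-m}}{1-p^{-1}}\leq\frac{p^{\ell(m-1)}}{1-p^{-1}}\leq 2p^{\ell(m-1)}.
\end{equation*}
Second, set $c_m\defeq\lceil\log_p m\rceil\leq m$. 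Since $\ell\geq1$, we have $\ell+c_m\leq(1+c_m)(\ell+1)$. Therefore
\begin{equation*}
 [2(\ell+c_m)]^m\leq[2(1+m)]^m(\ell+1)^m.
\end{equation*}
Combining the two estimates yields \eqref{eq:dhar-J} with $c_{p,m}\defeq\tfrac12[2(1+m)]^{-m}$. This constant in fact depends only on $m$.

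There is no real obstacle here. The only point needing care is that $\varepsilon$ lies in $(0,1]$, which follows from the distinctness of the $J$ directions. The bound $\ell+\lceil\log_p m\rceil\lesssim_m \ell+1$ is where the restriction $\ell\geq1$ is used.
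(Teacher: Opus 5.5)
Your proposal is correct and follows the paper's proof: apply Theorem~\ref{thm:dhar} with $\varepsilon=J/\#\PP^{m-1}(\mathcal R_\ell)$ and substitute \eqref{eq:direction-count}, with your explicit constant-tracking filling in details the paper leaves implicit. As a minor aside, the inequality $\ell+\lceil\log_p m\rceil\leq(1+\lceil\log_p m\rceil)(\ell+1)$ holds for every $\ell\geq0$, so that step does not actually use $\ell\geq1$.
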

\begin{proof}
If $J=0$, the inequality is immediate. Suppose that $J>0$.
Apply \eqref{eq:dhar-set} with
$\varepsilon=J/\#\PP^{m-1}(\mathcal R_\ell)$ and substitute
\eqref{eq:direction-count}. This gives the stated bound.
\end{proof}

\subsection*{From residue rings to line segments}
We now deduce a Hausdorff dimension estimate from the preceding
corollary. The proof is inspired by Arsovski's proof of the $p$-adic Kakeya conjecture \cite{Arsovski}.

\begin{theorem}[Restricted-direction line-segment estimate over $\Q_p$]
\label{thm:Qp-lines}
Let $m\geq1$ and let $D\subset\PP^{m-1}(\Q_p)$ be nonempty.
Suppose that for every $\theta\in D$ there are $a_\theta\in\Q_p^m$ and
$v_\theta\in\Q_p^m\setminus\{0\}$ with $[v_\theta]=\theta$ such that
\begin{equation*}
 a_\theta+\Z_pv_\theta\subset A\subset\Q_p^m.
\end{equation*}
Then
\begin{equation}\label{eq:Qp-lines}
 \dimH A\geq\dimH D+1.
\end{equation}
\end{theorem}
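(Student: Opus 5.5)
We prove the estimate by contradiction, working at one scale at a time. We transfer a small covering of $A$ to a set in a residue ring $\mathcal R_\ell^m$ and compare it with Corollary~\ref{cor:dhar-directions}.

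First we reduce to a bounded, normalized configuration. Write $D=\bigcup_{i}D_i$, where $D_i$ consists of directions having a representative in $\Z_p^m$ with coordinate $i$ equal to $1$. Each $D_i$ lies in a standard chart, on which the projective metric is the supremum metric. Then split further by the size of $v_\theta$ and by $a_\theta$ into countably many pieces. On each piece we may assume $v_\theta$ is primitive in $\Z_p^m$ and $a_\theta\in p^{-K}\Z_p^m$ for a fixed $K$; replacing $v_\theta$ by a $p$-power multiple $p^jv_\theta$ only shrinks the segment. Countable stability of Hausdorff dimension means it suffices to treat one piece $D'$ with $\dimH D'>\dimH D-\epsilon$. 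After scaling by $p^K$, we may take all $a_\theta\in\Z_p^m$ and $v_\theta$ primitive, and then $A\cap\Z_p^m\supset a_\theta+\Z_pv_\theta$.

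Next comes the discretization. Suppose $\dimH A<s<\dimH D'+1$, and fix $t$ with $s-1<t<\dimH D'$, so that $\HH^t_\infty(D')>0$. Cover $A\cap\Z_p^m$ by balls $B_k$ of radii $p^{-\ell_k}$ with $\sum_k p^{-\ell_k s}<\eta$, where $\eta$ is small. Let $A_\ell$ be the union of the balls with $\ell_k=\ell$; then $\#(A_\ell \bmod p^\ell)\le\eta p^{\ell s}$.

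Each segment $a_\theta+\Z_pv_\theta$ has Haar measure $1$ along its parameter, so by pigeonhole there is some $\ell=\ell(\theta)$ for which the parameters $u\in\Z_p$ with $a_\theta+uv_\theta\in A_\ell$ have measure at least $c/\ell^2$. Passing to residues mod $p^\ell$, this means the line $a_\theta+\mathcal R_\ell v_\theta$ meets $A_\ell \bmod p^\ell$ in at least $h\gtrsim p^\ell/\ell^2$ points. Write $D'_\ell=\{\theta:\ell(\theta)=\ell\}$. Since $\sum_\ell \HH^t_\infty(D'_\ell)\ge\HH^t_\infty(D')>0$ and $\sum_\ell \ell^{-2}<\infty$, some $\ell$, which can be taken large by choosing $\eta$ small, satisfies $\HH^t_\infty(D'_\ell)\gtrsim\ell^{-2}$. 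Counting $p^{-\ell}$-separated directions, $D'_\ell$ then yields $J\gtrsim \ell^{-2}p^{\ell t}$ distinct directions in $\PP^{m-1}(\mathcal R_\ell)$, because primitive vectors with distinct classes mod $p^\ell$ are $p^{-\ell}$-separated in the projective metric on a chart.

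Corollary~\ref{cor:dhar-directions} now gives
\begin{equation*}
 \#(A_\ell\bmod p^\ell)\gtrsim(\ell+1)^{-m}\,\ell^{-2}p^{\ell t}\,(p^\ell/\ell^2)^m\,p^{-\ell(m-1)}=p^{\ell(t+1)}\ell^{-O(m)}.
\end{equation*}
Combined with the upper bound $\eta p^{\ell s}$ and $t+1>s$, this is a contradiction for small $\eta$, since then the admissible $\ell$ are large. Letting $\epsilon\to0$ yields \eqref{eq:Qp-lines}.

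The main obstacle is the pigeonholing over scales. The multi-scale cover has to be converted into a single scale $\ell$ at which both many directions survive (with $\HH^t_\infty$ mass only polylogarithmically small) and each line is rich (with $h\approx p^\ell$ up to a polylogarithmic loss). The summable weights $\ell^{-2}$ and the polylogarithmic loss in Corollary~\ref{cor:dhar-directions} are exactly what make this work. We also need to check that distinct $p^{-\ell}$-separated directions give distinct classes in $\PP^{m-1}(\mathcal R_\ell)$.
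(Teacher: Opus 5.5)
Your argument is correct and uses the same core ingredients as the paper's proof. For each direction you pigeonhole the parameter measure of its segment over the scales of the cover with summable weights $\sim\ell^{-2}$ (the paper's $\beta_\ell$). You then reduce the rich segments modulo $p^\ell$ and apply Corollary~\ref{cor:dhar-directions}.

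The difference is in how the dimension bound is extracted. The paper argues directly. At every scale Dhar's bound gives $J_\ell\le C_{p,m}(\ell+1)^{3m}N_\ell p^{-\ell}$. Since the fibres of $\red_\ell$ are projective balls of radius $p^{-\ell}$, the sets $D_\ell$ for $\ell\ge\ell_0$ together yield a cover of $D$ of $(\alpha-1+\eta)$-cost $\lesssim\varepsilon_0$. Hence $\HH^{\alpha-1+\eta}(D)=0$, with no contradiction and no second pigeonhole.

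You instead argue by contradiction from $\HH^t_\infty(D')>0$. You pigeonhole a second time to isolate one scale where $D'_\ell$ carries content $\gtrsim\ell^{-2}$, and then convert that content into a lower bound on the number of residue directions. The two are dual: the paper's cost $\sum_\ell J_\ell p^{-\ell\xi}$ is exactly what your contradiction bounds from below. Your single-scale version is the standard discretization. The paper's is shorter and gives a direct covering estimate for $D$.

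The normalizations also differ. The paper normalizes at the end by packing all rescaled, translated pieces of $A$ into one set $A_*\subset\Z_p^m$. You split $D$ and use countable stability on the direction side. Both work.

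Three small repairs are needed.

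\emph{Normalization.} As written, it is inconsistent. After ``scaling by $p^K$'' one has $\|p^Kv_\theta\|=p^{-K}$, so $v_\theta$ is no longer primitive. Your residue step needs a primitive direction vector, both for the injective parametrization modulo $p^\ell$ and for the line to have direction $\red_\ell\theta$. Also, replacing $v_\theta$ by $p^jv_\theta$ shrinks the segment only for $j\ge0$, so short segments cannot be made primitive this way. The fix: on a piece where $\|v_\theta\|=p^h$ is fixed, rescale $A$ by the similarity $x\mapsto p^hx$. The segment then lies in the single coset $a_\theta+\Z_p^m$, so split further by that coset and translate it into $\Z_p^m$; no scaling by $p^K$ is needed.

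\emph{Content bound.} The inequality $\HH^t_\infty(D'_\ell)\le J\,p^{-\ell t}$ uses that equal reductions modulo $p^\ell$ force projective distance $\le p^{-\ell}$. This is the direction in your last sentence, not the converse stated in your parenthetical.

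\emph{Trivial case.} Treat $\dimH D=0$ separately; it is trivial because $A$ contains a segment. Otherwise your choice $s-1<t<\dimH D'$ would force $t<0$.
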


\begin{proof}
We first prove the theorem when $A\subset\Z_p^m$ and the
prescribed segments have unit length. Write these segments as
\begin{equation*}
 I_\theta\defeq \{a_\theta+tu_\theta:t\in\Z_p\}\subset A,
 \quad a_\theta\in\Z_p^m,\quad\norm{u_\theta}=1,\quad[u_\theta]=\theta, \quad \theta\in D.
\end{equation*}
Fix $\alpha>\dimH A$, an integer $\ell_0\geq1$, and $\varepsilon_0>0$.
Choose a cover of $A$ by balls of radii $p^{-\ell}$, $\ell\geq \ell_0$, satisfying
\begin{equation*}
 \sum_{\ell\geq \ell_0}N_\ell p^{-\ell\alpha}<\varepsilon_0,
\end{equation*}
where $N_\ell$ is the number of distinct balls at scale $p^{-\ell}$. Let $B_\ell$ be their
union, so $A\subset\bigcup_{\ell\geq \ell_0}B_\ell$.

For each $\theta\in D$, define
\begin{equation*}
 T_{\theta,\ell}\defeq \{t\in\Z_p:a_\theta+tu_\theta\in B_\ell\},\qquad
 \beta_\ell\defeq \frac{1}{2\ell(\ell+1)}.
\end{equation*}
Observe that $T_{\theta,\ell}$ is a finite union of parameter
residue classes. Since the sets $B_\ell$ cover $I_\theta$, we have
\begin{equation*}
 1\leq\sum_{\ell\geq \ell_0}\mathcal L^1(T_{\theta,\ell}),\qquad
 \sum_{\ell\geq1}\beta_\ell=\frac12.
\end{equation*}
Consequently, every direction belongs to at least one of the sets
\begin{equation*}
 D_\ell\defeq \{\theta\in D:\mathcal L^1(T_{\theta,\ell})\geq\beta_\ell\},\qquad
 D=\bigcup_{\ell\geq \ell_0}D_\ell.
\end{equation*}

Write $\bar x$ for the reduction modulo $p^\ell$ of an integral
vector $x\in\Z_p^m$. For projective directions, write
$\red_\ell$ for reduction using a primitive integral representative.
This is independent of the representative, since any two such
representatives differ by a unit. Put
\begin{equation*}
 S_\ell\defeq B_\ell\bmod p^\ell\subset \mathcal R_\ell^m,
 \qquad J_\ell\defeq \#\red_\ell(D_\ell).
\end{equation*}
By definition, $\#S_\ell=N_\ell$. For each $\theta\in D_\ell$,
consider the line over $\mathcal R_\ell$ given by
\begin{equation*}
 L_{\theta,\ell}\defeq \{\overline{a_\theta}+\bar t\,\overline{u_\theta}:\bar t\in \mathcal R_\ell\}.
\end{equation*}
The parameter map is injective because $u_\theta$ has a unit
coordinate. Moreover, membership in $B_\ell$ depends only on
$t\bmod p^\ell$. Each such parameter class has measure
$p^{-\ell}$, and hence
\begin{equation*}
 \#(S_\ell\cap L_{\theta,\ell})=p^\ell\mathcal L^1(T_{\theta,\ell})\geq\beta_\ell p^\ell.
\end{equation*}
Choose one direction in $D_\ell$ for each distinct reduction.
Applying \eqref{eq:dhar-J} with
$h=\lceil\beta_\ell p^\ell\rceil$, we obtain
\begin{align*}
 N_\ell
 &\geq c_{p,m}(\ell+1)^{-m}
 \frac{J_\ell(\beta_\ell p^\ell)^m}{p^{\ell(m-1)}}\notag\\
 &\geq c'_{p,m}(\ell+1)^{-3m}J_\ell p^\ell.
\end{align*}
Therefore
\begin{equation}\label{eq:direction-cover-count}
 J_\ell\leq C_{p,m}(\ell+1)^{3m}N_\ell p^{-\ell}.
\end{equation}

We next turn this count into a cover of the direction set.
Recall that the reduction map
\begin{equation*}
 \red_\ell:\PP^{m-1}(\Q_p)\longrightarrow\PP^{m-1}(\mathcal R_\ell)
\end{equation*}
is defined using any primitive integral representative. Its fibers are
projective balls of radius $p^{-\ell}$. Indeed, in a fiber one can choose a common
unit coordinate and normalize it to $1$. Equality of reductions then means
that all remaining coordinates agree modulo $p^\ell$. Thus $D_\ell$ is covered by
$J_\ell$ such balls, and their union over $\ell$ covers $D$.

Fix $\eta>0$, and write $\xi\defeq \alpha-1+\eta$. Because $A$ contains a unit segment,
$\dimH A\geq1$, so $\xi>0$. By \eqref{eq:direction-cover-count}, the
$\xi$-dimensional cost of the direction cover is at most
\begin{align*}
 \sum_{\ell\geq \ell_0}J_\ell p^{-\ell\xi}
 &\leq C_{p,m}\sum_{\ell\geq \ell_0}
 (\ell+1)^{3m}N_\ell p^{-\ell(\xi+1)}\notag\\
 &=C_{p,m}\sum_{\ell\geq \ell_0}
 (\ell+1)^{3m}p^{-\ell\eta}N_\ell p^{-\ell\alpha}\notag\\
 &\leq C_{p,m,\eta}\sum_{\ell\geq \ell_0}N_\ell p^{-\ell\alpha}
 <C_{p,m,\eta}\varepsilon_0.
\end{align*}
The last inequality uses
$\sup_{\ell\geq1}(\ell+1)^{3m}p^{-\ell\eta}<\infty$.
Since $\varepsilon_0$ and $p^{-\ell_0}$ may be arbitrarily small,
it follows that $\HH^{\alpha-1+\eta}(D)=0$.
Letting $\alpha\downarrow\dimH A$ and then $\eta\downarrow0$
proves the assertion for unit segments in $\Z_p^m$.

It remains to remove the restrictions on the lengths and locations
of the segments. Partition $\Q_p^m$ into its countably many
unit-ball cosets $c_\nu+\Z_p^m$, and define
\begin{equation*}
 A_*\defeq
 \bigcup_{h\in\Z}\ \bigcup_\nu
 \bigl((p^hA\cap(c_\nu+\Z_p^m))-c_\nu\bigr)\subset\Z_p^m.
\end{equation*}
Each member of this countable union is a translated subset of a similarity
image of $A$, so $\dimH A_*\leq\dimH A$.
For a prescribed segment $a_\theta+\Z_pv_\theta$, choose $h\in\Z$ such that
$\norm{v_\theta}=p^h$. Then $\norm{p^hv_\theta}=1$, and the segment
$p^ha_\theta+\Z_p(p^hv_\theta)$ lies in one unit-ball coset.
After translation, this is a unit segment in $A_*$ with direction
$\theta$. Applying the result already proved, we obtain
\begin{equation*}
 \dimH A\geq\dimH A_*\geq\dimH D+1.
\end{equation*}
This completes the proof.
\end{proof}

\subsection*{Products with full-dimensional factors}
In the lifting arguments, the direction sets and fibers contain
full $\Q_p$ factors. We shall use the following lemma to calculate
the contribution of these factors.

\begin{lemma}[Full $\Q_p$ factors]
\label{lem:products}
For every nonempty $X\subset \Q_p^a$ and every integer $b\geq0$,
\begin{equation}\label{eq:product-dimension}
 \dimH(X\times \Q_p^b)=\dimH X+b.
\end{equation}
The same equality holds with the second factor replaced by any nonempty open
subset of $\Q_p^b$.
\end{lemma}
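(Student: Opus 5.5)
The plan is to prove the two inequalities $\dimH(X\times\Q_p^b)\le\dimH X+b$ and $\geq$ separately. We first reduce to the compact factor $\Z_p^b$. By countable stability, $\Q_p^b$ is a countable union of translates of $\Z_p^b$, and translation in the second factor is an isometry, so $\dimH(X\times\Q_p^b)=\dimH(X\times\Z_p^b)$. Every nonempty open $U\subset\Q_p^b$ contains a ball $c+p^k\Z_p^b$, which is similar to $\Z_p^b$. It is also contained in $\Q_p^b$. Hence monotonicity sandwiches $\dimH(X\times U)$ between $\dimH(X\times p^k\Z_p^b)=\dimH(X\times\Z_p^b)$ and $\dimH(X\times\Q_p^b)$. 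Here we use that $(x,y)\mapsto(x,p^ky)$ is bi-Lipschitz. So it suffices to treat $X\times\Z_p^b$, and we may assume $b\geq1$.

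\emph{Upper bound.} Fix $s>\dimH X$ and $\varepsilon_0>0$. Cover $X$ by balls $B(x_i,p^{-\ell_i})$ with $\sum_i p^{-\ell_i s}<\varepsilon_0$. In the ultrametric setting each ball $B(x_i,p^{-\ell_i})\times\Z_p^b$ is a union of exactly $p^{\ell_i b}$ product balls of radius $p^{-\ell_i}$, in the supremum metric. The $(s+b)$-cost of this cover is therefore $\sum_i p^{\ell_i b}p^{-\ell_i(s+b)}<\varepsilon_0$. The radii are at most the original radii, so the cover is fine enough. Hence $\HH^{s+b}(X\times\Z_p^b)=0$, and letting $s\downarrow\dimH X$ gives the upper bound.

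\emph{Lower bound.} This is the main obstacle, because $X$ is arbitrary and need not carry a Frostman measure. I would avoid measures and run a direct covering argument, in the same spirit as the proof of Theorem~\ref{thm:Qp-lines}. Fix $\alpha>\dimH(X\times\Z_p^b)$ and $\varepsilon_0>0$. Choose a cover of $X\times\Z_p^b$ by product balls, with $N_\ell$ balls of radius $p^{-\ell}$ for $\ell\ge\ell_0$, such that $\sum_\ell N_\ell p^{-\ell\alpha}<\varepsilon_0$. Write $B_\ell$ for the union at scale $\ell$ and set $\beta_\ell=1/(2\ell(\ell+1))$. For each $x\in X$, the fiber $\{x\}\times\Z_p^b$ is covered, so $\sum_\ell\mathcal L^b(\{y:(x,y)\in B_\ell\})\geq1$. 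Hence $x$ lies in some set
\begin{equation*}
X_\ell=\{x:\mathcal L^b(\{y:(x,y)\in B_\ell\})\geq\beta_\ell\}.
\end{equation*}
Each ball of radius $p^{-\ell}$ contributes at most $p^{-\ell b}$ to the measure of a fiber. Therefore any point of $X_\ell$ lies in the $x$-projection of at least $\beta_\ell p^{\ell b}$ balls of $B_\ell$ sharing that projection; in the ultrametric setting the projections are balls, either disjoint or equal. Consequently $X_\ell$ is covered by at most $N_\ell/(\beta_\ell p^{\ell b})$ balls of radius $p^{-\ell}$. The $(\alpha-b+\eta)$-cost of this cover of $X$ is at most $\sum_\ell 2\ell(\ell+1)p^{-\ell\eta}N_\ell p^{-\ell\alpha}\le C_\eta\varepsilon_0$. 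Therefore $\HH^{\alpha-b+\eta}(X)=0$, so $\dimH X\le\alpha-b+\eta$. Letting $\alpha\downarrow\dimH(X\times\Z_p^b)$ and then $\eta\downarrow0$ gives the lower bound.

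The only delicate point is the counting step. Balls in the chosen cover with equal $x$-projections partition the fiber measure, and for this we use that the cover may be taken with distinct balls at each scale. Once this is checked, the rest is routine.
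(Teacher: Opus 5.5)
Your proof is correct. The reduction to $\Z_p^b$ and the upper bound match the paper, but your lower bound takes a different route.

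The paper slices horizontally. If $\{A_i\times C_i\}$ is a ball cover of $X\times\Z_p^b$ with radii $r_i\le\rho$, then for each fixed $y\in\Z_p^b$ the balls $A_i$ with $y\in C_i$ already cover $X$. Hence $\mathcal H^s_\rho(X)\le\sum_i r_i^s\one_{C_i}(y)$, and integrating over $y$ gives $\mathcal H^s_\rho(X)\le\sum_i r_i^{s+b}$. Together with the product cover, this yields the exact identity $\mathcal H^{s+b}_\rho(X\times\Z_p^b)=\mathcal H^s_\rho(X)$ at every level $\rho$. So the paper gets equality of the ball-based Hausdorff measures, not just of dimensions, with no pigeonholing in scales and no $\eta$-loss.

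You instead slice vertically, pigeonhole one scale $\ell$ per point $x$ using the weights $\beta_\ell$, and count projection balls. This is the proof of Theorem~\ref{thm:Qp-lines} with Dhar's estimate replaced by the trivial bound of at least $\beta_\ell p^{\ell b}$ product balls over each projection ball. It is longer and gives only the dimension statement. In exchange, it uses only that the vertical fibers over points of $X$ are covered in measure. So the same argument proves $\dimH Z\geq\dimH X+b$ for any $Z\subset\Q_p^a\times\Z_p^b$ whose fibers over points of $X$ have Haar outer measure bounded below by a fixed positive constant. The horizontal averaging needs every horizontal slice to cover all of $X$, and does not give this without further work.

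Two small remarks. The counting step needs only subadditivity of $\mathcal L^b$, so distinctness of the balls at each scale is not actually required. The exponent $\alpha-b+\eta$ is positive because $\dimH(X\times\Z_p^b)\geq b$; you should say so, as the paper does in the analogous step of Theorem~\ref{thm:Qp-lines}.
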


\begin{proof}
The case $b=0$ is immediate, so we assume $b>0$. By countable stability and bi-Lipschitz invariance of
Hausdorff dimension, it suffices to treat the factor $\Z_p^b$, because 
every nonempty open subset of $\Q_p^b$ is a countable union of
balls, and $X\times B$ is bi-Lipschitz equivalent to
$X\times\Z_p^b$ for every such ball $B$.

For $s\geq0$ and $0<\rho\leq1$, let $\mathcal H_\rho^s$ denote
Hausdorff content computed using balls of positive radii
$r\in p^{\Z}$, $r\leq\rho$, with cost $r^s$.
Given a ball cover $\{B_i\}$ of $X$, with radii $r_i\leq\rho$,
partition $\Z_p^b$ into $r_i^{-b}$ balls of radius $r_i$.
The resulting product cover gives
\begin{equation*}
 \mathcal H_\rho^{s+b}(X\times\Z_p^b)
 \leq \inf_{\{B_i\}}\sum_i r_i^{-b}r_i^{s+b}
 =\mathcal H_\rho^s(X).
\end{equation*}

Conversely, let $\{A_i\times C_i\}$ be a ball cover of
$X\times\Z_p^b$, where both factors have radius $r_i\leq\rho$.
For each $y\in\Z_p^b$, the balls $A_i$ with $y\in C_i$ cover $X$.
Integrating against $\mathcal{L}^b$ yields
\begin{equation*}
 \mathcal H_\rho^s(X)
 \leq \int_{\Z_p^b}\sum_i r_i^s\mathbf 1_{C_i}(y)\,d\mathcal{L}^b(y)
 \leq \sum_i r_i^{s+b}.
\end{equation*}
Taking the infimum over these covers proves the reverse inequality.
Hence
\begin{equation*}
 \mathcal H_\rho^{s+b}(X\times\Z_p^b)
 =\mathcal H_\rho^s(X).
\end{equation*}
Letting $\rho\downarrow0$ and varying $s$ gives
$\dimH(X\times\Z_p^b)=\dimH X+b$; when $\dimH X=0$,
the lower bound follows by taking $s=0$, since $X\neq\varnothing$.
\end{proof}

\medskip
\section{Polynomial-to-line lifting and the curved Kakeya theorem}
\label{sec:lifting}

We devote this section to the proof of Theorem~\ref{thm:padic}.
First, we show that the inverse image of $E$ under the lifting map
contains line segments in a suitable set of directions. Next, we
compute the dimension of this inverse image. We then use unit
reparametrizations to pass from the prescribed leading directions
to leading vectors. The theorem will follow by applying the line
estimate and comparing these dimensions.

\subsection*{The lift and its fibers}
Fix integers $n,d\geq1$. Define
\begin{equation*}
 \Phi:(\Q_p^n)^d\times \Q_p\longrightarrow \Q_p^n,\qquad
 \Phi(x_0,\ldots,x_{d-1},z)\defeq \sum_{j=0}^{d-1}z^jx_j.
\end{equation*}
For $B\subset \Q_p^n$, write $K_B\defeq \Phi^{-1}(B)\subset \Q_p^{dn+1}$.
The following calculation is Nadjimzadah's algebraic construction
\cite[Section~4, equations~(4.11)--(4.16)]{Nadjimzadah}, with
the parameter omitted from the output. We give the calculation
in our notation.

\begin{lemma}[Line segments in the polynomial lift]
\label{lem:lifting}
Suppose that for every $w$ in a set $W\subset\Q_p^n\setminus\{0\}$
there is a polynomial map
\begin{equation*}
 \gamma_w(t)\defeq a_0(w)+a_1(w)t+\cdots+a_{d-1}(w)t^{d-1}+wt^d,
 \qquad \gamma_w(\Z_p)\subset B.
\end{equation*}
Then $K_B$ contains a nondegenerate affine line segment in every direction in
\begin{equation}\label{eq:lift-directions}
 D(W)\defeq \bigl\{
 [u_0:\cdots:u_{d-2}:w:1]:
 u_j\in \Q_p^n,\ w\in W
 \bigr\}\subset\PP^{dn}(\Q_p).
\end{equation}
When $d=1$, the list $u_0,\ldots,u_{d-2}$ is empty.
\end{lemma}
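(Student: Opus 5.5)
The plan is to write down the segment explicitly. We parametrize it by $t\in\Z_p$, take the $z$-coordinate equal to $t$, and let each block $x_j$ depend affinely on $t$. The lower coefficients $a_j(w)$ and the free slopes $u_j$ are then absorbed by a telescoping choice of the constant terms.

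Fix $w\in W$ and arbitrary $u_0,\ldots,u_{d-2}\in\Q_p^n$. For $t\in\Q_p$ we set
\begin{equation*}
 x_j(t)\defeq c_j+tu_j\quad(0\leq j\leq d-2),\qquad
 x_{d-1}(t)\defeq c_{d-1}+tw,\qquad z(t)\defeq t,
\end{equation*}
with constant vectors $c_j\in\Q_p^n$ still to be chosen. Expanding gives
\begin{equation*}
 \Phi(x_0(t),\ldots,x_{d-1}(t),z(t))
 =\sum_{j=0}^{d-1}t^jc_j+\sum_{j=0}^{d-2}t^{j+1}u_j+t^dw
 =c_0+\sum_{j=1}^{d-1}t^j(c_j+u_{j-1})+t^dw .
\end{equation*}
Comparing coefficients with $\gamma_w$, we choose
\begin{equation*}
 c_0\defeq a_0(w),\qquad c_j\defeq a_j(w)-u_{j-1}\quad(1\leq j\leq d-1).
\end{equation*}
With this choice, $\Phi$ applied to the point with parameter $t$ equals $\gamma_w(t)$ identically in $t$. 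When $d=1$ there are no $u_j$, and the construction reduces to $x_0(t)=a_0(w)+tw$ with $\Phi(x_0,z)=x_0$.

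The map $t\mapsto(x_0(t),\ldots,x_{d-1}(t),t)$ is affine, with constant term $(c_0,\ldots,c_{d-1},0)$ and direction vector $(u_0,\ldots,u_{d-2},w,1)$. Its last coordinate equals $1$, so this vector is nonzero. Hence the image of $\Z_p$ is a nondegenerate segment of the form $a+\Z_pv$ with $[v]=[u_0:\cdots:u_{d-2}:w:1]$. Since $\gamma_w(\Z_p)\subset B$, the segment lies in $\Phi^{-1}(B)=K_B$. This holds for every $w\in W$ and every choice of the $u_j$, so every direction in $D(W)$ is realized.

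There is no real obstacle in this argument. The only point needing care is the index bookkeeping in the telescoping step, namely that the slope $u_{j-1}$ shifts into degree $j$. One must also check that the case $d=1$ is covered by the same formulas with the list of $u_j$ empty.
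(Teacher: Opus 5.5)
Your proposal is correct and follows the paper's own proof essentially verbatim: the paper takes the same line $\mathcal L(t)=(b_0+tu_0,\ldots,b_{d-1}+tu_{d-1},t)$ with $u_{d-1}=w$, $b_0=a_0(w)$ and $b_j=a_j(w)-u_{j-1}$, and verifies $\Phi(\mathcal L(t))=\gamma_w(t)$ by the same index shift. The only addition in the paper is a remark that, since the direction vector has norm at least $1$, restricting $t$ to a smaller ball $p^h\Z_p$ yields a unit segment; the lemma as stated does not require this, and your segment $a+\Z_pv$ already has the form used in the line-segment theorem.
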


\begin{proof}
Fix $w\in W$, choose arbitrary $u_0,\ldots,u_{d-2}\in \Q_p^n$, and put
$u_{d-1}\defeq w$. Define
\begin{equation*}
 b_0\defeq a_0(w),\qquad b_j\defeq a_j(w)-u_{j-1}\quad(1\leq j\leq d-1).
\end{equation*}
Consider the line
\begin{equation*}
 \mathcal L(t)\defeq (b_0+tu_0,\ldots,b_{d-1}+tu_{d-1},t),\qquad t\in \Q_p.
\end{equation*}
We claim that it has the required property. Its last coordinate is $t$, so it is
nondegenerate. Substituting into the definition of $\Phi$, we have
\begin{align*}
 \Phi(\mathcal L(t))
 &=\sum_{j=0}^{d-1}t^j(b_j+tu_j)\notag\\
 &=b_0+\sum_{j=1}^{d-1}(b_j+u_{j-1})t^j+u_{d-1}t^d\notag\\
 &=\gamma_w(t).
\end{align*}
Since $\gamma_w(\Z_p)\subset B$, this implies
\begin{equation*}
 \mathcal L(\Z_p)\subset K_B,
 \qquad \mathrm{direction}\,\mathcal L=[u_0:\cdots:u_{d-2}:w:1].
\end{equation*}
The direction vector has norm at least $1$. Thus, by restricting
$t$ to a suitable ball $p^h\Z_p\subset\Z_p$, the segment
contains a unit segment. The vectors $u_0,\ldots,u_{d-2}$ were
arbitrary, so every stated direction occurs. This proves the lemma.
\end{proof}

The final homogeneous coordinate in \eqref{eq:lift-directions} is
fixed to $1$, so distinct tuples determine distinct projective
directions. This parametrization is locally bi-Lipschitz and identifies
the direction set with
\begin{equation*}
 D(W)\simeq \Q_p^{(d-1)n}\times W.
\end{equation*}
We next describe the full inverse image $K_B$. The following
coordinate change identifies it with a product whose first factor
is $B$.

\begin{lemma}[The fibers of the lift]
\label{lem:automorphism}
The polynomial map
\begin{equation}\label{eq:psi}
 \Psi(e,x_1,\ldots,x_{d-1},z)
 \defeq \left(e-\sum_{j=1}^{d-1}z^jx_j,x_1,\ldots,x_{d-1},z\right)
\end{equation}
is an automorphism of $\Q_p^{dn+1}$, with inverse
\begin{equation}\label{eq:psi-inverse}
 \Psi^{-1}(x_0,\ldots,x_{d-1},z)
 =\left(\sum_{j=0}^{d-1}z^jx_j,x_1,\ldots,x_{d-1},z\right).
\end{equation}
It restricts to a bijection
\begin{equation*}
 \Psi:B\times \Q_p^{(d-1)n+1}\longrightarrow K_B.
\end{equation*}
\end{lemma}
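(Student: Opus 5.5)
The plan is a direct verification in three steps: compose $\Psi$ with the map in \eqref{eq:psi-inverse} in both orders, note that both maps are polynomial, and then read off the behaviour of the first coordinate.

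\emph{Inverse.} Write $\Theta$ for the map on the right-hand side of \eqref{eq:psi-inverse}. For $(e,x_1,\ldots,x_{d-1},z)$, the map $\Psi$ changes only the first block, and puts $x_0=e-\sum_{j=1}^{d-1}z^jx_j$ there. The coordinates $x_1,\ldots,x_{d-1},z$ are untouched, so applying $\Theta$ gives first block
\begin{equation*}
 x_0+\sum_{j=1}^{d-1}z^jx_j=e.
\end{equation*}
Hence $\Theta\circ\Psi=\mathrm{id}$. In the other order, $\Theta$ puts $e=\sum_{j=0}^{d-1}z^jx_j$ in the first block. Applying $\Psi$ subtracts $\sum_{j\geq1}z^jx_j$ and returns $x_0$, so $\Psi\circ\Theta=\mathrm{id}$. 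Both maps have polynomial coordinates, so $\Psi$ is a polynomial automorphism of $\Q_p^{dn+1}$ with polynomial inverse $\Theta$. This is a triangular change of variables: the last $(d-1)n+1$ coordinates are fixed, and the first block is shifted by a polynomial in them.

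\emph{Restriction to $K_B$.} The first block of $\Theta(x_0,\ldots,x_{d-1},z)$ is exactly $\Phi(x_0,\ldots,x_{d-1},z)$. Therefore a point $y$ lies in $K_B=\Phi^{-1}(B)$ if and only if $\Theta(y)\in B\times\Q_p^{(d-1)n+1}$. Equivalently, $\Theta(K_B)=B\times\Q_p^{(d-1)n+1}$. Since $\Psi$ and $\Theta$ are mutually inverse bijections of the whole space, $\Psi$ maps $B\times\Q_p^{(d-1)n+1}$ bijectively onto $K_B$.

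When $d=1$ the sums are empty and $\Psi$ is the identity. Then $K_B=B\times\Q_p$, which agrees with the statement. No step is a genuine obstacle; the only care needed is in the index bookkeeping, so that $\Psi$ modifies only the first block and the $j=0$ term of $\Phi$ is the one that gets cancelled.
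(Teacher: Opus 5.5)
Your proof is correct and is essentially the paper's argument: the paper also notes that the fiber equation $\Phi(x_0,x_1,\ldots,x_{d-1},z)=e$ has the unique solution $x_0=e-\sum_{j=1}^{d-1}z^jx_j$, checks the stated inverse by substitution, and deduces the bijection $B\times\Q_p^{(d-1)n+1}\to K_B$. Your verification of both compositions, and your observation that the first block of $\Psi^{-1}$ is exactly $\Phi$, simply spell out those steps in more detail.
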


\begin{proof}
Fix $e,x_1,\ldots,x_{d-1},z$. The equation defining the fiber
has a unique solution for $x_0$, given by \eqref{eq:psi}.
Substitution shows that \eqref{eq:psi-inverse} is its inverse.
The stated bijection follows.
\end{proof}

To pass from this bijection to a dimension identity, it remains
to check that both coordinate maps are Lipschitz on bounded boxes.
We include this verification in the next lemma.

\begin{lemma}[Hausdorff dimension of the polynomial lift]
\label{lem:lift-dimension}
For nonempty $B\subset \Q_p^n$,
\begin{equation}\label{eq:lift-dimension}
 \dimH K_B=\dimH B+(d-1)n+1.
\end{equation}
\end{lemma}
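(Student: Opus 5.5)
We will deduce the identity from the bijection of Lemma~\ref{lem:automorphism} together with the product identity of Lemma~\ref{lem:products}. By Lemma~\ref{lem:products}, applied with $X=B$ and $b=(d-1)n+1$,
\begin{equation*}
 \dimH\bigl(B\times\Q_p^{(d-1)n+1}\bigr)=\dimH B+(d-1)n+1 .
\end{equation*}
So it suffices to show that the bijection $\Psi:B\times\Q_p^{(d-1)n+1}\to K_B$ preserves Hausdorff dimension. The maps $\Psi$ and $\Psi^{-1}$ are polynomial and so are not globally Lipschitz. We will therefore use countable stability of Hausdorff dimension and localize to bounded boxes.

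The localization goes as follows. Cover $\Q_p^{dn+1}$ by the countably many boxes $Q_h=(p^{-h}\Z_p)^{dn+1}$, $h\geq0$. On each $Q_h$, every coordinate of $\Psi$ and of $\Psi^{-1}$ is a polynomial with coefficients in $\Z$. We will use the ultrametric estimate
\begin{equation*}
 |z^jx_j-z'^jx'_j|_p\leq\max\bigl(|z|^j|x_j-x'_j|,\ |x'_j|\,|z^j-z'^j|\bigr),
\end{equation*}
together with $|z^j-z'^j|\leq|z-z'|\max(|z|,|z'|)^{j-1}$. These give Lipschitz constants of order $p^{hd}$ on $Q_h$. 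Hence $\Psi|_{Q_h}$ and $\Psi^{-1}|_{Q_h}$ are Lipschitz.

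We then check that each piece is mapped into a bounded set. The image $\Psi^{-1}(Q_h)$ lies in $Q_{h'}$ for some $h'=h'(h,d)$, since a polynomial maps bounded sets to bounded sets. From this we get
\begin{equation*}
 \Psi^{-1}\bigl(K_B\cap Q_h\bigr)\subset\bigl(B\times\Q_p^{(d-1)n+1}\bigr)\cap Q_{h'} ,
\end{equation*}
and the Lipschitz map $\Psi$ on $Q_{h'}$ carries the right-hand side onto a set containing $K_B\cap Q_h$. Lipschitz maps do not increase dimension, so
\begin{equation*}
 \dimH(K_B\cap Q_h)\leq\dimH\bigl(B\times\Q_p^{(d-1)n+1}\bigr).
\end{equation*}
Taking the supremum over $h$ and using countable stability gives $\leq$ in \eqref{eq:lift-dimension}. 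The symmetric argument with the roles of $\Psi$ and $\Psi^{-1}$ exchanged gives $\geq$.

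The only point needing care is this bookkeeping: that each bounded piece is sent into a bounded piece, so that the local Lipschitz bounds apply. It is routine, because polynomials with integer coefficients send $Q_h$ into some $Q_{h'}$. There is no real obstacle here; the substance of the argument is contained in Lemmas~\ref{lem:products} and~\ref{lem:automorphism}.
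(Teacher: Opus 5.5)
Your proposal is correct and is essentially the paper's argument: $\Psi$ and $\Psi^{-1}$ are Lipschitz on bounded boxes, countable stability transfers the dimension through the bijection of Lemma~\ref{lem:automorphism}, and Lemma~\ref{lem:products} computes the dimension of $B\times\Q_p^{(d-1)n+1}$. The paper obtains the slightly sharper constant $R^{d-1}$ and covers only the domain by boxes, since their images automatically cover $K_B$. Your extra step tracking where boxes are mapped is therefore harmless but not needed.
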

\begin{proof}
By Lemma~\ref{lem:automorphism}, the map \eqref{eq:psi} sends
$B\times \Q_p^{(d-1)n+1}$ onto $K_B$. Suppose that the
coordinates of two inputs have norm at most $R\geq1$, and their
supremum distance is $\delta$. For $j\geq1$, we have
\begin{equation*}
 z^j-w^j=(z-w)\sum_{j'=0}^{j-1}z^{j-1-j'}w^{j'},
 \qquad |z^j-w^j|_p\leq R^{j-1}|z-w|_p.
\end{equation*}
Adding and subtracting $z^jy_j$, then using the ultrametric inequality, yields
\begin{align*}
 \norm{z^jx_j-w^jy_j}
 &=\norm{z^j(x_j-y_j)+(z^j-w^j)y_j}\notag\\
 &\leq\max\bigl\{|z|_p^j\norm{x_j-y_j},
 |z^j-w^j|_p\norm{y_j}\bigr\}
 \leq R^j\delta.
\end{align*}
It follows that $\norm{\Psi(P)-\Psi(Q)}\leq R^{d-1}\norm{P-Q}$ on that box.
The inverse satisfies the same kind of estimate. Taking the countable union of bounded boxes covering $\Q_p^{dn+1}$, we obtain
\begin{equation*}
 \dimH K_B=\dimH(B\times \Q_p^{(d-1)n+1}).
\end{equation*}
By \eqref{eq:product-dimension}, the right-hand side is
$\dimH B+(d-1)n+1$. This proves \eqref{eq:lift-dimension}.
\end{proof}

\subsection*{Leading directions and unit reparametrizations}
The lifting lemma is stated in terms of leading vectors. In
Theorem~\ref{thm:padic}, however, only their projective directions
are prescribed. We now explain how to choose sufficiently many
leading vectors without changing the polynomial images. First,
we compute the dimension of the cone over a direction set.

\begin{lemma}[Dimension of the directional cone]
\label{lem:cone}
For nonempty $V\subset\PP^{n-1}(\Q_p)$, let
\begin{equation*}
 C(V)\defeq \{v\in \Q_p^n\setminus\{0\}:[v]\in V\}.
\end{equation*}
Then
\begin{equation}\label{eq:cone-dimension}
 \dimH C(V)=\dimH V+1.
\end{equation}
\end{lemma}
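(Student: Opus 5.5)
We reduce to a product situation by using affine charts on projective space, together with Lemma~\ref{lem:products} and the bi-Lipschitz invariance and countable stability of Hausdorff dimension. Cover $\PP^{n-1}(\Q_p)$ by the $n$ standard charts
\begin{equation*}
U_i=\{[u]:u\in\Z_p^n \text{ primitive},\ |u_i|_p=1\},\qquad i=1,\ldots,n .
\end{equation*}
On $U_i$ we normalize $u_i=1$. The remaining coordinates then lie in $\Z_p^{n-1}$, and the projective metric is the supremum metric on them. Put $V_i=V\cap U_i$. Then $C(V)=\bigcup_i C(V_i)$ and $\dimH V=\max_i\dimH V_i$. By countable stability it therefore suffices to prove $\dimH C(V_i)=\dimH V_i+1$ for each $i$ with $V_i\neq\varnothing$.

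Fix such an $i$ and let $Y_i\subset\Z_p^{n-1}$ be the set of affine coordinates of $V_i$, so that $Y_i$ is isometric to $V_i$. Consider the map
\begin{equation*}
F:Y_i\times(\Q_p\setminus\{0\})\to C(V_i),\qquad F(y,\lambda)=\lambda\cdot(y_1,\ldots,1,\ldots,y_{n-1}),
\end{equation*}
with $1$ inserted in the $i$-th slot. This map is a bijection: the inverse reads off $\lambda=v_i$ and $y=v/v_i$, omitting the $i$-th coordinate.

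Now decompose $\Q_p\setminus\{0\}=\bigcup_{h\in\Z}p^h\Z_p^\times$, a countable union of open sets. On each piece $Y_i\times p^h\Z_p^\times$ the map $F$ is bi-Lipschitz, with constants depending on $h$. Indeed, $|\lambda|_p=p^{-h}$ is fixed and $y\in\Z_p^{n-1}$. By the ultrametric inequality,
\begin{equation*}
\|\lambda y-\lambda' y'\|\leq\max\{|\lambda|_p\|y-y'\|,\ |\lambda-\lambda'|_p\|y'\|\}.
\end{equation*}
Conversely, $|\lambda-\lambda'|_p$ is bounded by the distance of the $i$-th coordinates. Moreover $y-y'=(v-v')/\lambda+v'(1/\lambda-1/\lambda')$, and $|1/\lambda-1/\lambda'|_p=|\lambda-\lambda'|_p p^{2h}$.

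Combining these facts with the open-set version of Lemma~\ref{lem:products} (with $b=1$), we obtain
\begin{equation*}
\dimH C(V_i)=\sup_h\dimH(Y_i\times p^h\Z_p^\times)=\dimH Y_i+1=\dimH V_i+1.
\end{equation*}

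The only point that needs care is the two-sided Lipschitz estimate for $F^{-1}$. The concern is that the scale factor $|\lambda|_p$ might degenerate. This is why we restrict to the annuli $p^h\Z_p^\times$, where $|\lambda|_p$ is constant, and use the chart normalization, under which the $i$-th coordinate records $\lambda$ exactly. Everything else is routine.
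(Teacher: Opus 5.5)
Your proposal is correct and follows essentially the same route as the paper: normalize one chart coordinate to $1$, parametrize the cone by $(\omega,\lambda)\mapsto\lambda v(\omega)$, check that this is bi-Lipschitz on the scalar annuli $p^h\Z_p^\times$, and conclude with Lemma~\ref{lem:products} and countable stability. Your integral charts $U_i$ make the paper's ``bounded chart pieces'' automatic, and your explicit two-sided Lipschitz estimates supply details the paper leaves to the reader.
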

\begin{proof}
Fix a projective chart, normalize one coordinate to $1$, and
write the resulting representative as $v(\omega)$. The map
$(\omega,\lambda)\mapsto\lambda v(\omega)$, with $\lambda\in \Q_p^\times$,
parametrizes the corresponding part of the cone. On bounded pieces where
$|\lambda|_p$ is bounded away from zero, this map and its inverse are
Lipschitz. By Lemma~\ref{lem:products}, the nonzero scalar factor contributes
one dimension. Taking the countable union over bounded chart
pieces and scalar annuli gives \eqref{eq:cone-dimension}.
\end{proof}

\begin{lemma}[Leading vectors from unit reparametrizations]
\label{lem:representatives}
Choose one polynomial $\gamma_\omega$ and its leading vector $v_\omega$
for each $\omega\in V$ as in the curved Kakeya theorem, and put
\begin{equation*}
 W_d\defeq \{\alpha^d v_\omega:\omega\in V,\ \alpha\in\Z_p^\times\}.
\end{equation*}
Every $w\in W_d$ is the leading coefficient of a degree-$d$ polynomial
$\gamma_w$ satisfying $\gamma_w(\Z_p)\subset E$. Moreover,
\begin{equation*}
 \dimH W_d=\dimH V+1.
\end{equation*}
\end{lemma}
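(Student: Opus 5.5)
The plan has two parts: the polynomial identity, which is a direct reparametrization, and the dimension formula, which I will reduce to Lemma~\ref{lem:cone} by a sandwich argument.

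\emph{Polynomial identity.} Fix $\omega\in V$ and $\alpha\in\Z_p^\times$, and define $\gamma_w(t)\defeq\gamma_\omega(\alpha t)$ for $w=\alpha^dv_\omega$. Then $\gamma_w$ is a polynomial of degree $d$ with leading coefficient $\alpha^dv_\omega=w$. Multiplication by a unit is a bijection of $\Z_p$, so $\gamma_w(\Z_p)=\gamma_\omega(\alpha\Z_p)=\gamma_\omega(\Z_p)\subset E$. One point needs care: a given $w$ may arise from several pairs $(\omega,\alpha)$. Any choice works, since each produces a valid polynomial. Note also that $\omega=[w]$ is determined by $w$, so only $\alpha$ is ambiguous.

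\emph{Upper bound.} Every $w\in W_d$ is nonzero with $[w]\in V$, so $W_d\subset C(V)$. Lemma~\ref{lem:cone} then gives $\dimH W_d\leq\dimH V+1$.

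\emph{Lower bound.} This is the main step. Because the choice $v_\omega$ is arbitrary, $W_d$ is not the whole cone. For each $\omega$, the slice $W_d\cap\Q_p^\times v_\omega$ equals $(\Z_p^\times)^d v_\omega$. The group $(\Z_p^\times)^d$ of $d$-th powers is an open subgroup of finite index in $\Z_p^\times$, by Hensel's lemma, since $(1+p^k\Z_p)\subset(\Z_p^\times)^d$ for suitable $k$ depending on $p$ and $d$. So each slice contains $(1+p^k\Z_p)\alpha^d v_\omega$, a ball of radius comparable to $\norm{v_\omega}$ inside the line $\Q_p v_\omega$, but the radius scale varies with $\omega$.

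To handle the varying scale, first normalize representatives. Partition $V$ by a projective chart (say coordinate $i$ is the one of largest absolute value) and by the value $h\in\Z$ with $\norm{v_\omega}=p^{h}$, and by the class of the $i$-th coordinate of $v_\omega/p^h$ (a unit) modulo $p^k$. This gives countably many pieces $V_{i,h,c}$, and $V$ is their union. By countable stability, $\dimH V_{i,h,c}=\dimH V$ for some piece, or at least $\sup$ of these equals $\dimH V$.

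On that piece, write $v_\omega=p^h\lambda_\omega v(\omega)$, where $v(\omega)$ is the chart representative and $\lambda_\omega\in c+p^k\Z_p$ is a unit. Then $W_d$ contains
\[
\{\beta\, v(\omega):\omega\in V_{i,h,c},\ \beta\in p^h c^d(1+p^{k'}\Z_p)\}
\]
for a suitable $k'$. Here one uses that $\lambda_\omega(1+p^k\Z_p)=c(1+p^k\Z_p)$ when $\lambda_\omega\equiv c$, together with $(1+p^{k}\Z_p)\subset(\Z_p^\times)^d$; adjust $k$ upward if needed for the $d$-th power step. This set is the image of $V_{i,h,c}\times(\text{a fixed ball in }\Q_p^\times)$ under the map $(\omega,\beta)\mapsto\beta v(\omega)$. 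That map is bi-Lipschitz on this bounded piece, as in the proof of Lemma~\ref{lem:cone}.

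Lemma~\ref{lem:products}, in its open-subset form, then gives dimension $\dimH V_{i,h,c}+1$. Taking the supremum over pieces yields $\dimH W_d\geq\dimH V+1$.

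The main obstacle is this lower bound: making the scalar factor uniform across directions. The partition above by the norm of $v_\omega$ and by the residue class of the normalizing unit is designed to handle exactly that.
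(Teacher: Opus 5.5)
Your proof is correct. The polynomial identity and the upper bound via $W_d\subset C(V)$ are fine. Your lower bound, however, takes a different route from the paper.

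\emph{The paper's route.} It never partitions $V$. It uses the Hensel inclusion $1+p^{h_0}\Z_p\subset(\Z_p^\times)^d$ only to conclude that $\Z_p^\times/(\Z_p^\times)^d$ is finite, and picks coset representatives $c_1,\ldots,c_J$. Since every $\lambda\in\Q_p^\times$ has the form $p^hc_i\alpha^d$, this gives $C(V)=\bigcup_{h\in\Z}\bigcup_i p^hc_iW_d$. Each $p^hc_iW_d$ is a similarity image of $W_d$, so countable stability gives $\dimH W_d=\dimH C(V)$ in one stroke. Lemma~\ref{lem:cone} then yields both inequalities at once.

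\emph{Your route.} You re-run the product argument of the cone lemma locally. After splitting $V$ by chart, by $\norm{v_\omega}$, and by the residue class of the normalizing unit, you show that $W_d$ contains a bi-Lipschitz image of $V_{i,h,c}\times(\text{ball})$, and you apply Lemma~\ref{lem:products}. This uses the same Hensel input, in the form that $(\Z_p^\times)^d$ contains a neighbourhood of $1$. It costs an extra countable decomposition and a second bi-Lipschitz check.

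\emph{The trade-off.} The paper's covering argument avoids that extra work by exploiting that $W_d$ is stable under $(\Z_p^\times)^d$. Finitely many rescalings of $W_d$ therefore fill every line of the cone, so no localization in $V$ is needed. Your argument is more hands-on, and it shows directly that each direction contributes a full scalar ball.

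\emph{Two slips to fix.}
\begin{itemize}
\item With $|p|_p=p^{-1}$ and $v_\omega=p^h\lambda_\omega v(\omega)$, the normalization should be $\norm{v_\omega}=p^{-h}$, not $p^{h}$.
\item The scalar set contained in $W_d$ is $p^hc(1+p^k\Z_p)$, not $p^hc^d(1+p^{k'}\Z_p)$. Indeed, $\alpha^d\lambda_\omega$ ranges over $\lambda_\omega(\Z_p^\times)^d\supset\lambda_\omega(1+p^k\Z_p)=c(1+p^k\Z_p)$. No further enlargement of $k$ is needed.
\end{itemize}
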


\begin{proof}
If $w=\alpha^d v_\omega$ with $\alpha\in\Z_p^\times$, take
\begin{equation*}
 \gamma_w(t)\defeq \gamma_\omega(\alpha t)
 =\sum_{j=0}^{d-1}\alpha^j a_{\omega,j}t^j+wt^d.
\end{equation*}
Since $\alpha\Z_p=\Z_p$,
\begin{equation*}
 \gamma_w(\Z_p)=\gamma_\omega(\alpha\Z_p)
 =\gamma_\omega(\Z_p)\subset E.
\end{equation*}
This proves the first assertion.

It remains to calculate the dimension of $W_d$. We first show
that $\Z_p^\times/(\Z_p^\times)^d$ is finite. Write $v_p$ for the additive
valuation with $v_p(p)=1$, and put $h_0\defeq 2v_p(d)+1$. If
$u\in1+p^{h_0}\Z_p$, apply the strong form of Hensel's lemma to
$f(X)\defeq X^d-u$ at $X=1$. Indeed,
\begin{equation*}
 v_p(f(1))\geq h_0>2v_p(d)=2v_p(f'(1)).
\end{equation*}
The strong Hensel criterion \cite[Theorem~7.32]{Milne} gives a root
$\beta\in\Z_p$ with $\beta^d=u$. It is a unit since $u$ is a unit.
Consequently,
\begin{equation*}
 1+p^{h_0}\Z_p\subset(\Z_p^\times)^d.
\end{equation*}
Since $\Z_p^\times/(1+p^{h_0}\Z_p)$ is finite, the claim
follows. The argument also applies when $p\mid d$.

Choose representatives $c_1,\ldots,c_J\in\Z_p^\times$ for
$\Z_p^\times/(\Z_p^\times)^d$. Since
$\Q_p^\times=p^{\Z}\Z_p^\times$, the directional cone satisfies
\begin{equation*}
 C(V)=\bigcup_{h\in\Z}\ \bigcup_{i=1}^J p^hc_iW_d.
\end{equation*}
Indeed, every $v\in C(V)$ has the form $\lambda v_\omega$, and
$\lambda=p^hc_i\alpha^d$ for some $h\in\Z$, $1\leq i\leq J$, and
$\alpha\in\Z_p^\times$. Conversely, scalar multiplication preserves
the projective direction. So countable
stability and \eqref{eq:cone-dimension} give
\begin{equation*}
 \dimH C(V)
 =\sup_{h\in\Z,\,1\leq i\leq J}\dimH(p^hc_iW_d)
 =\dimH W_d=\dimH V+1.
\end{equation*}
This proves the dimension assertion. 
\end{proof}

\subsection*{The dimension bound and its scope}
We are now ready to prove Theorem~\ref{thm:padic}. The preceding
lemmas give the dimensions of the lifted direction set and the
full inverse image. Comparing them by the line theorem will give
the desired lower bound.

\begin{proof}[Proof of Theorem~\ref{thm:padic}]
Let $s\defeq \dimH V$, and let $W_d$ be the set from
Lemma~\ref{lem:representatives}. Then $\dimH W_d=s+1$.
For each $w\in W_d$, the lemma gives a polynomial with leading
vector $w$ whose image satisfies $\gamma_w(\Z_p)\subset E$.
We may therefore apply Lemma~\ref{lem:lifting} with $B=E$ and
$W=W_d$.
It gives a nondegenerate affine line segment in $K_E\defeq \Phi^{-1}(E)$
in every direction in
\begin{equation*}
 D(W_d)=\{[u_0:\cdots:u_{d-2}:w:1]:
 u_j\in\Q_p^n,\ w\in W_d\}.
\end{equation*}
The projective parametrization is locally bi-Lipschitz in its affine chart.
Consequently, Lemma~\ref{lem:products} and the dimension of $W_d$ give
\begin{equation*}
 \dimH D(W_d)=(d-1)n+\dimH W_d
 =(d-1)n+s+1.
\end{equation*}
The restricted-direction segment estimate \eqref{eq:Qp-lines}, applied
in ambient dimension $dn+1$, yields
\begin{equation*}
 \dimH K_E\geq\dimH D(W_d)+1=(d-1)n+s+2.
\end{equation*}
On the other hand, Lemma~\ref{lem:lift-dimension} gives
$\dimH K_E=\dimH E+(d-1)n+1$. This computes the dimension
of the full inverse image $\Phi^{-1}(E)$. It does not require
that the lifted segments extend to entire lines in that set.
Combining the two bounds, we obtain
\begin{equation*}
 (d-1)n+s+2\leq\dimH K_E=\dimH E+(d-1)n+1.
\end{equation*}
Subtracting the common auxiliary dimension gives
$\dimH E\geq s+1$, as required.
\end{proof}

\begin{remark}[Sharpness]
\label{rem:sharpness-and-scope}
For every nonempty $V\subset\PP^{n-1}(\Q_p)$, the bounded set
\begin{equation*}
 E_V\defeq \{0\}\cup\bigl(C(V)\cap\Z_p^n\bigr)
\end{equation*}
contains $\gamma_\omega(\Z_p)$ for
$\gamma_\omega(t)\defeq v_\omega t^d$ with $\norm{v_\omega}=1$.
Since $C(V)=\bigcup_{h\geq0}p^{-h}(E_V\setminus\{0\})$, countable
stability and the cone lemma give $\dimH E_V=\dimH V+1$.
Thus the lower bound in Theorem~\ref{thm:padic} is attained for
each prescribed $V$.
\end{remark}

\begin{remark}[Finite field extensions]
\label{rem:finite-extensions}
The curved Kakeya theorem also holds over a finite extension
$F/\Q_p$, with ring of integers $\mathcal O_F$, when the hypothesis
is $\gamma_\omega(\mathcal O_F)\subset E$. To adapt the proof,
we first obtain the required line estimate by restriction of scalars.
Normalize $|x|_F\defeq |N_{F/\Q_p}(x)|_p$, so that $\dimH F=1$.
Put $\kappa\defeq [F:\Q_p]$ and identify $F^n$ with
$\Q_p^{\kappa n}$ using a $\Q_p$-basis of $F$. The coordinate
metric is comparable to the $1/\kappa$ power of the normalized
$F$-metric. Write $\dimH^{\Q_p}$ for Hausdorff dimension in
$\Q_p$ coordinates and $\dimH^F$ for Hausdorff dimension in the
normalized $F$-metric. Thus $\dimH^{\Q_p}=\kappa\dimH^F$
for sets viewed in these two metrics.

Suppose that $A\subset F^n$ contains nondegenerate $F$-segments in
a nonempty set of directions $D\subset\PP^{n-1}(F)$, and let
$\widetilde D\subset\PP^{\kappa n-1}(\Q_p)$ be the corresponding
set of $\Q_p$-line directions. The $\Q_p$-line directions inside
each $F$-direction form $\PP^{\kappa-1}(\Q_p)$. Local projective
product charts and Lemma~\ref{lem:products} therefore give
\begin{equation*}
 \dimH^{\Q_p}\widetilde D
 =\kappa\dimH^F D+(\kappa-1).
\end{equation*}
Moreover, every nondegenerate $F$-segment contains a nondegenerate
$\Q_p$-segment in each such direction: multiply a given scalar by
a sufficiently large power of $p$ to place its $\Z_p$-multiples
inside the parameter ball. The segment form of \eqref{eq:Qp-lines}
now yields
\begin{equation*}
 \kappa\dimH^F A=\dimH^{\Q_p}A
 \geq\dimH^{\Q_p}\widetilde D+1
 =\kappa\dimH^F D+\kappa.
\end{equation*}
Thus $\dimH^F A\geq\dimH^F D+1$, as required for the line estimate
over $F$.

Unit reparametrizations preserve $\mathcal O_F$, and the same Hensel argument
shows that $\mathcal O_F^\times/(\mathcal O_F^\times)^d$ is finite.
Using powers of a uniformizer in the countable cone decomposition, the
leading-vector lemma and the lifting proof are unchanged. They give
$\dimH E\geq\dimH V+1$ under $\gamma_\omega(\mathcal O_F)\subset E$.
\end{remark}

\medskip
\section{Measure-theoretic input and projection incidence}
\label{sec:measure-theoretic-input}

Theorem~\ref{thm:padic} gives a lower bound for a union of curves.
To study evaluation images, we also need a lower bound for its
individual slices. In this section, we prove such a statement for
analytic families of lines. We first choose measures on the slices
using Frostman's lemma and measurable selection. We then derive
a tube maximal inequality from Dhar's set estimate and apply it to
estimate their small-ball masses.
This gives Theorem~\ref{thm:quantitative-line-slicing}, which will
be used in Sections~\ref{sec:polynomial-projections}
and~\ref{sec:block-evaluations}.

\subsection*{Frostman measures and measurable selection}
We begin by stating the measure-theoretic results needed in the
proof. A subset of a Polish space is
\emph{universally measurable} if it belongs to the completion of every
finite Borel measure on that space. A finite Borel measure is
\emph{carried by} such a set if its completion assigns full mass to
the set. Analytic sets are universally measurable \cite{Kechris}, so
this terminology applies even when the sets in question are not Borel.

\begin{lemma}[Frostman measures on analytic subsets of $\Q_p^m$]
\label{lem:projection-frostman}
Let $m\geq1$. If $A\subset\Q_p^m$ is analytic and $0<\tau<\dimH A$, then
there are a compact set $K\subset A$, a Borel probability measure
$\nu$ carried by $K$, and a constant $C>0$ such that
\begin{equation*}
 \nu(B(u,\rho))\leq C\rho^\tau
 \qquad(u\in\Q_p^m,\ 0<\rho\leq1).
\end{equation*}
\end{lemma}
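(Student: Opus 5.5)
The plan is to reduce to the compact case and then run the standard dyadic construction of a Frostman measure, which is particularly clean in the ultrametric setting of $\Q_p^m$.

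\emph{Step 1: reduction to a compact set.} Since $A$ is analytic and $\tau<\dimH A$, the first task is to find a compact $K\subset A$ with $\HH^{\tau'}(K)>0$ for some $\tau<\tau'<\dimH A$. This is the increasing sets lemma / Davies--Howroyd theorem that analytic sets of positive (or infinite) $\HH^{\tau'}$-measure contain compact subsets of positive finite measure. The statement is classical in complete separable metric spaces, so it applies to $\Q_p^m$. Only $\HH^{\tau'}(K)>0$ is needed below. This is the step I expect to be the genuine obstacle, since the set is merely analytic and $\HH^{\tau'}$ is not $\sigma$-finite on $A$. I would cite the general metric-space result (Howroyd, or Rogers' \emph{Hausdorff Measures}, or Mattila's Theorem 8.13 argument adapted) rather than reprove it. The ultrametric nature only simplifies the net-measure comparison used there. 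After translating and rescaling by a power of $p$, and using countable stability, I may assume $K\subset\Z_p^m$.

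\emph{Step 2: Frostman's lemma on $\Z_p^m$.} Here the balls of radius $p^{-\ell}$ form a nested tree, exactly as dyadic cubes do, and every ball in $\Q_p^m$ of radius $\rho\le1$ meeting $\Z_p^m$ is one of these tree balls with radius $p^{-\lfloor\log_p(1/\rho)\rfloor}\le p\rho$. I would run the usual mass-distribution construction. For large $L$, place on each level-$L$ ball $Q$ meeting $K$ the mass $p^{-L\tau'}$, distributed uniformly. Then, going up the tree, rescale so that each ball $Q$ of radius $p^{-\ell}$ carries mass at most $p^{-\ell\tau'}$. Each top-level maximal ball then has full capacity, giving total mass at least $c\,\mathcal H^{\tau'}_\infty(K)$. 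The net content here is comparable to the ball content, since the covers use exactly these balls. Passing to a weak-$*$ limit as $L\to\infty$ gives a measure $\nu_0$ supported in $K$, using compactness of $K$ and the fact that tree balls are clopen. It satisfies $\nu_0(Q)\le p^{-\ell\tau'}$ on tree balls and has positive mass.

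\emph{Step 3: normalization and the bound for all balls.} Set $\nu=\nu_0/\nu_0(K)$. For an arbitrary $u$ and $0<\rho\le1$, the ball $B(u,\rho)$ is either disjoint from $\Z_p^m$ or is a tree ball of radius at most $\rho$. This gives $\nu(B(u,\rho))\le \nu_0(K)^{-1}\rho^{\tau'}\le C\rho^{\tau}$. Undoing the rescaling costs only a constant. Finally, $\nu$ is a Borel probability measure carried by the compact set $K\subset A$.
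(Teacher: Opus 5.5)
Your proposal is correct and has the same structure as the paper's proof. Both extract a compact $K\subset A$ of positive Hausdorff measure from the regularity theorem for analytic sets, then apply Frostman's lemma on $K$, normalize, and absorb the remaining radii into the constant. The differences are only in the inputs: the paper obtains $K$ from Bishop--Peres (Corollary B.2.4) after transferring to the dyadic tree metric via $d_p=d_2^{\log_2 p}$, then quotes Frostman's lemma for compact metric spaces (Mattila, Theorem 8.17), whereas you cite Davies--Howroyd directly in $\Q_p^m$ and carry out the tree mass-distribution construction by hand, which is equally valid.
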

\begin{proof}
By countable stability of Hausdorff dimension, we may choose a
bounded ball $B_0$ such that $\dimH(A\cap B_0)>\tau$.
Up to a similarity, $B_0$ is the boundary of its residue-class
tree. Let $d_p$ be the
natural tree metric with scale $p^{-j}$, and let $d_2$ be the
metric with scale $2^{-j}$ used in \cite{BP}. These metrics satisfy
\begin{equation*}
 d_p=d_2^{\log_2 p}.
\end{equation*}
Since $A\cap B_0$ is analytic and has positive $\tau$-dimensional
Hausdorff measure in $d_p$, it has positive
$(\tau\log_2 p)$-dimensional Hausdorff measure in $d_2$.
Applying \cite[Corollary~B.2.4]{BP} with exponent $\tau\log_2 p$
gives a compact $K\subset A\cap B_0$ with positive measure of that
dimension in $d_2$, and hence $\HH^\tau(K)>0$ in the original
$p$-adic metric.

Frostman's lemma for compact metric spaces
\cite[Theorem~8.17]{Mattila} supplies a nonzero finite Borel
measure $\mu$ on $K$ which, extended by zero to $\Q_p^m$, satisfies
\begin{equation*}
  \mu(B(u,\rho))\leq C_0\rho^\tau
\end{equation*}
for every $u\in\Q_p^m$ and all sufficiently small $\rho>0$.
Normalize $\mu$ to have total mass one. By increasing the
constant, the same estimate holds for all remaining radii
$0<\rho\leq1$. The resulting measure is the required $\nu$.
\end{proof}

We shall see for an analytic line family, the slope set is also analytic.
The preceding lemma therefore gives a Frostman measure on a
compact subset of the slopes. We must now lift this measure to
the line family while retaining its slope marginal. The following
selection theorem allows us to choose one intercept for each
slope.

\begin{theorem}[Jankov--von Neumann measurable selection]
\label{thm:projection-selection}
Let $X,Z$ be Polish spaces, and let $Y\subset X\times Z$ be analytic.
Writing $D\defeq \{u\in X:\exists w\in Z,\ (u,w)\in Y\}$, there is a universally
measurable map $\varsigma:D\to Z$ with $(u,\varsigma(u))\in Y$ for every $u\in D$.
Consequently, any Borel probability measure $\nu$ carried by $D$ lifts to a
Borel probability measure $\lambda$ carried by $Y$ with first-coordinate
marginal $\nu$. This is the universal-measurability consequence of
\cite[Theorem~18.1]{Kechris}.
\end{theorem}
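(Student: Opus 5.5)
The plan is to take the selection statement directly from \cite[Theorem~18.1]{Kechris} and then build the lifted measure as a pushforward along the graph of the selector. Before selecting, we note that $D$ is analytic, since it is the image of the analytic set $Y$ under the continuous first-coordinate projection $X\times Z\to X$.

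\textbf{Step 1: the selector.} We apply \cite[Theorem~18.1]{Kechris} to $Y$ to obtain a uniformizing function $\varsigma:D\to Z$ with $(u,\varsigma(u))\in Y$ for every $u\in D$. This $\varsigma$ is measurable with respect to the $\sigma$-algebra generated by the analytic sets. Every analytic set is universally measurable, so this $\sigma$-algebra lies inside the universally measurable $\sigma$-algebra. Hence $\varsigma$ is universally measurable, and so is the graph map $g(u)\defeq(u,\varsigma(u))$ from $D$ to $X\times Z$, because its first coordinate is the identity.

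\textbf{Step 2: the lifted measure.} Let $\nu$ be a Borel probability measure carried by $D$, and let $\bar\nu$ be its completion, so $\bar\nu(D)=1$. For a Borel set $F\subset X\times Z$, the preimage $g^{-1}(F)$ is a universally measurable subset of $D$, and in particular it is $\bar\nu$-measurable. We therefore set
\begin{equation*}
 \lambda(F)\defeq\bar\nu\bigl(g^{-1}(F)\bigr),
\end{equation*}
which is a Borel probability measure on $X\times Z$. Its first-coordinate marginal is obtained from
\begin{equation*}
 \lambda(F\times Z)=\bar\nu\bigl(D\cap F\bigr)=\nu(F),
\end{equation*}
where the last equality holds because $\nu$ is carried by $D$.

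\textbf{Step 3: $\lambda$ is carried by $Y$.} This is the one step needing care, since $Y$ need not be Borel. Let $F\supset Y$ be any Borel set. Since $g(D)\subset Y\subset F$, we have $g^{-1}(F)=D$, so $\lambda(F)=1$. Thus the outer measure of $Y$ is $1$. Because $Y$ is analytic, it is universally measurable and hence $\bar\lambda$-measurable. Its completed measure therefore equals its outer measure, so $\bar\lambda(Y)=1$.

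The main obstacle is the measurability bookkeeping with non-Borel sets. One must check that the preimage of a Borel set under the $\sigma(\Sigma^1_1)$-measurable selector is $\bar\nu$-measurable, and use outer measure to pass from Borel supersets to $Y$ itself. Everything else is formal.
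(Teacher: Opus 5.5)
Your proposal is correct and is essentially the paper's argument. You take the selector from Kechris's Theorem~18.1, use that $\sigma(\Sigma^1_1)$-measurable maps are universally measurable, and define $\lambda$ as the pushforward of the completion of $\nu$ under $u\mapsto(u,\varsigma(u))$; your marginal computation and your outer-measure argument for $\bar\lambda(Y)=1$ spell out what the paper states in a single sentence.
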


For completeness, the lifted measure is defined by
\begin{equation*}
 \lambda\defeq (u\mapsto(u,\varsigma(u)))_*\nu.
\end{equation*}
The map is defined on $D$, which has full $\nu$-measure.
We use the completion of $\nu$ to define the pushforward on
Borel sets. This is well-defined by universal measurability.
The map takes values in $Y$ and preserves the first coordinate,
so $\lambda$ is carried by $Y$ and has marginal $\nu$.

We shall apply this construction to a bounded part of the line
family, with $\nu$ supplied by
Lemma~\ref{lem:projection-frostman}. Evaluating the lines gives
a probability measure on each slice. We then estimate the
mass of small balls for these measures to invoke the mass distribution principle to obtain a lower bound on the Hausdorff dimension. For this purpose we use
the following maximal inequality.

\subsection*{The tube maximal estimate}
For integers $m\geq2$ and $\ell\geq1$, put $\delta=p^{-\ell}$.
For $f\in L^m(\Z_p^m)$, define
\begin{equation*}
 \mathcal K_\delta f(u)
 \defeq \sup_{w\in\Z_p^{m-1}}
 \int_{\Z_p}\frac{1}{\delta^{m-1}}
 \int_{B(w+tu,\delta)}|f(t,z)|\,dz\,dt,
 \qquad u\in\Z_p^{m-1}.
\end{equation*}
The graph $t\mapsto(t,w+tu)$, $t\in\Z_p$, is a unit line
segment, and its $\delta$-neighborhood has Haar measure
$\delta^{m-1}$. Thus the factor $\delta^{-(m-1)}$ normalizes
the integral to be the average over that tube.

For the following lemma, related finite-ring maximal estimates were obtained by Dhar
\cite[Theorem~1.6]{DharMax}. For the present argument, we use
only the published set estimate \cite[Theorem~1.8]{DharSet}
and give the required maximal-inequality deduction below.

\begin{lemma}[Tube maximal inequality from Dhar's set estimate]
\label{lem:projection-tube-maximal}
For every $m\geq2$, $\ell\geq1$, and $f\in L^m(\Z_p^m)$,
\begin{equation}\label{eq:projection-tube-maximal}
 \|\mathcal K_{p^{-\ell}}f\|_{L^m(\Z_p^{m-1})}
 \leq C_{p,m}(\ell+1)^2\|f\|_{L^m(\Z_p^m)}.
\end{equation}
In particular, for every $\varepsilon>0$ and $\delta=p^{-\ell}$,
\begin{equation*}
 \|\mathcal K_\delta f\|_{L^m(\Z_p^{m-1})}
 \leq C_{p,m,\varepsilon}\delta^{-\varepsilon}
            \|f\|_{L^m(\Z_p^m)}.
\end{equation*}
\end{lemma}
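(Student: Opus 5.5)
We will derive the $L^m$ bound from the set estimate in Theorem~\ref{thm:dhar} through a standard level-set (restricted weak type) argument, followed by dyadic pigeonholing in the values of $f$.

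\textbf{Discretization.} First reduce to functions constant on cells of side $\delta=p^{-\ell}$. Tube averages over $\delta$-neighbourhoods of unit segments only see $f$ through its conditional expectation on $\delta$-cells: the tube $\{(t,z):z\in B(w+tu,\delta)\}$ is a union of such cells, since the fibre condition depends only on $t\bmod p^\ell$ when $u\in\Z_p^{m-1}$. Replacing $|f|$ by its averages on cells does not change $\mathcal K_\delta f$ and does not increase $\|f\|_{L^m}$. Likewise, $\mathcal K_\delta f(u)$ depends only on $u\bmod p^\ell$. So everything lives on $\mathcal R_\ell^m$, with directions $(1,\bar u)$. Each tube is a line in $\mathcal R_\ell^m$ whose first coordinate is a unit, and distinct $\bar u$ give distinct projective directions. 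The normalized tube average becomes $p^{-\ell}\sum_{x\in L}F(x)$ for a line $L$ and a cell-value function $F\ge0$.

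\textbf{Restricted weak type for indicators.} Let $F=\one_A$ with $A\subset\mathcal R_\ell^m$, and fix $\lambda\in(0,1]$. Let $\Omega_\lambda$ be the set of $\bar u$ for which some line of direction $(1,\bar u)$ meets $A$ in at least $\lambda p^\ell$ points, and put $J=\#\Omega_\lambda$. Corollary~\ref{cor:dhar-directions} with $h=\lceil\lambda p^\ell\rceil$ gives
\begin{equation*}
 \#A\geq c(\ell+1)^{-m}J\lambda^m p^{\ell}.
\end{equation*}
In measure terms, with $\mathcal L^{m-1}(\{\mathcal K_\delta\one_A\ge\lambda\})=Jp^{-\ell(m-1)}$ and $\mathcal L^m(A)=\#A\,p^{-\ell m}$, this reads
\begin{equation*}
 \mathcal L^{m-1}\{\mathcal K_\delta\one_A\geq\lambda\}\leq C(\ell+1)^m\lambda^{-m}\mathcal L^m(A).
\end{equation*}
This is a restricted weak-type $(m,m)$ bound with constant $C^{1/m}(\ell+1)$.

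\textbf{From restricted weak type to strong type.} Only boundedly many losses of $(\ell+1)$ are permitted, which is why the target exponent is $2$. Normalize $\|F\|_{L^m}=1$ and write $F\le\sum_k 2^{k}\one_{A_k}$, where $A_k=\{2^{k-1}<F\le 2^k\}$. Only $O(\ell)$ levels matter:
\begin{itemize}
\item cells with $F\le p^{-m\ell}$ contribute at most $p^{-m\ell}$ pointwise, which is negligible;
\item $F\le p^{\ell m/m}\cdot C$, since a single cell carries mass $\le1$ in $L^m$;
\item tube averages are at most $\|F\|_\infty\le p^{\ell}$.
\end{itemize}
So $k$ ranges over $O(\ell+1)$ values. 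By the triangle inequality in weak $L^m$, or more simply in $L^m$, with each level's strong bound obtained from the weak bound, we need a strong bound for a single indicator. For $\one_A$, the function $\mathcal K_\delta\one_A$ takes values in $p^{-\ell}\Z\cap[0,1]$. Splitting dyadically in $\lambda$ over $O(\ell+1)$ ranges and summing the weak bounds gives
\begin{equation*}
 \|\mathcal K_\delta\one_A\|_m^m\le C(\ell+1)^{m+1}\mathcal L^m(A),
\end{equation*}
that is, a loss of $(\ell+1)^{1+1/m}$. Summing over $O(\ell)$ levels by the triangle inequality would then cost another factor of $\ell$.

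The main obstacle is keeping the exponent at $2$. I would first try Hölder across levels: $\|\sum_k 2^k\mathcal K\one_{A_k}\|_m\le(\#k)^{1-1/m}\bigl(\sum_k 2^{km}\|\mathcal K\one_{A_k}\|_m^m\bigr)^{1/m}$. This gives total loss $(\ell+1)^{1-1/m}\cdot(\ell+1)^{1+1/m}=(\ell+1)^2$, using $\sum_k 2^{km}\mathcal L^m(A_k)\lesssim\|F\|_m^m$. This yields exactly \eqref{eq:projection-tube-maximal}. The $\delta^{-\varepsilon}$ form then follows since $(\ell+1)^2\le C_{\varepsilon}p^{\varepsilon\ell}$.
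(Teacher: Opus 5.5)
Your proposal is correct and follows essentially the same route as the paper. Both arguments use the restricted weak-type bound from Corollary~\ref{cor:dhar-directions}, the $(\ell+1)^{1+1/m}$ strong bound for indicators over $O(\ell)$ dyadic ranges, and H\"older across $O(\ell)$ dyadic levels of the normalized cell-averaged function to reach $(\ell+1)^2$; the differences are cosmetic (the paper discretizes to $\mathcal R_\ell^m$ at the end rather than the start, truncates low values at $1$ rather than $p^{-m\ell}$, and handles small thresholds via $q^{-m}\leq\eta_m(A)$ instead of the value lattice $p^{-\ell}\Z$).
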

\begin{proof}
Put $q=p^\ell$, and let $\eta_k$ denote normalized counting
measure on $\mathcal R_\ell^k$. For a nonnegative function
$g:\mathcal R_\ell^m\to[0,\infty)$, define
\begin{equation*}
 \mathcal M_\ell g(u)
 \defeq\max_{w\in\mathcal R_\ell^{m-1}}
       \frac1q\sum_{t\in\mathcal R_\ell}g(t,w+tu),
 \qquad u\in\mathcal R_\ell^{m-1}.
\end{equation*}
We first establish the finite-ring bound with all norms taken
with respect to these probability measures.

Let $A\subset\mathcal R_\ell^m$ and $0<\alpha<1$.
If $\mathcal M_\ell\one_A(u)>\alpha$, then $A$ meets a line
of direction $[1:u]$ in at least $\lceil q\alpha\rceil$ points.
Distinct slopes give distinct projective directions. Applying
Corollary~\ref{cor:dhar-directions} to these directions and
rearranging gives
\begin{equation*}
 \eta_{m-1}\{u:\mathcal M_\ell\one_A(u)>\alpha\}
 \leq C_{p,m}(\ell+1)^m\alpha^{-m}\eta_m(A).
\end{equation*}
This is the restricted weak-type estimate obtained from the set
bound. Since $0\leq\mathcal M_\ell\one_A\leq1$, integration of
its distribution function gives, for nonempty $A$,
\begin{equation*}
    \begin{aligned}
 \|\mathcal M_\ell\one_A\|_{L^m(\eta_{m-1})}^m
 &=m\int_0^1\alpha^{m-1}
       \eta_{m-1}\{\mathcal M_\ell\one_A>\alpha\}\,d\alpha\\
 &\leq q^{-m}
       +C_{p,m}(\ell+1)^m\eta_m(A)
                         \int_{q^{-1}}^1\frac{d\alpha}{\alpha}\\
 &\leq C_{p,m}(\ell+1)^{m+1}\eta_m(A).
\end{aligned}
\end{equation*}
Here we used that $\eta_{m-1}$ is a probability measure,
$\log q=\ell\log p$, and $q^{-m}\leq\eta_m(A)$ for nonempty $A$.
The estimate is immediate when $A$ is empty. Thus
\begin{equation*}
 \|\mathcal M_\ell\one_A\|_{L^m(\eta_{m-1})}
 \leq C_{p,m}(\ell+1)^{1+1/m}\eta_m(A)^{1/m}.
\end{equation*}

For a general nonzero $g$, normalize
$\|g\|_{L^m(\eta_m)}=1$. Every point has $\eta_m$-mass
$q^{-m}$, so $\|g\|_\infty\leq q$.
Put $L=\lceil\log_2q\rceil$ and
\begin{equation*}
 A_j\defeq\{x:2^j<g(x)\leq2^{j+1}\},
 \qquad 0\leq j<L.
\end{equation*}
These sets are disjoint and satisfy
\begin{equation*}
 g\leq1+2\sum_{j=0}^{L-1}2^j\one_{A_j},
 \qquad
 \sum_{j=0}^{L-1}2^{jm}\eta_m(A_j)\leq1.
\end{equation*}
Sublinearity, the indicator-function estimate, and H\"older's
inequality in the finite sum imply
\begin{equation*}
\begin{aligned}
 \|\mathcal M_\ell g\|_{L^m(\eta_{m-1})}
 &\leq1+C_{p,m}(\ell+1)^{1+1/m}
                    \sum_{j=0}^{L-1}2^j\eta_m(A_j)^{1/m}\\
 &\leq1+C_{p,m}(\ell+1)^{1+1/m}L^{1-1/m}
       \left(\sum_{j=0}^{L-1}2^{jm}\eta_m(A_j)\right)^{1/m}\\
 &\leq C_{p,m}(\ell+1)^2,
\end{aligned}
\end{equation*}
since $L\leq C_p(\ell+1)$. By homogeneity, including the case
$g=0$, this proves
\begin{equation*}
 \|\mathcal M_\ell g\|_{L^m(\eta_{m-1})}
 \leq C_{p,m}(\ell+1)^2\|g\|_{L^m(\eta_m)}.
\end{equation*}

To pass to $\Z_p^m$, average $|f|$ over residue classes:
\begin{equation*}
 g(\bar x)\defeq q^m
       \int_{x+p^\ell\Z_p^m}|f(y)|\,dy,
 \qquad \bar x\in\mathcal R_\ell^m.
\end{equation*}
Jensen's inequality gives
$\|g\|_{L^m(\eta_m)}\leq\|f\|_{L^m(\Z_p^m)}$.
The tube in direction $[1:u]$ with intercept $w$ is the union
of the $q$ residue classes corresponding to
$(\bar t,\bar w+\bar t\bar u)$, with $\bar t\in\mathcal R_\ell$.
Its normalized integral is therefore
$q^{-1}\sum_{\bar t}g(\bar t,\bar w+\bar t\bar u)$.
Consequently,
\begin{equation*}
 \mathcal K_{p^{-\ell}}f(u)=\mathcal M_\ell g(\bar u),
 \qquad
 \|\mathcal K_{p^{-\ell}}f\|_{L^m(\Z_p^{m-1})}
   =\|\mathcal M_\ell g\|_{L^m(\eta_{m-1})}.
\end{equation*}
The finite-ring estimate proves
\eqref{eq:projection-tube-maximal}. Finally,
$(\ell+1)^2\leq C_{p,\varepsilon}p^{\varepsilon\ell}$ gives
the stated $\delta^{-\varepsilon}$ form.
\end{proof}

\subsection*{Projection incidence and exceptional parameters}
We now prove the projection--incidence theorem. The bound for the
union follows from Theorem~\ref{thm:Qp-lines}. For individual
slices, we choose measures as above and use duality to estimate an
averaged moment of their small-ball masses. In integral coordinates,
this moment is constant on parameter balls of the same radius.
The moment estimate therefore bounds the number of parameter balls
where the mass is too large. We shall show that the resulting
limsup set contains all exceptional parameters.

For related weighted-direction and duality arguments over
Euclidean spaces, see Mitsis \cite[Theorem~3.1]{Mitsis} and
Gauvan \cite[Theorem~5]{Gauvan}. In particular, Gauvan compares
maximal estimates for the ambient direction measure with estimates
for a Frostman measure. Here the required loss is obtained by
averaging the slope measure over residue classes. The use of
projected measures for affine-family slices is also related to
D.~M.~Oberlin's adjoint-transform formulation
\cite[Section~3]{Oberlin}. We do not invoke these Euclidean
estimates in the proof. The maximal input is
Lemma~\ref{lem:projection-tube-maximal}.

\Needspace{6\baselineskip}
\begin{theorem}[Projection--incidence and exceptional parameters]
\label{thm:quantitative-line-slicing}
Let $Y\subset\Q_p^{m-1}\times\Q_p^{m-1}$ be nonempty and analytic, where $m\geq2$.
Define the slope set and the slices by
\begin{equation*}
 D\defeq \{u:\exists w,\ (u,w)\in Y\},\qquad
 H_t\defeq\{w+tu:(u,w)\in Y\},\qquad s\defeq\dimH D.
\end{equation*}
The incidence set
\begin{equation*}
 \mathcal I(Y)\defeq \{(t,w+tu):t\in\Q_p,\ (u,w)\in Y\}\subset\Q_p^m
\end{equation*}
satisfies
\begin{equation*}
 \dimH\mathcal I(Y)\geq s+1.
\end{equation*}
Moreover,
\begin{equation*}
 \dimH H_t\geq s
 \quad\text{for Haar-almost every }t\in\Q_p.
\end{equation*}
For every $0\leq\sigma<s$,
\begin{equation}\label{eq:quantitative-line-slicing}
 \dimH\{t\in\Q_p:\dimH H_t\leq\sigma\}
 \leq1-\frac{s-\sigma}{m-1}.
\end{equation}
\end{theorem}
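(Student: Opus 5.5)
The plan has three parts: the incidence bound from Theorem~\ref{thm:Qp-lines}, a Frostman measure on slopes lifted to the line family, and an averaged small-ball estimate for the slice measures driven by Lemma~\ref{lem:projection-tube-maximal}, converted into an exceptional-set bound by a limsup covering.

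\emph{Incidence bound.} For $(u,w)\in Y$ the set $\mathcal I(Y)$ contains the segment $\{(t,w+tu):t\in\Z_p\}$. It is nondegenerate with direction $[1:u]\in\PP^{m-1}(\Q_p)$. In the affine chart where the first homogeneous coordinate is $1$, the map $u\mapsto[1:u]$ is locally bi-Lipschitz, so the direction set has dimension $s$. Theorem~\ref{thm:Qp-lines} then gives $\dimH\mathcal I(Y)\geq s+1$ at once.

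\emph{Measures on slices.} Fix $\sigma<\tau<s$. By countable stability, choose bounded balls so that the part of $Y$ lying over them is analytic and has slope set of dimension $>\tau$. After an affine rescaling, which changes the parameter $t$ only by a similarity, we may assume $Y\subset\Z_p^{m-1}\times\Z_p^{m-1}$ and it suffices to study $t\in\Z_p$. Lemma~\ref{lem:projection-frostman} gives a $\tau$-Frostman probability $\nu$ on a compact subset of the slopes. Theorem~\ref{thm:projection-selection} lifts it to $\lambda$ on $Y$ with slope marginal $\nu$. Put $\mu_t\defeq((u,w)\mapsto w+tu)_*\lambda$, which is carried by $H_t$. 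By the mass distribution principle it suffices to show that, outside a small set of $t$, $\mu_t(B(x,\delta))\lesssim\delta^{\sigma}$ for all $x$ and all small $\delta=p^{-\ell}$. In integral coordinates, $\mu_t(B(x,\delta))$ depends on $t$ only modulo $p^\ell$. So at scale $\delta$ the bad parameters $T_\ell\defeq\{t:\exists x,\ \mu_t(B(x,\delta))>\delta^\sigma\}$ form a union of $N_\ell$ parameter balls of radius $\delta$.

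\emph{Counting bad parameter balls.} By duality, let $A\subset\Z_p^m$ be the union of the sets $\{t'\equiv t\}\times B(x,\delta)$ over bad classes $t$ and heavy balls $B(x,\delta)$ of $\mu_t$. Then
\begin{equation*}
 N_\ell\,\delta^{1+\sigma}\;\leq\;\int\!\!\int_{\Z_p}\one_A(t,w+tu)\,dt\,d\lambda(u,w)\;\leq\;\delta^{m-1}\!\int \mathcal K_\delta\one_A\,d\nu .
\end{equation*}
Since $\mathcal K_\delta\one_A$ is constant on $\delta$-balls of slopes, we may replace $\nu$ by its $\delta$-average $\nu_\delta$. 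That average has density at most $C\delta^{\tau-(m-1)}$, so H\"older gives $\|\nu_\delta\|_{L^{m'}}\lesssim\delta^{(\tau-(m-1))/m}$. Combining this with Lemma~\ref{lem:projection-tube-maximal} yields $N_\ell\delta^{1+\sigma}\lesssim\delta^{m-1-\varepsilon}|A|^{1/m}\delta^{(\tau-m+1)/m}$. For $|A|$, each bad class contributes at most $\delta$ in $t$ times the measure of the heavy balls.

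\emph{Main obstacle: the exponent.} Bounding the number of heavy balls crudely by $\delta^{-\sigma}$ loses too much. I would instead use a single heavy ball per bad class, or weight $f$ by $\mu_t$-mass via a dyadic pigeonholing of mass levels, at a cost of $O(\ell)$ losses. The goal is $N_\ell\lesssim\delta^{-\beta-\varepsilon}$ with $\beta=1-\frac{\tau-\sigma}{m-1}$. Getting this exponent exactly, rather than a worse one, is where I expect the real work.

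\emph{Conclusion.} The exceptional set is contained in $\limsup_\ell T_\ell$. Summing $N_\ell\,p^{-\ell\gamma}$ for any $\gamma>\beta$ shows $\HH^\gamma(\limsup_\ell T_\ell)=0$. Letting $\tau\uparrow s$ gives \eqref{eq:quantitative-line-slicing}. Taking $\sigma\uparrow s$ along a sequence makes the bound $<1$, so $\{t:\dimH H_t<s\}$ is Haar-null.
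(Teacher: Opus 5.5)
Your incidence bound, your localization, and your Frostman-plus-selection construction of $\mu_t$ all match the paper. The step you flag as the main obstacle, however, is a genuine gap: with the bad event $T_\ell=\{t:\exists x,\ \mu_t(B(x,\delta))>\delta^\sigma\}$ the stated exponent cannot be reached.

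\emph{A correction first.} Your duality display carries a spurious factor $\delta^{m-1}$. Because $A$ is a union of $\delta$-cells, $\int_{\Z_p}\one_A(t,w+tu)\,dt\le\mathcal K_\delta\one_A(u)$ with no extra factor.

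\emph{Why the sup-type event fails.} Done correctly, one heavy ball per bad class gives $|A|=N_\ell\delta^m$ and
\[
N_\ell\delta^{1+\sigma}\lesssim\delta^{-\varepsilon}(N_\ell\delta^m)^{1/m}\delta^{-(m-1-\tau)/m},
\]
that is, $N_\ell\lesssim\delta^{-(1-(\tau-m\sigma)/(m-1))-\varepsilon}$. The threshold $\sigma$ enters multiplied by $m$. Dyadic pigeonholing of mass levels cannot repair this, because a bad parameter may carry a single ball of mass $\delta^\sigma$ and nothing else concentrated. Write $G_\delta(t,z)=\delta^{-(m-1)}\mu_t(B(z,\delta))$. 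Such a class contributes only $\delta^m(\delta^{\sigma-(m-1)})^{m'}$ to $\|G_\delta\|_{L^{m'}}^{m'}$. The $L^{m'}$ bound on $G_\delta$ is exactly what the $L^m$ tube maximal inequality supplies by duality, so any refinement of $f$ reproduces the same count. Consequently your estimate is nontrivial only for $\sigma<\tau/m$. Your final step, letting $\sigma\uparrow s$, would then give only $\dimH H_t\ge s/m$ almost everywhere, not $\dimH H_t\ge s$.

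\emph{The missing idea.} Define the bad event by a $\mu_t$-average rather than a supremum over centres. The paper uses the moment
\[
F_\delta(t)=\int\mu_t(B(z,\delta))^{1/(m-1)}\,d\mu_t(z)=\sum_B\mu_t(B)^{m'},
\]
for which $\int_{\Z_p}F_\delta\,dt=\delta\|G_\delta\|_{L^{m'}}^{m'}\lesssim(\ell+1)^{2m'}\delta^{\tau/(m-1)}$. Declare $t$ bad when $F_\delta(t)>\delta^{\beta/(m-1)}$, with $\sigma<\beta<\tau$. Markov then gives at most $(\ell+1)^{2m'}\delta^{-1+(\tau-\beta)/(m-1)}$ bad classes up to a constant, which is the correct exponent.

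A good $t$ no longer has a uniform Frostman bound. Instead, for $\gamma<\beta$ one has $\mu_t\{z:\mu_t(B(z,\delta))>\delta^\gamma\}\le\delta^{(\beta-\gamma)/(m-1)}$, which is summable over $\delta=p^{-\ell}$. Borel--Cantelli in $z$ then gives the local mass bound at all small scales for $\mu_t$-a.e.\ $z$. Restrict $\mu_t$ to a positive-measure set $K_J$ on which the bound holds for every $\ell\ge J$; ultrametricity lets any ball meeting $K_J$ be recentred in $K_J$. This yields a $\gamma$-Frostman measure on $H_t$. Replacing uniform mass bounds by typical ones is what turns the $L^{m'}$ information into the exponent $1-(s-\sigma)/(m-1)$.
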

\begin{proof}
We first prove the incidence bound. For every $u\in D$, choose
$w$ such that $(u,w)\in Y$. The line
$\{(t,w+tu):t\in\Q_p\}$ lies in $\mathcal I(Y)$ and has direction
$[1:u]$. On bounded slope sets, $u\mapsto[1:u]$ is bi-Lipschitz onto its
image. Countable localization therefore gives
$\dimH\{[1:u]:u\in D\}=s$, and Theorem~\ref{thm:Qp-lines} implies
$\dimH\mathcal I(Y)\geq s+1$.
It remains to prove the assertions for individual slices.
We first work with an integral family and integral parameters,
and then remove these restrictions.

\emph{Slice measures in the integral model.}
Suppose that $Y\subset\Z_p^{m-1}\times\Z_p^{m-1}$ and
$t\in\Z_p$. If $s=0$, every $H_t$ is nonempty and the
conclusion is immediate. There are also no exceptional sets
to consider. We may therefore assume that $s>0$.
Fix $0<\tau<s$.
The set $D$ is analytic, being a coordinate projection of $Y$.
Lemma~\ref{lem:projection-frostman} supplies a probability measure $\nu$
carried by a compact subset of $D$ such that
\begin{equation*}
 \nu(B(u,\delta))\leq C_\nu\delta^\tau,
 \qquad \delta\defeq p^{-\ell},\quad\ell\geq1.
\end{equation*}
By Theorem~\ref{thm:projection-selection}, choose a Borel probability
measure $\lambda$ carried by $Y$ whose slope marginal is $\nu$.
Thus the slope distribution of $\lambda$ satisfies the preceding
Frostman bound. Define the slice measures and their averaged
densities by
\begin{equation*}
 \mu_t\defeq ((u,w)\mapsto w+tu)_*\lambda,
 \qquad
 G_\delta(t,z)\defeq \delta^{-(m-1)}\mu_t(B(z,\delta)).
\end{equation*}
For every $t$, the measure $\mu_t$ is a probability measure
carried by $H_t$. The density $G_\delta(t,\cdot)$ distributes
the mass of each residue class uniformly over that class.
We also average the slope measure at scale $\delta$ by setting
\begin{equation*}
 \nu_\delta(u)\defeq \delta^{-(m-1)}\nu(B(u,\delta)).
\end{equation*}
Every ball of radius $\delta$ in $\Z_p^{m-1}$ has Haar measure
$\delta^{m-1}$, so Tonelli's theorem gives
\begin{equation*}
 \int\nu_\delta(u)\,du
 =\delta^{-(m-1)}\int\!\int
       \one_{\{\|u-v\|\leq\delta\}}\,du\,d\nu(v)=1.
\end{equation*}
The Frostman estimate gives
$\|\nu_\delta\|_\infty\leq C_\nu\delta^{-(m-1-\tau)}$.
For $m'\defeq m/(m-1)$, the conjugate exponent to $m$, it follows that
\begin{align*}
 \|\nu_\delta\|_{L^{m'}}^{m'}
 &\leq\|\nu_\delta\|_\infty^{m'-1}\|\nu_\delta\|_1\notag\\
 &\leq C_\nu^{m'-1}
          \delta^{-(m-1-\tau)(m'-1)}.
\end{align*}
Since $(m'-1)/m'=1/m$, taking the $m'$th root yields
\begin{equation}\label{eq:projection-marginal-norm}
 \|\nu_\delta\|_{L^{m'}(\Z_p^{m-1})}
 \leq C\delta^{-(m-1-\tau)/m}.
\end{equation}

\emph{Duality and the averaged mass estimate.}
We now estimate the norm of $G_\delta$. Let $f\geq0$ belong
to $L^m(\Z_p^m)$. By the definition of $\mu_t$ and Tonelli's
theorem, we have
\begin{align*}
 \int_{\Z_p}\int_{\Z_p^{m-1}}f(t,z)G_\delta(t,z)\,dz\,dt
 &=\int_Y\int_{\Z_p}\delta^{-(m-1)}
       \int_{B(w+tu,\delta)}f(t,z)\,dz\,dt\,d\lambda(u,w)\notag\\
 &\leq\int_{\Z_p^{m-1}}\mathcal K_\delta f(u)\,d\nu(u).
\end{align*}
If $\|u-u'\|\leq\delta$, then
$\|tu-tu'\|\leq\delta$ for every $t\in\Z_p$. Equal-radius balls
with these centers coincide. Hence $\mathcal K_\delta f$ is constant on every slope ball
of radius $\delta$. It follows from the definition of
$\nu_\delta$ that
\begin{equation*}
 \int\mathcal K_\delta f\,d\nu
 =\int\mathcal K_\delta f(u)\nu_\delta(u)\,du.
\end{equation*}
H\"older's inequality, Lemma~\ref{lem:projection-tube-maximal}, and
\eqref{eq:projection-marginal-norm} now give
\begin{align*}
 \int fG_\delta
 &\leq\|\mathcal K_\delta f\|_{L^m}\|\nu_\delta\|_{L^{m'}}\notag\\
 &\leq C(\ell+1)^2\delta^{-(m-1-\tau)/m}\|f\|_{L^m}.
\end{align*}
Taking the supremum over all nonnegative $f$ with $L^m$ norm
at most one, and applying $L^m$--$L^{m'}$ duality, we obtain
\begin{equation}\label{eq:projection-dual-density}
 \|G_\delta\|_{L^{m'}(\Z_p^m)}
 \leq C(\ell+1)^2\delta^{-(m-1-\tau)/m}.
\end{equation}
All constants here may depend on $\nu$, but they are independent of
$\ell$.

To use this norm bound for the slice measures, define
\begin{equation}\label{eq:projection-fractional-mass}
 F_\delta(t)\defeq \int\mu_t(B(z,\delta))^{1/(m-1)}\,d\mu_t(z).
\end{equation}
Let $\mathcal B_\delta$ be the finite partition of $\Z_p^{m-1}$ into
balls of radius $\delta$, and write $m_B(t)\defeq \mu_t(B)$.
For $z\in B$, the ball $B(z,\delta)$ is exactly $B$, so
\begin{equation*}
 F_\delta(t)
 =\sum_{B\in\mathcal B_\delta}m_B(t)^{1+1/(m-1)}
 =\sum_{B\in\mathcal B_\delta}m_B(t)^{m'}.
\end{equation*}
This is the order-$m'$ moment of the masses of the partition
cells. Related projection results for dimension spectra were
studied by Hunt and Kaloshin \cite{HK}. In the present argument,
we can estimate this moment directly from the preceding norm bound.
Indeed,
\begin{align*}
 \int_{\Z_p^{m-1}}G_\delta(t,z)^{m'}\,dz
 &=\sum_{B\in\mathcal B_\delta}
       \delta^{m-1}\bigl(\delta^{-(m-1)}m_B(t)\bigr)^{m'}\notag\\
 &=\delta^{(m-1)(1-m')}\sum_{B\in\mathcal B_\delta}m_B(t)^{m'}
 =\delta^{-1}F_\delta(t).
\end{align*}
Thus \eqref{eq:projection-dual-density} implies
\begin{equation}\label{eq:projection-averaged-mass}
\begin{aligned}
 \int_{\Z_p}F_\delta(t)\,dt
 &=\delta\|G_\delta\|_{L^{m'}}^{m'}\\
 &\leq C(\ell+1)^{2m'}
       \delta^{\,1-(m-1-\tau)m'/m}\\
 &=C(\ell+1)^{2m'}\delta^{\tau/(m-1)}.
\end{aligned}
\end{equation}

\emph{Covers of the exceptional parameters.}
Fix $0\leq\sigma<\beta<\tau$, and put $\delta_\ell\defeq p^{-\ell}$.
Let the set of parameters where the moment exceeds the desired
bound be
\begin{equation*}
 A_\ell(\beta)\defeq \{t\in\Z_p:
       F_{\delta_\ell}(t)>\delta_\ell^{\beta/(m-1)}\}.
\end{equation*}
By Markov's inequality and \eqref{eq:projection-averaged-mass},
\begin{align*}
 \mathcal L^1(A_\ell(\beta))
 &\leq\delta_\ell^{-\beta/(m-1)}
                        \int_{\Z_p}F_{\delta_\ell}(t)\,dt\notag\\
 &\leq C(\ell+1)^{2m'}
                  \delta_\ell^{(\tau-\beta)/(m-1)}.
\end{align*}
We claim that $A_\ell(\beta)$ is a union of parameter residue
classes of radius $\delta_\ell$. To see this, suppose that
$|t-t'|_p\leq\delta_\ell$. Then
\begin{equation*}
 \|(w+tu)-(w+t'u)\|
 =|t-t'|_p\|u\|\leq\delta_\ell
 \qquad ((u,w)\in Y).
\end{equation*}
Therefore the two evaluations lie in the same spatial residue class,
$\mu_t(B)=\mu_{t'}(B)$ for every $B\in\mathcal B_{\delta_\ell}$,
and $F_{\delta_\ell}(t)=F_{\delta_\ell}(t')$.
This proves the claim. Each parameter ball has Haar measure
$\delta_\ell$, so the measure estimate above shows that the
number of balls in $A_\ell(\beta)$ is at most
\begin{equation}\label{eq:projection-bad-cover}
 C(\ell+1)^{2m'}
       \delta_\ell^{-1+(\tau-\beta)/(m-1)}.
\end{equation}

Let
\begin{equation*}
 A(\beta)\defeq \limsup_{\ell\to\infty}A_\ell(\beta)
 =\bigcap_{\ell_0\geq1}\bigcup_{\ell\geq\ell_0}A_\ell(\beta).
\end{equation*}
For $\eta>0$, set $\alpha\defeq 1-(\tau-\beta)/(m-1)+\eta$.
The covers in \eqref{eq:projection-bad-cover}, with $\ell\geq\ell_0$,
cover $A(\beta)$ and have total $\alpha$-dimensional cost at most
\begin{align*}
 C\sum_{\ell\geq\ell_0}(\ell+1)^{2m'}
       \delta_\ell^{-1+(\tau-\beta)/(m-1)}\delta_\ell^\alpha
 &=C\sum_{\ell\geq\ell_0}(\ell+1)^{2m'}p^{-\ell\eta}.
\end{align*}
The series converges. As $\ell_0\to\infty$, both the total
cost and the largest covering radius tend to zero. Therefore
$\HH^\alpha(A(\beta))=0$. Letting $\eta\downarrow0$, we obtain
\begin{equation}\label{eq:projection-limsup-bound}
 \dimH A(\beta)\leq1-\frac{\tau-\beta}{m-1}.
\end{equation}

\emph{From mass decay to slice dimension.}
We now show that $\dimH H_t\geq\beta$ whenever
$t\notin A(\beta)$. Fix such a parameter. By the definition of
the limsup set, for every sufficiently large $\ell$ we have
\begin{equation*}
 F_{\delta_\ell}(t)\leq\delta_\ell^{\beta/(m-1)}.
\end{equation*}
Fix $0<\gamma<\beta$, and define
\begin{equation*}
 E_\ell\defeq \{z:\mu_t(B(z,\delta_\ell))>\delta_\ell^\gamma\}.
\end{equation*}
On $E_\ell$, the integrand in \eqref{eq:projection-fractional-mass}
is larger than $\delta_\ell^{\gamma/(m-1)}$. Hence
\begin{align*}
 \mu_t(E_\ell)
 &\leq\delta_\ell^{-\gamma/(m-1)}F_{\delta_\ell}(t)\notag\\
 &\leq\delta_\ell^{(\beta-\gamma)/(m-1)}.
\end{align*}
Since the right-hand side is summable in $\ell$, Borel--Cantelli
implies that $\mu_t$-almost every $z$ belongs to only finitely
many sets $E_\ell$. Thus
\begin{equation*}
 \mu_t(B(z,p^{-\ell}))\leq p^{-\ell\gamma}
 \quad\text{for all sufficiently large $\ell$, for $\mu_t$-almost every $z$}.
\end{equation*}

To obtain the Hausdorff dimension bound, we restrict to a set
where this estimate holds uniformly. For $J\geq1$, define
\begin{equation*}
 K_J\defeq \bigcap_{\ell\geq J}
       \{z\in\Z_p^{m-1}:\mu_t(B(z,p^{-\ell}))\leq p^{-\ell\gamma}\}.
\end{equation*}
These sets are Borel, and $\mu_t(\bigcup_JK_J)=1$.
Choose $J$ with $\mu_t(K_J)>0$, and define a Borel probability measure
\begin{equation*}
 \widetilde\mu(A)\defeq \frac{\mu_t(A\cap K_J)}{\mu_t(K_J)}.
\end{equation*}
It is still carried by $H_t$. If a ball $B(x,p^{-\ell})$, $\ell\geq J$,
meets $K_J$, choose $z$ in the intersection. The ultrametric property
gives $B(x,p^{-\ell})=B(z,p^{-\ell})$, and therefore
\begin{equation*}
 \widetilde\mu(B(x,p^{-\ell}))
 \leq\frac{p^{-\ell\gamma}}{\mu_t(K_J)}.
\end{equation*}
The same bound is trivial when the ball misses $K_J$. For an arbitrary
small radius $\rho$, choose $\ell\geq J$ with
$p^{-\ell}\leq\rho<p^{-\ell+1}$. The discrete set of distances gives
$B(x,\rho)=B(x,p^{-\ell})$, and hence
\begin{equation*}
 \widetilde\mu(B(x,\rho))
 \leq\frac{p^{-\ell\gamma}}{\mu_t(K_J)}
 \leq\frac{\rho^\gamma}{\mu_t(K_J)}.
\end{equation*}
We have therefore obtained a Frostman bound for every sufficiently
small radius. If $\{B_i\}$ is any small-ball cover of $H_t$, then
\begin{equation*}
 1\leq\sum_i\widetilde\mu(B_i)
 \leq\frac1{\mu_t(K_J)}\sum_i(\operatorname{rad}B_i)^\gamma.
\end{equation*}
It follows that $\HH^\gamma(H_t)>0$ and hence
$\dimH H_t\geq\gamma$. Letting $\gamma\uparrow\beta$
proves $\dimH H_t\geq\beta$, as claimed. Consequently,
\begin{equation*}
 \{t\in\Z_p:\dimH H_t\leq\sigma\}\subset A(\beta),
\end{equation*}
because $\sigma<\beta$. By \eqref{eq:projection-limsup-bound},
\begin{equation*}
 \dimH\{t\in\Z_p:\dimH H_t\leq\sigma\}
 \leq1-\frac{\tau-\beta}{m-1}.
\end{equation*}
Letting first $\beta\downarrow\sigma$ and then $\tau\uparrow s$,
we obtain \eqref{eq:quantitative-line-slicing} for the integral
family with parameters in $\Z_p$.

\emph{Localization and the almost-everywhere lower bound.}
It remains to treat an arbitrary analytic family and all
parameters in $\Q_p$. For each integer $A_0\geq0$, define
\begin{equation*}
 Y_{A_0}\defeq Y\cap\{(u,w):\|u\|,\|w\|\leq p^{A_0}\},
 \qquad D_{A_0}\defeq \{u:\exists w,\ (u,w)\in Y_{A_0}\}.
\end{equation*}
Both $Y_{A_0}$ and $D_{A_0}$ are analytic. Since $D=\bigcup_{A_0\geq0}D_{A_0}$,
countable stability gives $s=\sup_{A_0}\dimH D_{A_0}$. Given
$0\leq\sigma<\tau<s$, choose $A_0$ with $s_{A_0}\defeq \dimH D_{A_0}>\tau$.
The slices of $Y_{A_0}$ are contained in the corresponding
slices of $Y$. Thus, if an original slice has dimension at most
$\sigma$, the same is true for its $Y_{A_0}$-slice. It is
therefore enough to estimate the exceptional parameters of
$Y_{A_0}$.

Fix a parameter ball $p^{-A}\Z_p$, where $A\geq0$ is an integer, and write
$t_0\defeq p^At\in\Z_p$, so that $t=p^{-A}t_0$.
Choose an integer $\kappa\geq A+A_0$, and
make the invertible linear change
\begin{equation*}
 u'\defeq p^{\kappa-A}u,\qquad w'\defeq p^{\kappa}w.
\end{equation*}
For $(u,w)\in Y_{A_0}$,
\begin{equation*}
 \|u'\|\leq p^{-\kappa+A+A_0}\leq1,\qquad
 \|w'\|\leq p^{-\kappa+A_0}\leq1.
\end{equation*}
The transformed family is integral, and the invertible linear
change preserves the slope dimension $s_{A_0}$. At parameter
$t_0$, its slice is
\begin{equation*}
 \{w'+t_0u':(u,w)\in Y_{A_0}\}
 =p^{\kappa} H^{Y_{A_0}}_{p^{-A}t_0},
\end{equation*}
where $H^{Y_{A_0}}$ denotes the slices formed using only $Y_{A_0}$.
Applying the integral result, and using that scalar similarities
preserve Hausdorff dimension, gives
\begin{align*}
 \dimH\{t\in p^{-A}\Z_p:\dimH H_t\leq\sigma\}
 &\leq1-\frac{s_{A_0}-\sigma}{m-1}\notag\\
 &\leq1-\frac{\tau-\sigma}{m-1}.
\end{align*}
Now let $\tau\uparrow s$ and use
$\Q_p=\bigcup_{A\geq0}p^{-A}\Z_p$ to obtain the global bound.

Finally, if $s>0$, choose $0\leq\sigma_i<s$ with $\sigma_i\uparrow s$.
The set of parameters with $\dimH H_t<s$ is the countable union of
$\{t:\dimH H_t\leq\sigma_i\}$. Each has dimension strictly less than
one by \eqref{eq:quantitative-line-slicing}, and hence Haar outer
measure zero: on $\Q_p$, a ball of radius $p^{-\ell}$ has Haar
measure $p^{-\ell}$. Their countable union is Haar-null, which proves the
almost-everywhere assertion. The case $s=0$ is immediate because
the slices are nonempty. This completes the proof.
\end{proof}

\section{Polynomial evaluation images}
\label{sec:polynomial-projections}

In this section, $P_r$ is the family \eqref{eq:projection-family},
with its fixed output dimension, degree, and scalar weights. We first prove the total-incidence
bound using the curved Kakeya theorem. We then prove
Theorem~\ref{thm:polynomial-projections} by applying the
projection--incidence theorem to a lifted line family. Finally,
we translate the parameter to obtain the arbitrary-reference
comparison and the almost-everywhere maximal dimension statement.

\subsection*{Total incidence and the reference image}
For $\mathcal E\subset(\Q_p^n)^{d+1}$ and $U\subset\Q_p$, define
the polynomial incidence set over $U$ by
\begin{equation*}
 \mathcal I_U(\mathcal E)
 \defeq \{(r,P_r(v)):r\in U,\ v\in\mathcal E\}.
\end{equation*}
For $r\in U$, its slice is $P_r(\mathcal E)$. In particular, write
\begin{equation*}
 \mathcal I^\times(\mathcal E)
 \defeq \mathcal I_{\Q_p^\times}(\mathcal E)
 =\{(r,y):r\in\Q_p^\times,\ y\in P_r(\mathcal E)\}.
\end{equation*}
We give a lower bound for this incidence set in terms of one
reference image. The upper bound follows from the dimension of
the coefficient set. For the lower bound, we reverse the polynomial
and apply Theorem~\ref{thm:padic}.

\begin{proposition}[Incidence dimension from one evaluation]
\label{prop:polynomial-total-incidence}
Fix the family \eqref{eq:projection-family}, and let
$\mathcal E\subset(\Q_p^n)^{d+1}$ be any nonempty set. Then
\begin{equation*}
 \dimH P_0(\mathcal E)+1
 \leq\dimH\mathcal I^\times(\mathcal E)
 \leq\min\{\dimH\mathcal E+1,n+1\}.
\end{equation*}
In particular, for every $0\leq S\leq n$,
\begin{equation}\label{eq:polynomial-incidence-threshold}
 \dimH P_0(\mathcal E)\geq S
 \quad\Longrightarrow\quad
 \dimH\mathcal I^\times(\mathcal E)\geq S+1.
\end{equation}
\end{proposition}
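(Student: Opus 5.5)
The upper bound comes from parametrizing the incidence set. The lower bound comes from reversing the polynomial and applying Theorem~\ref{thm:padic} with $d+1$ in place of $d$.

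\emph{Upper bound.} Since $\mathcal I^\times(\mathcal E)\subset\Q_p^{n+1}$, we have $\dimH\mathcal I^\times(\mathcal E)\leq n+1$. Next consider the map
\begin{equation*}
 \Q_p^\times\times\mathcal E\to\mathcal I^\times(\mathcal E),\qquad (r,v)\mapsto(r,P_r(v)).
\end{equation*}
It is surjective. On every bounded box with $|r|_p\leq R$ it is Lipschitz, because $P_r(v)$ is polynomial in $(r,v)$; the ultrametric estimate is the one used in Lemma~\ref{lem:lift-dimension}. By countable stability, $\dimH\mathcal I^\times(\mathcal E)\leq\dimH(\Q_p\times\mathcal E)$. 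For an arbitrary set we cannot claim product equality, but the upper bound $\dimH(\Q_p\times\mathcal E)\leq\dimH\mathcal E+1$ holds. It follows from the covering half of the proof of Lemma~\ref{lem:products}, where $\Z_p$ is partitioned into $r_i^{-1}$ balls, together with countable stability over the cosets of $\Z_p$ in $\Q_p$.

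\emph{Lower bound.} Substitute $r=1/t$ with $t\in\Q_p^\times$. Then
\begin{equation*}
 t^dP_{1/t}(v)=\sum_{j=0}^da_jt^{d-j}v_j,
\end{equation*}
a degree-$d$ polynomial in $t$ whose top coefficient is $a_0v_0=P_0(v)$. Since $a_0$ may vanish, I adjoin a scalar monomial to force a nonzero leading coefficient. Define the map
\begin{equation*}
 \Theta(r,y)\defeq(r^{-1},\,r^{-d}y,\,r^{-(d+1)}),\qquad \Theta:\Q_p^\times\times\Q_p^n\to\Q_p^{n+2}.
\end{equation*}
It is injective, and it is locally bi-Lipschitz on the annuli $|r|_p=p^k$ with $y$ bounded. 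On such a region its inverse $(t,x,c)\mapsto(t^{-1},t^{-d}x)$ is also Lipschitz, so $\dimH\Theta(\mathcal I^\times(\mathcal E))=\dimH\mathcal I^\times(\mathcal E)$ by countable stability. Fix $v\in\mathcal E$ and set
\begin{equation*}
 \gamma_v(t)\defeq\Bigl(t,\ \sum_{j=0}^da_jt^{d-j}v_j,\ t^{d+1}\Bigr),
\end{equation*}
a polynomial map of degree $d+1$ with leading vector $(0,0,1)$. This leading direction is the same for every $v$, so on its own it gives nothing. The leading direction must therefore carry $P_0(v)$, and the cleaner choice is to drop the monomial and use degree $d$: the polynomial $t\mapsto(t,\,t^dP_{1/t}(v))$ has degree $d$ and leading vector $(0,a_0v_0)$. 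Its image lies in $\Theta'(\mathcal I^\times(\mathcal E))\cup\{t=0\}$, where $\Theta'(r,y)=(r^{-1},r^{-d}y)$. As $v$ ranges over $\mathcal E$, the leading directions $[0:P_0(v)]$ range over a set of dimension $\dimH P_0(\mathcal E)-1$, which is one too few.

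The fix is to use the adjoined scalar monomial to record the scale of $P_0(v)$. Work with the curves $\gamma_v(t)=(t^{d}\cdot c,\ t^dP_{1/t}(v))$, or more precisely use leading vectors $(1,P_0(v))$ obtained by adding $t^d$ in an auxiliary coordinate $c=r^{-d}$. The leading directions $[1:P_0(v)]$ are then parametrized bi-Lipschitzly by $P_0(v)$ in the affine chart, giving $\dimH V=\dimH P_0(\mathcal E)$. The curves lie in the image of $\mathcal I^\times(\mathcal E)$ under the locally bi-Lipschitz injection $(r,y)\mapsto(r^{-d},r^{-d}y)$ on each annulus, plus the single point $t=0$. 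Some care is needed here: $r\mapsto r^{-d}$ is finite-to-one, so I would keep an extra coordinate $r^{-1}$ to retain injectivity.

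Theorem~\ref{thm:padic} in ambient dimension $n+1$ (or $n+2$) then gives dimension at least $\dimH P_0(\mathcal E)+1$ for the union of the images, and the extra point at $t=0$ does not affect dimension. The main obstacle is exactly this bookkeeping: the embedding must be injective and locally bi-Lipschitz, and the chosen leading vectors must vary with $P_0(v)$ inside a single projective chart. The threshold statement \eqref{eq:polynomial-incidence-threshold} is then immediate.
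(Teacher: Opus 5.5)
Your upper bound is fine. Your final lower-bound strategy is the paper's: reverse the polynomial, adjoin a scalar monomial so the leading direction becomes $[1:P_0(v)]$, apply Theorem~\ref{thm:padic}, and pull back by a map that is Lipschitz on annuli.

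There is, however, a concrete gap in the curves you settle on. The polynomial $t^dP_{1/t}(v)=\sum_{j=0}^d a_jt^{d-j}v_j$ has constant term $a_dv_d$. So the degree-$d$ curve $t\mapsto(t^d,\,t^dP_{1/t}(v))$, with or without the extra coordinate $t$, passes through $(0,a_dv_d)$ at $t=0$, and this point depends on $v$. The union of the images $\gamma_v(\Z_p)$ is therefore not ``the image of $\mathcal I^\times(\mathcal E)$ plus a single point.'' It also contains $\{0\}\times\{a_dv_d:v\in\mathcal E\}$, which can have dimension as large as $n$.

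Theorem~\ref{thm:padic} bounds only the dimension of the whole union, and that bound can be carried entirely by this extra slice. For instance, take $n=2$, $d=1$, $a_0=a_1=1$ and $\mathcal E=\{0\}\times\Z_p^2$. Your curves are $t\mapsto(t,v_1)$, all with leading direction $[1:0:0]$. The slice $\{0\}\times\Z_p^2$ alone has dimension $2\geq\dimH P_0(\mathcal E)+1$, so the lower bound from Theorem~\ref{thm:padic} gives no information about $\mathcal I^\times(\mathcal E)$.

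The missing idea is to multiply by one more factor of $t$, as the paper does. The curve $\Gamma_v(t)=\bigl(\sum_{j=0}^da_jv_jt^{d+1-j},\,t^{d+1}\bigr)=\bigl(t^{d+1}P_{1/t}(v),\,t^{d+1}\bigr)$ has degree $d+1$, leading vector $(a_0v_0,1)$, and no constant term. Hence $\Gamma_v(0)=0$ for every $v$, and the union is exactly $\{0\}\cup\Theta(\mathcal I^\times(\mathcal E))$ with $\Theta(r,y)=(r^{-(d+1)}y,r^{-(d+1)})$.

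Alternatively, you could keep degree $d$ and substitute $t=1+pu$ with $u\in\Z_p$. This multiplies the leading vector by $p^d$, preserves its direction, and keeps $t$ in $\Z_p^\times$, so the curve images lie in the image of $\mathcal I^\times(\mathcal E)$.

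Injectivity and the bi-Lipschitz property are not needed. It suffices that $\Theta$ is Lipschitz on the countably many pieces where $|r|_p$ is fixed and $\|y\|$ is bounded. That gives $\dimH\Theta(\mathcal I^\times(\mathcal E))\leq\dimH\mathcal I^\times(\mathcal E)$, the only inequality used, so your extra coordinate $r^{-1}$ is superfluous. Likewise, your caveat in the upper bound is unnecessary: Lemma~\ref{lem:products} gives $\dimH(\mathcal E\times\Q_p^\times)=\dimH\mathcal E+1$ for every nonempty $\mathcal E$.
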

\begin{proof}
We begin with the lower bound. Let
$S_0\defeq \dimH P_0(\mathcal E)$. For each
$v=(v_0,\ldots,v_d)\in\mathcal E$, define
\begin{equation*}
 \Gamma_v(t)
 \defeq \left(\sum_{j=0}^d a_jv_jt^{d+1-j},\ t^{d+1}\right)
 \in\Q_p^{n+1},\qquad t\in\Q_p.
\end{equation*}
The last coordinate is $t^{d+1}$, so the map has degree exactly
$d+1$. Its leading vector is $(a_0v_0,1)$ and is nonzero for
every tuple. Thus the set of leading directions is
\begin{equation*}
 V_0\defeq \{[x:1]:x\in P_0(\mathcal E)\}\subset\PP^n(\Q_p),
 \qquad\dimH V_0=S_0.
\end{equation*}

Let $B\defeq \bigcup_{v\in\mathcal E}\Gamma_v(\Q_p)$.
For every direction in $V_0$, the set $B$ contains the image
$\Gamma_v(\Z_p)$ of a corresponding polynomial. By
Theorem~\ref{thm:padic}, applied in dimension $n+1$ and degree
$d+1$, we have
\begin{equation}\label{eq:augmented-incidence-lower}
 \dimH B\geq S_0+1.
\end{equation}

For $t\neq0$, we see that
\begin{equation*}
 \Gamma_v(t)=\bigl(t^{d+1}P_{1/t}(v),t^{d+1}\bigr),
 \qquad\Gamma_v(0)=0.
\end{equation*}
Consequently, for
\begin{equation*}
 \Theta:\Q_p^\times\times\Q_p^n\longrightarrow\Q_p^{n+1},
 \qquad\Theta(r,y)\defeq (r^{-(d+1)}y,r^{-(d+1)}),
\end{equation*}
we have the exact set identity
\begin{equation}\label{eq:augmented-incidence-image}
 B=\{0\}\cup\Theta\bigl(\mathcal I^\times(\mathcal E)\bigr).
\end{equation}
On each set where
$|r|_p$ is fixed and $\|y\|$ is bounded, the map $\Theta$ is
Lipschitz. These sets give a countable cover of the domain. Therefore
\eqref{eq:augmented-incidence-image} gives
$\dimH B\leq\dimH\mathcal I^\times(\mathcal E)$.
Combining this with \eqref{eq:augmented-incidence-lower} proves
the lower bound.

For the upper bound, the map
\begin{equation*}
 F:\mathcal E\times\Q_p^\times\longrightarrow\mathcal I^\times(\mathcal E),
 \qquad F(v,r)\defeq (r,P_r(v))
\end{equation*}
is surjective and Lipschitz on every bounded box. The full-factor lemma \eqref{eq:product-dimension} therefore give
\begin{equation*}
 \dimH\mathcal I^\times(\mathcal E)
 \leq\dimH(\mathcal E\times\Q_p^\times)
 =\dimH\mathcal E+1.
\end{equation*}
The incidence set is contained in $\Q_p^{n+1}$, so its dimension
is also at most $n+1$. Combining these two upper bounds completes
the proof.
\end{proof}

\begin{remark}[Coefficient-set dimension does not suffice]
\label{rem:coefficient-dimension-counterexample}
The condition on the reference image in
\eqref{eq:polynomial-incidence-threshold} cannot be replaced by
$\dimH\mathcal E\geq S$. This fails even when $\mathcal E$
is compact, all scalar coefficients are nonzero, and $S\leq n$.
To see this, take $n=2$, $d=1$, and $a_0=a_1=1$.
Let $e\defeq (1,0)\in\Q_p^2$, and define
\begin{equation*}
 \mathcal E\defeq \{(xe,ye):x,y\in\Z_p\}\subset(\Q_p^2)^2.
\end{equation*}
The map $(x,y)\mapsto(xe,ye)$ is an isometry, so
$\dimH\mathcal E=2$. If $\mathcal L\defeq\Q_pe$, then
\begin{equation*}
 P_r(\mathcal E)=(\Z_p+r\Z_p)e\subset\mathcal L
 \quad\text{for every }r\in\Q_p.
\end{equation*}
In fact, each of these images has dimension one, because it contains
$\Z_pe$ and is contained in $\mathcal L$. Moreover,
\begin{equation*}
 \Z_p^\times\times(\Z_pe)
 \subset\mathcal I^\times(\mathcal E)
 \subset\Q_p^\times\times\mathcal L.
\end{equation*}
Both bounding products have dimension two. Thus, with $S\defeq 2=n$,
\begin{equation*}
 \dimH\mathcal E=S=2,
 \qquad\dimH\mathcal I^\times(\mathcal E)=2<3=S+1.
\end{equation*}
Here $\dimH P_0(\mathcal E)=1$, so the lower bound in
Proposition~\ref{prop:polynomial-total-incidence} is attained. Here $S+1=3$ is the dimension of the ambient space
$\Q_p^{n+1}$, so the failure is not due to the ambient dimension bound.
It occurs because the two coefficient blocks vary independently
in one spatial line, while every evaluation still belongs to that
line.
\end{remark}

\subsection*{Polynomial reversal and individual slices}
We next prove Theorem~\ref{thm:polynomial-projections}.
The incidence bound alone does not give the dimension of individual
evaluation images. Instead, we lift the reversed polynomials to an
analytic family of lines. We shall compute both its slope set and
its slices, and then apply
Theorem~\ref{thm:quantitative-line-slicing}. 

\begin{proof}[Proof of Theorem~\ref{thm:polynomial-projections}]
If $S=0$, the lower bound is immediate because the evaluation
images are nonempty. There is also no $\sigma$ in the exceptional
range. We may therefore assume that $S>0$.

\emph{Reversal and the lifted slope set.}
We first include the fixed scalar weights in the coefficient
vectors. Define
\begin{equation*}
 \mathcal A\defeq \{(b_0,\ldots,b_d):
              b_j=a_jv_j,\ (v_0,\ldots,v_d)\in\mathcal E\}.
\end{equation*}
This is analytic because it is a continuous linear image of
$\mathcal E$. For $b\in\mathcal A$, write
$f_b(r)\defeq \sum_{j=0}^db_jr^j$. Then
\begin{equation*}
 \{f_b(r):b\in\mathcal A\}=P_r(\mathcal E),\qquad
 \{b_0:b\in\mathcal A\}=P_0(\mathcal E).
\end{equation*}
We reverse this polynomial as before so its leading coefficient is
the reference value $b_0$. Set
\begin{equation*}
 q_b(t)\defeq \sum_{j=0}^db_jt^{d-j}
 =t^df_b(1/t),\qquad t\neq0.
\end{equation*}

Choose free vectors $c_1,\ldots,c_{d-1}\in\Q_p^n$, and set
$c_0\defeq b_0$ and $c_d\defeq 0 \in \Q_p^n$. Define a slope and an intercept in $\Q_p^{dn}$ by
\begin{equation}\label{eq:projection-lifted-line}
 u\defeq (c_0,\ldots,c_{d-1}),\qquad
 w\defeq (b_1-c_1,\ldots,b_d-c_d).
\end{equation}
Let $Y$ be the set of all pairs $(u,w)$ arising from
$b\in\mathcal A$ and these free vectors. The set $Y$ is a continuous
image of $\mathcal A\times(\Q_p^n)^{d-1}$ and is therefore analytic.
It parametrizes affine lines
\begin{equation*}
 L_{b,c}(t)\defeq (t,w+tu)\in\Q_p^{dn+1}.
\end{equation*}
Observe that the first slope block is $b_0$, while the remaining
blocks are arbitrary. Every $b_0\in P_0(\mathcal E)$ is the
first coordinate of some $b\in\mathcal A$. Consequently, the
slope set is exactly
\begin{equation*}
 D=P_0(\mathcal E)\times(\Q_p^n)^{d-1},
 \qquad \dimH D=S+(d-1)n,
\end{equation*}
where the dimension identity is Lemma~\ref{lem:products}.
When $d=1$, there are no free blocks and $D=P_0(\mathcal E)$.

\emph{The full inverse-image identity.}
We now compute the slices
$H_t\defeq \{w+tu:(u,w)\in Y\}$. For a vector
$z=(z_0,\ldots,z_{d-1})\in(\Q_p^n)^d$, put
\begin{equation*}
 R_t(z)\defeq \sum_{j=0}^{d-1}t^{d-1-j}z_j.
\end{equation*}
The coefficient of $z_{d-1}$ is $1$, so $R_t$ is surjective for every
$t$. Moreover,
\begin{equation*}
 R_t(z)=\Phi(z_{d-1},z_{d-2},\ldots,z_0,t).
\end{equation*}
Thus $R_t$ is the slice map of the polynomial lift, with the
order of the vector blocks reversed.

By \eqref{eq:projection-lifted-line}, the $j$th block of $w+tu$
is $b_{j+1}-c_{j+1}+tc_j$. Substitution gives
\begin{equation*}
\begin{aligned}
 R_t(w+tu)
 &=\sum_{j=0}^{d-1}t^{d-1-j}
                  (b_{j+1}-c_{j+1}+tc_j)\\
 &=\sum_{i=1}^{d}t^{d-i}b_i
   -\sum_{i=1}^{d}t^{d-i}c_i
   +\sum_{i=0}^{d-1}t^{d-i}c_i\\
 &=\sum_{i=1}^{d}t^{d-i}b_i+t^dc_0-c_d\\
 &=b_0t^d+b_1t^{d-1}+\cdots+b_d=q_b(t).
\end{aligned}
\end{equation*}
The terms involving $c_1,\ldots,c_{d-1}$ cancel. Hence, for
fixed $b$, all the resulting slice points belong to
$R_t^{-1}(q_b(t))$.

It remains to prove the reverse inclusion. Let $z$ satisfy
$R_t(z)=q_b(t)$. Starting with $c_0\defeq b_0$, define
successively
\begin{equation*}
 c_{j+1}\defeq b_{j+1}+tc_j-z_j,\qquad 0\leq j\leq d-1.
\end{equation*}
Iteration gives
\begin{equation*}
 c_d=\sum_{i=0}^{d}t^{d-i}b_i
       -\sum_{j=0}^{d-1}t^{d-1-j}z_j
     =q_b(t)-R_t(z)=0.
\end{equation*}
Thus the final value satisfies $c_d=0$, as required. The
preceding values define the free vectors $c_1,\ldots,c_{d-1}$,
and the block equations give $z=w+tu$. When $d=1$, the same
argument is the single equation $z_0=b_1+tb_0$.
Taking the union over $b\in\mathcal A$, we conclude that
\begin{equation}\label{eq:projection-entire-fibers}
 H_t=R_t^{-1}\bigl(t^dP_{1/t}(\mathcal E)\bigr),\qquad t\neq0.
\end{equation}

To calculate its dimension, define the linear coordinate map
\begin{equation*}
 J_t(z_0,\ldots,z_{d-1})
   \defeq \bigl(R_t(z),z_0,\ldots,z_{d-2}\bigr).
\end{equation*}
Its inverse is
\begin{equation*}
 J_t^{-1}(y,z_0,\ldots,z_{d-2})
 =\left(z_0,\ldots,z_{d-2},
           y-\sum_{j=0}^{d-2}t^{d-1-j}z_j\right).
\end{equation*}
Empty sums and lists are omitted when $d=1$.
For every fixed $t$, this map and its inverse are Lipschitz. By \eqref{eq:projection-entire-fibers},
we have
\begin{equation*}
 J_t(H_t)=t^dP_{1/t}(\mathcal E)\times(\Q_p^n)^{d-1}.
\end{equation*}
Lemma~\ref{lem:products} and the fact that multiplication by $t^d\neq0$
is a similarity give the exact dimension formula
\begin{equation}\label{eq:projection-fiber-dimension}
 \dimH H_t=(d-1)n+\dimH P_{1/t}(\mathcal E),\qquad t\neq0.
\end{equation}
This is the required dimension identity for each nonzero-parameter
slice.

\emph{Cancellation of the auxiliary dimensions.}
Apply Theorem~\ref{thm:quantitative-line-slicing} with $m=dn+1$.
The family $Y$ is nonempty and analytic, and its slope dimension is
$S+(d-1)n\leq dn$. By \eqref{eq:projection-fiber-dimension},
for Haar-almost every $t\neq0$ we have
\begin{equation*}
 (d-1)n+\dimH P_{1/t}(\mathcal E)
 =\dimH H_t\geq S+(d-1)n.
\end{equation*}
Subtracting $(d-1)n$ gives $\dimH P_{1/t}(\mathcal E)\geq S$.
Inversion is a similarity on each valuation annulus in
$\Q_p^\times$, so it preserves Hausdorff dimension and Haar-null
sets. This proves \eqref{eq:projection-ae}.

For $0\leq\sigma<S$, the exact slice identity gives
\begin{equation*}
 \{t\neq0:\dimH P_{1/t}(\mathcal E)\leq\sigma\}
 =\{t\neq0:\dimH H_t\leq\sigma+(d-1)n\}.
\end{equation*}
Since $\sigma+(d-1)n<S+(d-1)n$, the exceptional-set estimate
in the same theorem yields
\begin{align*}
 \dimH\{t\neq0:\dimH P_{1/t}(\mathcal E)\leq\sigma\}
 &\leq1-\frac{[S+(d-1)n]-[\sigma+(d-1)n]}{dn}\notag\\
 &=1-\frac{S-\sigma}{dn}.
\end{align*}
Inversion transfers this bound to $r\neq0$. The parameter $r=0$
is not exceptional because $\sigma<S$. This proves
\eqref{eq:projection-exceptional} and completes the proof.
\end{proof}

\subsection*{Reference parameters and maximal dimension}
We now replace the reference image $P_0(\mathcal E)$ by an
arbitrary evaluation image. Expanding the polynomial about the
chosen parameter gives a new coefficient set, obtained by a linear
map from the original one. The preceding theorem then applies
to this new family.

\begin{proposition}[An arbitrary reference parameter]
\label{prop:arbitrary-reference-parameter}
For the family \eqref{eq:projection-family}, let
$\mathcal E\subset(\Q_p^n)^{d+1}$ be nonempty and analytic, and fix
$r_0\in\Q_p$. Put $S_{r_0}\defeq \dimH P_{r_0}(\mathcal E)$. Then
\begin{equation*}
 \dimH P_r(\mathcal E)\geq S_{r_0}
 \quad\text{for Haar-almost every }r\in\Q_p.
\end{equation*}
For every $0\leq\sigma<S_{r_0}$,
\begin{equation*}
 \dimH\{r\in\Q_p:\dimH P_r(\mathcal E)\leq\sigma\}
 \leq1-\frac{S_{r_0}-\sigma}{dn}.
\end{equation*}
\end{proposition}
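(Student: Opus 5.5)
The plan is to reduce to Theorem~\ref{thm:polynomial-projections} by translating the parameter. For $v=(v_0,\ldots,v_d)\in\mathcal E$, write $b_j=a_jv_j$ and $f_b(r)=\sum_j b_jr^j$. Taylor expansion about $r_0$ gives
\begin{equation*}
 f_b(r_0+s)=\sum_{k=0}^d b'_k s^k,\qquad
 b'_k\defeq\sum_{j=k}^d\binom{j}{k}r_0^{j-k}b_j .
\end{equation*}
The map $b\mapsto b'$ is an invertible upper-triangular linear map of $(\Q_p^n)^{d+1}$, with diagonal entries $1$, and $b'_d=b_d$.

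We then define a new family $P'_s$ of the form \eqref{eq:projection-family}, with weights $a'_0=\cdots=a'_d=1$ (so $a'_d\neq0$). Its coefficient set is
\begin{equation*}
 \mathcal E'\defeq\{b'(v):v\in\mathcal E\}.
\end{equation*}
This set is nonempty, and it is analytic as a continuous (linear) image of the analytic set $\mathcal E$. By construction
\begin{equation*}
 P'_s(\mathcal E')=P_{r_0+s}(\mathcal E)\quad\text{for every } s\in\Q_p,
\end{equation*}
and in particular $P'_0(\mathcal E')=P_{r_0}(\mathcal E)$, whose dimension is $S_{r_0}$. The output dimension $n$ and the degree $d$ are unchanged, so the denominator $dn$ in the exceptional bound is the same.

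Applying Theorem~\ref{thm:polynomial-projections} to $(P'_s,\mathcal E')$ gives two conclusions:
\begin{itemize}
\item $\dimH P_{r_0+s}(\mathcal E)\geq S_{r_0}$ for Haar-almost every $s$;
\item for $0\leq\sigma<S_{r_0}$, the set of $s$ with $\dimH P_{r_0+s}(\mathcal E)\leq\sigma$ has Hausdorff dimension at most $1-(S_{r_0}-\sigma)/(dn)$.
\end{itemize}
The translation $s\mapsto r_0+s$ is an isometry of $\Q_p$ and preserves Haar measure. Hence both Haar-null sets and Hausdorff dimension transfer to the variable $r=r_0+s$, which gives both assertions of the proposition.

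There is no serious obstacle. The only points to check are the following. The reweighted family is a legitimate instance of \eqref{eq:projection-family}: its weights are $1$, and the leading weight is nonzero. The proof of Theorem~\ref{thm:polynomial-projections} only uses $\mathcal E$ through the weighted set $\mathcal A$, which here is exactly $\mathcal E'$. Finally, analyticity is preserved under the continuous linear change of coefficients.
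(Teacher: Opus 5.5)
Your proposal is correct and follows essentially the same route as the paper. You expand $P_{r_0+t}$ binomially into a new unit-weight coefficient set, which is a continuous linear image of $\mathcal E$ and hence analytic, apply Theorem~\ref{thm:polynomial-projections} with reference dimension $S_{r_0}$ and the unchanged denominator $dn$, and then transfer the conclusions back using translation invariance of Hausdorff dimension and of Haar-null sets.
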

\begin{proof}
Fix the reference parameter $r_0$. For $0\leq j\leq d$, define
\begin{equation*}
 b_j(v)\defeq\sum_{j'=j}^{d}\binom{j'}j r_0^{j'-j}a_{j'}v_{j'}.
\end{equation*}
The binomial theorem gives
\begin{equation*}
 P_{r_0+t}(v)=\sum_{j=0}^{d}t^jb_j(v),\qquad
 b_0(v)=P_{r_0}(v),\quad b_d(v)=a_dv_d.
\end{equation*}
Let $\mathcal E_{r_0}$ be the image of $\mathcal E$ under the
continuous linear map $v\mapsto(b_0(v),\ldots,b_d(v))$.
It is nonempty and analytic. For the unweighted degree-$d$ family
$Q_t(w_0,\ldots,w_d)=\sum_{j=0}^dt^jw_j$, we have
\begin{equation*}
 Q_t(\mathcal E_{r_0})=P_{r_0+t}(\mathcal E),\qquad
 Q_0(\mathcal E_{r_0})=P_{r_0}(\mathcal E).
\end{equation*}
All fixed weights of $Q_t$ are one, so its leading weight is
nonzero. Applying Theorem~\ref{thm:polynomial-projections} with
reference dimension $S_{r_0}$ gives the stated bounds with
denominator $dn$. Translation preserves Hausdorff dimension and
Haar-null sets, so these are precisely the required assertions.
\end{proof}

We next prove the maximal-dimension corollary. Although the
reference parameter is arbitrary, we only need a countable sequence
of references whose image dimensions approach the supremum.

\begin{proof}[Proof of Corollary~\ref{cor:maximal-projection-dimension}]
Since the images are nonempty subsets of $\Q_p^n$, we have
$0\leq S_*\leq n$. The case $S_*=0$ is immediate, so assume
that $S_*>0$. We first prove the almost-everywhere equality.
For each $j\geq1$, choose
$r_j\in\Q_p$ such that
\begin{equation*}
 S_*-\frac1j<S_j\defeq \dimH P_{r_j}(\mathcal E)\leq S_*.
\end{equation*}
In particular, $S_j\to S_*$ and $\sup_jS_j=S_*$.

For each fixed $j$, Proposition~\ref{prop:arbitrary-reference-parameter}
provides a Borel Haar-null set $Z_j$ outside which
$\dimH P_r(\mathcal E)\geq S_j$.
The union $Z\defeq \bigcup_{j\geq1}Z_j$ is still Haar-null. If $r\notin Z$,
all these inequalities hold simultaneously, and hence
\begin{equation*}
 \dimH P_r(\mathcal E)\geq\sup_{j\geq1}S_j=S_*.
\end{equation*}
The reverse inequality holds for every $r$, because $S_*$ is the
supremum over all evaluation dimensions. Thus
$\dimH P_r(\mathcal E)=S_*$ for Haar-almost every $r$, proving
\eqref{eq:projection-maximal-ae}. Only a countable union of null sets is used. In particular,
we did not assume that the supremum was attained, but the proof
shows that it is attained at almost every parameter.

It remains to prove the exceptional-set bound. Fix
$0\leq\sigma<S_*$, and let
\begin{equation*}
 E_\sigma\defeq \{r\in\Q_p:\dimH P_r(\mathcal E)\leq\sigma\}.
\end{equation*}
Since $S_j\to S_*>\sigma$, for all sufficiently large $j$ the
quantitative part of Proposition~\ref{prop:arbitrary-reference-parameter}
applies to this same set and gives
\begin{equation*}
 \dimH E_\sigma\leq1-\frac{S_j-\sigma}{dn}.
\end{equation*}
The left-hand side is independent of $j$. Taking the limit of the
right-hand side as $j\to\infty$ gives
\begin{equation*}
 \dimH E_\sigma\leq1-\frac{S_*-\sigma}{dn},
\end{equation*}
as required by \eqref{eq:projection-maximal-exceptional}.
This completes the proof.
\end{proof}

\begin{remark}[Dimension-preserving evaluations]
\label{rem:dimension-preserving-evaluations}
In Remark~\ref{rem:coefficient-dimension-counterexample}, we
constructed a compact set with $S_*=1$ and
$\dimH\mathcal E=2$. Thus the maximal evaluation dimension may
be smaller than the coefficient-set dimension. Suppose instead
that $S\defeq\dimH\mathcal E$ and $\dimH P_0(\mathcal E)=S$.
The incidence bounds in
Proposition~\ref{prop:polynomial-total-incidence} then become
\begin{equation*}
 S+1\leq\dimH\mathcal I^\times(\mathcal E)
       \leq\dimH\mathcal E+1=S+1,
\end{equation*}
so $\dimH\mathcal I^\times(\mathcal E)=S+1$.

Every fixed evaluation $P_r$ is a linear Lipschitz map. Consequently,
\begin{equation*}
 S=\dimH P_0(\mathcal E)\leq S_*
   \leq\dimH\mathcal E=S.
\end{equation*}
Thus $S_*=S$. If $\mathcal E$ is analytic,
Corollary~\ref{cor:maximal-projection-dimension} then gives
$\dimH P_r(\mathcal E)=S$ for Haar-almost every $r$.
In fact, when $\mathcal{E}$ is analytic, preservation of coefficient-set dimension at any evaluation forces its preservation by almost every evaluation.
\end{remark}

\begin{remark}[Sharpness and the scope of the projection theorem]
\label{rem:projection-sharpness}
Remark~\ref{rem:coefficient-dimension-counterexample} shows why
$S$ cannot in general be replaced by
$\min\{n,\dimH\mathcal E\}$ in the vector-valued setting. The almost-everywhere assertion cannot be replaced by an
assertion for every parameter. Take $d=1$ and $a_0=a_1=1$.
Choose $r_*\in\Q_p^\times$, and let
$B\subset\Q_p^n$ be compact with $S\defeq \dimH B>0$. The compact set
\begin{equation*}
 \mathcal E_{r_*}\defeq \{(-r_*x,x):x\in B\}
\end{equation*}
satisfies
\begin{equation*}
 P_r(\mathcal E_{r_*})=(r-r_*)B,
 \qquad
 \dimH P_r(\mathcal E_{r_*})=
 \begin{cases}
  0,&r=r_*,\\
  S,&r\neq r_*.
 \end{cases}
\end{equation*}
The reference image has dimension $S$, since $r_*\neq0$, but the
image collapses at $r_*$. For every $0\leq\sigma<S$, the exceptional
set is exactly $\{r_*\}$. 

However, we do not claim here that the bound on the
exceptional set is optimal. In some scalar cases, stronger bounds are known. For the scalar linear family
$P_r(x,y)\defeq x+ry$ and a nonempty Borel set
$\mathcal E\subset\Q_p^2$, put
$T\defeq\dimH\mathcal E$ and $S\defeq\dimH P_0(\mathcal E)$. If
$0\leq\sigma<S$, Shen's theorem \cite[Theorem~1.4]{Shen}, applied
with a threshold $u$ satisfying $\sigma<u<S\leq\min\{T,1\}$,
gives
\begin{equation*}
 \dimH\{r:\dimH P_r(\mathcal E)\leq\sigma\}
 \leq\max\{2u-T,0\}.
\end{equation*}
Here the set on the left is contained in the set where
$\dimH P_r(\mathcal E)<u$. Letting $u\downarrow\sigma$ gives
$\max\{2\sigma-T,0\}$, which is often strictly smaller than the
bound $1-S+\sigma$ given by the present theorem.
\end{remark}

\medskip
\section{Block-valued polynomial evaluations}
\label{sec:block-evaluations}

We use the block family \eqref{eq:block-evaluation-family}, with
its fixed degrees $d_i$, output dimensions $n_i$, and scalar weights.
Recall that $N=\sum_i n_i$ and that $\Lambda$ is given by
\eqref{eq:block-denominator}. For the lifts in this section and
Section~\ref{sec:anisotropic-covering}, put
\begin{equation}\label{eq:block-lifting-dimensions}
 h_i\defeq\max\{1,d_i\},\qquad
 \Lambda=\sum_{i=1}^k n_i h_i,\qquad
 L\defeq\Lambda-N=\sum_{i=1}^k n_i(h_i-1).
\end{equation}
Thus $\Lambda+1$ is the lifted ambient dimension and $L$ is the
number of free auxiliary coordinates. Only a constant block needs
padding from degree $0$ to degree bound $h_i=1$.

For total incidence, we group coefficients of equal powers and
apply Proposition~\ref{prop:polynomial-total-incidence}. For individual
images, we lift each block separately to retain the denominator
$\Lambda$. Both arguments keep the full joint coefficient set.

\subsection*{Joint coefficients and total incidence}
For any nonempty $\mathcal E\subset\mathcal V$, absorb the fixed
scalar weights into the coefficient vectors by setting
\begin{equation}\label{eq:block-absorbed-coefficients}
 b_{i,j}(v)\defeq a_{i,j}v_{i,j},\qquad 0\leq j\leq d_i.
\end{equation}
When $d_i=0$, adjoin $b_{i,1}=0$. The resulting tuples, with
$0\leq j\leq h_i$, form the joint coefficient set $\mathcal A$.
The padding changes neither the original degree $d_i$ nor any
evaluation image. The vectors $b_{i,j}$ may vanish, including the
leading ones. The map $v\mapsto b(v)$ is continuous and linear,
so $\mathcal A$ is analytic whenever $\mathcal E$ is analytic. Let
\begin{equation*}
 f_{i,b}(r)\defeq\sum_{j=0}^{h_i}b_{i,j}r^j,\qquad
 f_b(r)\defeq(f_{i,b}(r))_{i=1}^k,
\end{equation*}
we have
\begin{equation*}
 \{f_b(r):b\in\mathcal A\}=\widetilde P_r(\mathcal E),\qquad
 \{(b_{i,0})_{i=1}^k:b\in\mathcal A\}
       =\widetilde P_0(\mathcal E).
\end{equation*}

For $U\subset\Q_p$, define
\begin{equation*}
 \widetilde{\mathcal I}_U(\mathcal E)
 \defeq\{(r,\widetilde P_r(v)):r\in U,\ v\in\mathcal E\}.
\end{equation*}
Write $\widetilde{\mathcal I}(\mathcal E)$ for
$\widetilde{\mathcal I}_{\Q_p}(\mathcal E)$ and
$\widetilde{\mathcal I}^{\times}(\mathcal E)$ for
$\widetilde{\mathcal I}_{\Q_p^\times}(\mathcal E)$.

\begin{proposition}[Block incidence dimension from reference evaluations]
\label{prop:block-total-incidence}
For the family \eqref{eq:block-evaluation-family}, let
$\mathcal E\subset\mathcal V$ be any nonempty set, and put
$S_*\defeq\sup_{s\in\Q_p}\dimH\widetilde P_s(\mathcal E)$.
For every $r_0\in\Q_p$,
\begin{equation}\label{eq:block-total-incidence}
\begin{aligned}
 S_*+1
 &\leq\dimH\widetilde{\mathcal I}_{\Q_p\setminus\{r_0\}}(\mathcal E)\\
 &=\dimH\widetilde{\mathcal I}(\mathcal E)
 \leq\min\{\dimH\mathcal E+1,N+1\}.
\end{aligned}
\end{equation}
In particular,
\begin{equation}\label{eq:block-incidence-sandwich}
 \dimH\widetilde P_0(\mathcal E)+1
 \leq\dimH\widetilde{\mathcal I}^{\times}(\mathcal E)
 \leq\min\{\dimH\mathcal E+1,N+1\}.
\end{equation}
\end{proposition}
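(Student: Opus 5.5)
The plan is to reduce to Proposition~\ref{prop:polynomial-total-incidence} by regarding the block family as a single $\Q_p^N$-valued polynomial family. For this we group coefficients of equal powers. Put $h\defeq\max_i h_i$. For $b\in\mathcal A$ and $0\leq j\leq h$, let $B_j(b)\in\Q_p^N$ be the vector whose $i$th block is $b_{i,j}$ when $j\leq h_i$ and $0$ otherwise. Then $f_b(r)=\sum_{j=0}^h r^jB_j(b)$. The leading group $B_h$ may vanish, but Proposition~\ref{prop:polynomial-total-incidence} places no condition on the coefficient vectors; it only needs the leading scalar weight to be nonzero. So we take $n=N$, degree $h$, and all weights $a_j=1$. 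The grouped set $\mathcal A'\defeq\{(B_0(b),\ldots,B_h(b)):b\in\mathcal A\}$ satisfies $P_r(\mathcal A')=\widetilde P_r(\mathcal E)$ for every $r$, and in particular $P_0(\mathcal A')=\widetilde P_0(\mathcal E)$. Proposition~\ref{prop:polynomial-total-incidence} then gives
\begin{equation*}
\dimH\widetilde P_0(\mathcal E)+1\leq\dimH\widetilde{\mathcal I}^\times(\mathcal E).
\end{equation*}
The upper bound $\min\{\dimH\mathcal E+1,N+1\}$ is proved exactly as there. The map $(v,r)\mapsto(r,\widetilde P_r(v))$ is Lipschitz on bounded boxes and surjects onto $\widetilde{\mathcal I}(\mathcal E)$, so Lemma~\ref{lem:products} with $b=1$ applies, and the ambient space is $\Q_p^{N+1}$. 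This proves \eqref{eq:block-incidence-sandwich}, since $\widetilde{\mathcal I}^\times\subset\widetilde{\mathcal I}$.

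Next we pass to an arbitrary reference $s$ and an arbitrary removed point $r_0$, using translation as in Proposition~\ref{prop:arbitrary-reference-parameter}. Fix $s\in\Q_p$. Expand each block about $s$ by the binomial theorem. This gives a linear image $\mathcal E_s$ of the joint coefficients, and the unweighted grouped family $Q$ satisfies $Q_t(\mathcal E_s)=\widetilde P_{s+t}(\mathcal E)$. The preceding step then yields
\begin{equation*}
\dimH\widetilde P_s(\mathcal E)+1\leq\dimH\{(s+t,\widetilde P_{s+t}(\mathcal E)):t\neq0\}=\dimH\widetilde{\mathcal I}_{\Q_p\setminus\{s\}}(\mathcal E),
\end{equation*}
where the equality holds because the map $(t,y)\mapsto(s+t,y)$ is an isometry.

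Removing $r_0$ instead of $s$ costs nothing. The sets $\widetilde{\mathcal I}_{\Q_p\setminus\{s\}}$ and $\widetilde{\mathcal I}_{\Q_p\setminus\{r_0\}}$ differ by at most the two slices $\{s\}\times\widetilde P_s(\mathcal E)$ and $\{r_0\}\times\widetilde P_{r_0}(\mathcal E)$. Each slice has dimension at most $S_*$, which is strictly less than $S_*+1$. By countable stability, all three sets $\widetilde{\mathcal I}_{\Q_p\setminus\{s\}}$, $\widetilde{\mathcal I}_{\Q_p\setminus\{r_0\}}$ and $\widetilde{\mathcal I}$ therefore have dimension at least $\dimH\widetilde P_s(\mathcal E)+1$.

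The equality $\dimH\widetilde{\mathcal I}_{\Q_p\setminus\{r_0\}}=\dimH\widetilde{\mathcal I}$ needs a separate argument. Write
\begin{equation*}
\widetilde{\mathcal I}=\widetilde{\mathcal I}_{\Q_p\setminus\{r_0\}}\cup\bigl(\{r_0\}\times\widetilde P_{r_0}(\mathcal E)\bigr).
\end{equation*}
The slice has dimension at most $S_*$. The removed-point set has dimension at least $\dimH\widetilde P_{r_0}(\mathcal E)+1$, which exceeds the dimension of the slice. Countable stability then gives equality. Finally, taking the supremum over $s$ proves the first inequality in \eqref{eq:block-total-incidence}.

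The main obstacle is the grouping step. We must check that Proposition~\ref{prop:polynomial-total-incidence} tolerates coefficient groups that vanish, including the top group, and blocks of differing degree. Its proof goes through reversal and Theorem~\ref{thm:padic} with the adjoined monomial $t^{h+1}$. That monomial keeps the leading vector $(B_0,1)$ nonzero regardless of the other groups, so the argument is unaffected. I would verify this explicitly rather than rely on the weight normalization alone.
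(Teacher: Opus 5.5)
Your proposal is correct and follows essentially the paper's route: you group equal powers into $\Q_p^N$-valued coefficients with $h=\max_i h_i$, apply Proposition~\ref{prop:polynomial-total-incidence} to the unweighted family after a binomial re-centering, identify the punctured incidence set by translation, and use that the slice at $r_0$ has dimension below $\dimH\widetilde P_{r_0}(\mathcal E)+1$ to get the equality before taking the supremum. One small point: your ``two slices'' paragraph compares slice dimensions with $S_*+1$ before that lower bound is available (for a fixed $s$, $\dimH\widetilde P_{r_0}(\mathcal E)$ can exceed $\dimH\widetilde P_s(\mathcal E)+1$), but the paragraph is redundant, since monotonicity $\widetilde{\mathcal I}_{\Q_p\setminus\{s\}}\subset\widetilde{\mathcal I}$ together with the equality from your next paragraph already gives the full statement.
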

\begin{proof}
We first write the block family as one vector-valued polynomial
family. Set $h\defeq\max_i h_i\geq1$, and put $b_{i,j}=0$
for $h_i<j\leq h$. For each power, combine its coefficients
into the vector
\begin{equation*}
 c_j\defeq(b_{1,j},\ldots,b_{k,j})\in\Q_p^N,
 \qquad 0\leq j\leq h,
\end{equation*}
and let $\mathcal C\subset(\Q_p^N)^{h+1}$ be their joint coefficient
set. Then
\begin{equation*}
 \widetilde P_r(\mathcal E)
   =\left\{\sum_{j=0}^h r^jc_j:c\in\mathcal C\right\}.
\end{equation*}
The map from $\mathcal E$ to $\mathcal C$ is linear and
Lipschitz. 

Fix $r_0\in\Q_p$. For $c\in\mathcal C$, define
\begin{equation*}
 c_j^{(r_0)}\defeq
       \sum_{j'=j}^h\binom{j'}j r_0^{j'-j}c_{j'},
 \qquad 0\leq j\leq h,
\end{equation*}
and let $\mathcal C_{r_0}$ be the resulting joint coefficient set.
For the unweighted family
$Q_t(w_0,\ldots,w_h)\defeq\sum_{j=0}^h t^jw_j$, the binomial
identity gives
\begin{equation*}
 Q_t(\mathcal C_{r_0})=\widetilde P_{r_0+t}(\mathcal E),
 \qquad Q_0(\mathcal C_{r_0})=\widetilde P_{r_0}(\mathcal E).
\end{equation*}
Moreover, $\dimH\mathcal C_{r_0}\leq\dimH\mathcal E$ because the
coefficient transformation is linear and Lipschitz.

Apply Proposition~\ref{prop:polynomial-total-incidence} in target
dimension $N$ to $Q_t$ and $\mathcal C_{r_0}$. Translating the
parameter by $r_0$ identifies its nonzero-parameter incidence set
isometrically with
$\widetilde{\mathcal I}_{\Q_p\setminus\{r_0\}}(\mathcal E)$.
Consequently,
\begin{equation}\label{eq:block-reference-incidence-step}
\begin{aligned}
 \dimH\widetilde P_{r_0}(\mathcal E)+1
 &\leq\dimH\widetilde{\mathcal I}_{\Q_p\setminus\{r_0\}}(\mathcal E)\\
 &\leq\min\{\dimH\mathcal E+1,N+1\}.
\end{aligned}
\end{equation}
It remains to replace the reference dimension by its supremum.
Observe that the omitted slice
$\{r_0\}\times\widetilde P_{r_0}(\mathcal E)$ has strictly
smaller dimension than the punctured incidence set. Adjoining this
slice therefore leaves the dimension unchanged. Since $r_0$ was
arbitrary, \eqref{eq:block-reference-incidence-step} gives a lower
bound for the full incidence dimension at every reference.
Taking the supremum proves \eqref{eq:block-total-incidence}.
Then \eqref{eq:block-incidence-sandwich} follows immediately.
\end{proof}

If some evaluation preserves the dimension of $\mathcal E$,
then the lower and upper bounds agree. In that case,
$\dimH\widetilde{\mathcal I}(\mathcal E)=\dimH\mathcal E+1$. We next estimate the dimensions
of individual images. 

\subsection*{Individual evaluation dimensions}
For the incidence bound, it was enough to pad every block to a
common degree. Applying the slice theorem to this regrouping would only
give a denominator of $Nh$ for the exceptional set, where $h=\max_i h_i$. To obtain the 
smaller denominator $\Lambda\leq Nh$, we need to lift the blocks
separately. 

\begin{theorem}[Block-valued polynomial projections]
\label{thm:block-valued-projections}
For the family \eqref{eq:block-evaluation-family}, let
$\mathcal E\subset\mathcal V$ be nonempty and analytic, and put
$S\defeq\dimH\widetilde P_0(\mathcal E)$. Then
\begin{equation}\label{eq:block-projection-ae}
 \dimH\widetilde P_r(\mathcal E)\geq S
 \quad\text{for Haar-almost every }r\in\Q_p.
\end{equation}
For every $0\leq\sigma<S$,
\begin{equation}\label{eq:block-projection-exceptional}
 \dimH\{r\in\Q_p:\dimH\widetilde P_r(\mathcal E)\leq\sigma\}
 \leq1-\frac{S-\sigma}{\Lambda},
\end{equation}
where $\Lambda$ is defined in \eqref{eq:block-denominator}.
When all $d_i=0$, the images are independent of the parameter and
the exceptional sets are empty.
\end{theorem}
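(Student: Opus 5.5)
We mimic the proof of Theorem~\ref{thm:polynomial-projections}, but build the lifted line family block by block so that the ambient dimension is $\Lambda+1$ and the free auxiliary dimension is $L=\Lambda-N$. If all $d_i=0$, then $\widetilde P_r(\mathcal E)=\widetilde P_0(\mathcal E)$ for every $r$, and there is nothing to prove. If $S=0$, the claim is trivial. So assume $S>0$ and some $d_i\ge1$.

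\emph{Construction of the family.} Work with the padded coefficient set $\mathcal A$ from \eqref{eq:block-absorbed-coefficients}, so that block $i$ has degree bound $h_i\ge1$. For each block form the reversed polynomial
\begin{equation*}
 q_{i,b}(t)=\sum_{j=0}^{h_i}b_{i,j}t^{h_i-j}=t^{h_i}f_{i,b}(1/t).
\end{equation*}
Choose free vectors $c_{i,1},\dots,c_{i,h_i-1}\in\Q_p^{n_i}$ and put $c_{i,0}=b_{i,0}$, $c_{i,h_i}=0$. Then define the block slope $u_i=(c_{i,0},\dots,c_{i,h_i-1})$ and the block intercept $w_i=(b_{i,1}-c_{i,1},\dots,b_{i,h_i}-c_{i,h_i})$, exactly as in \eqref{eq:projection-lifted-line}.

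Concatenate over $i$ to get $u,w\in\Q_p^{\Lambda}$. Let $Y$ be the set of all such pairs $(u,w)$ as $b$ ranges over $\mathcal A$ and the free vectors range over their spaces. Then $Y$ is analytic, being a continuous image of $\mathcal A\times\Q_p^{L}$. The joint coefficient set is kept intact: the same $b$ is used in every block. This is essential, since the blocks are not independent.

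\emph{Slope set.} The slope set is
\begin{equation*}
 D=\{(b_{i,0},\text{free})_i\},
\end{equation*}
which, after reordering coordinates, equals $\widetilde P_0(\mathcal E)\times\Q_p^{L}$. By Lemma~\ref{lem:products}, $\dimH D=S+L$.

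\emph{Slices.} For $t\neq0$, apply the telescoping computation of the previous proof blockwise with $R^i_t(z_i)=\sum_j t^{h_i-1-j}z_{i,j}$. This gives
\begin{equation*}
 H_t=\{z:(R^i_t(z_i))_i\in \Delta_t\widetilde P_{1/t}(\mathcal E)\},
 \qquad \Delta_t=\operatorname{diag}(t^{h_i}I_{n_i}).
\end{equation*}
The reverse inclusion again uses the recursive construction of the $c_{i,j}$ separately in each block, with a common $b$.

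The main point needing care is this fibre identity and the resulting dimension formula. The map $\Delta_t$ is no longer a similarity, but it is an invertible linear map for fixed $t\neq0$, hence bi-Lipschitz, and that suffices. Applying the blockwise coordinate change $J^i_t$ in each block, we get that $H_t$ is bi-Lipschitz equivalent to $\Delta_t\widetilde P_{1/t}(\mathcal E)\times\Q_p^{L}$. Hence
\begin{equation*}
 \dimH H_t=L+\dimH\widetilde P_{1/t}(\mathcal E),\qquad t\neq0.
\end{equation*}

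\emph{Applying the slice theorem.} Apply Theorem~\ref{thm:quantitative-line-slicing} with $m=\Lambda+1$ and slope dimension $S+L\le\Lambda$. For almost every $t$ we get $\dimH H_t\ge S+L$. Subtracting $L$ gives $\dimH\widetilde P_{1/t}(\mathcal E)\ge S$ for almost every $t$.

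For the exceptional set, fix $0\le\sigma<S$ and note
\begin{equation*}
 \{t\neq0:\dimH\widetilde P_{1/t}(\mathcal E)\le\sigma\}=\{t\neq0:\dimH H_t\le \sigma+L\}.
\end{equation*}
Its dimension is at most
\begin{equation*}
 1-\frac{(S+L)-(\sigma+L)}{\Lambda}=1-\frac{S-\sigma}{\Lambda}.
\end{equation*}

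\emph{Transfer to $r$.} Inversion $t\mapsto1/t$ is a similarity on each valuation annulus of $\Q_p^\times$, so it preserves Hausdorff dimension and Haar-null sets; this transfers both bounds to $r\neq0$. The point $r=0$ is not exceptional, since $\dimH\widetilde P_0(\mathcal E)=S>\sigma$. This gives \eqref{eq:block-projection-ae} and \eqref{eq:block-projection-exceptional}.
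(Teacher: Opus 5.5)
Your proposal is correct and follows the paper's proof essentially step for step. Both arguments use the blockwise lift with free vectors $c_{i,j}$ over the padded joint coefficient set $\mathcal A$, the slope set $\widetilde P_0(\mathcal E)\times\Q_p^L$, the full-fibre identity $H_t=R_t^{-1}(A_t\widetilde P_{1/t}(\mathcal E))$ with the invertible coordinate change $J_t$, and then Theorem~\ref{thm:quantitative-line-slicing} with $m=\Lambda+1$ followed by inversion. Your separate treatment of the cases $S=0$ and all $d_i=0$ is harmless; the paper's construction covers the latter automatically through the padding $h_i=1$.
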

\begin{proof}
Recall that $\mathcal A$ is nonempty and analytic. We shall
construct an analytic line family and show that its slope set
and each nonzero-parameter slice have the same additional
$L$ dimensions.

For each $b\in\mathcal A$, choose free vectors
$c_{i,1},\ldots,c_{i,h_i-1}\in\Q_p^{n_i}$, and set
$c_{i,0}\defeq b_{i,0}$ and $c_{i,h_i}\defeq0$. Define slope and
intercept vectors in $\Q_p^\Lambda$ by
\begin{equation*}
 u_{i,j}\defeq c_{i,j},\qquad
 w_{i,j}\defeq b_{i,j+1}-c_{i,j+1},
 \qquad 1\leq i\leq k,\quad0\leq j<h_i.
\end{equation*}
Let $Y$ consist of all resulting pairs $(u,w)$. It is a continuous
image of $\mathcal A\times\Q_p^L$, hence is nonempty and analytic,
and parametrizes lines $t\mapsto(t,w+tu)$ in $\Q_p^{\Lambda+1}$.
As in the single-output case, we can read off the slope set.
After a fixed coordinate permutation, it follows from Lemma~\ref{lem:products} that
\begin{equation}\label{eq:block-slope-product}
 D=\widetilde P_0(\mathcal E)\times\Q_p^L,
 \qquad\dimH D=S+L.
\end{equation}

We now compute the slices. For
$z=(z_{i,j})_{i,j}\in\Q_p^\Lambda$, define the linear map
$R_t:\Q_p^\Lambda\to\Q_p^N$ by
\begin{equation}\label{eq:block-slice-map}
 (R_tz)_i\defeq\sum_{j=0}^{h_i-1}t^{h_i-1-j}z_{i,j},
 \qquad1\leq i\leq k.
\end{equation}
Surjectivity follows because the coefficient of $z_{i,h_i-1}$
is one in every output block. Set
\begin{equation*}
 A_t\defeq\operatorname{diag}
       (t^{h_1}I_{n_1},\ldots,t^{h_k}I_{n_k}),
 \qquad H_t\defeq\{w+tu:(u,w)\in Y\},
\end{equation*}
where $I_{n_i}$ is the identity matrix on $\Q_p^{n_i}$.
Substituting the slope and intercept coordinates in each block,
we obtain
\begin{equation}\label{eq:block-telescoping}
\begin{aligned}
 (R_t(w+tu))_i
 &=\sum_{j=0}^{h_i-1}t^{h_i-1-j}
                  (b_{i,j+1}-c_{i,j+1}+tc_{i,j})\\
 &=\sum_{j=0}^{h_i}t^{h_i-j}b_{i,j}
 =t^{h_i}f_{i,b}(1/t),\qquad t\neq0.
\end{aligned}
\end{equation}
We claim that the resulting slice is the full inverse image,
as in \eqref{eq:projection-entire-fibers}:
\begin{equation}\label{eq:block-entire-fibers}
 H_t=R_t^{-1}\bigl(A_t\widetilde P_{1/t}(\mathcal E)\bigr),
 \qquad t\neq0.
\end{equation}
The inclusion from left to right follows from
\eqref{eq:block-telescoping}. For the other inclusion, choose
$b\in\mathcal A$ such that $R_tz=A_tf_b(1/t)$. Set
$c_{i,0}=b_{i,0}$, and define successively
\begin{equation*}
 c_{i,j+1}\defeq b_{i,j+1}+tc_{i,j}-z_{i,j},
 \qquad0\leq j<h_i.
\end{equation*}
The terminal value is
\begin{equation*}
 c_{i,h_i}
 =\sum_{j=0}^{h_i}t^{h_i-j}b_{i,j}
       -\sum_{j=0}^{h_i-1}t^{h_i-1-j}z_{i,j}
 =t^{h_i}f_{i,b}(1/t)-(R_tz)_i=0.
\end{equation*}
The intermediate vectors are admissible free variables, and their block equations
give $z=w+tu$. This proves the claim. The calculation also holds
when $h_i=1$, in which case there are no free variables in that
block.

To compute the dimension of this inverse image, observe that
for fixed $t$ the map
\begin{equation*}
 J_tz\defeq\bigl(R_tz,(z_{i,j})_{1\leq i\leq k,\,0\leq j<h_i-1}\bigr)
 \in\Q_p^N\times\Q_p^L
\end{equation*}
is an invertible linear coordinate change. Its inverse retains the
listed free coordinates and recovers each final block from
\begin{equation*}
 z_{i,h_i-1}
 =y_i-\sum_{j=0}^{h_i-2}t^{h_i-1-j}z_{i,j}.
\end{equation*}
Here, we take empty sums to be zero. For $t\neq0$, $A_t$ is also invertible.
For each fixed $t$, these invertible linear maps are bi-Lipschitz.
By \eqref{eq:block-entire-fibers} and Lemma~\ref{lem:products},
it follows that
\begin{equation}\label{eq:block-fiber-dimension}
 \dimH H_t=L+\dimH\widetilde P_{1/t}(\mathcal E),
 \qquad t\neq0.
\end{equation}

We now apply Theorem~\ref{thm:quantitative-line-slicing} with
$m=\Lambda+1$. The slope dimension satisfies
$S+L\leq N+L=\Lambda$.
Equations \eqref{eq:block-slope-product} and
\eqref{eq:block-fiber-dimension} give, for almost every $t\neq0$,
\begin{equation*}
 L+\dimH\widetilde P_{1/t}(\mathcal E)
 =\dimH H_t\geq S+L.
\end{equation*}
Subtracting $L$ gives the desired lower bound. For
$0\leq\sigma<S$, the exceptional-set assertion and the same
dimension identities give
\begin{align*}
 &\dimH\{t\neq0:
            \dimH\widetilde P_{1/t}(\mathcal E)\leq\sigma\}\notag\\
 &\qquad\leq1-\frac{(S+L)-(\sigma+L)}{\Lambda}
 =1-\frac{S-\sigma}{\Lambda}.
\end{align*}
Inversion is a similarity on each valuation annulus, so it preserves
Hausdorff dimension and Haar-null sets on $\Q_p^\times$, as proved
in the argument for Theorem~\ref{thm:polynomial-projections}.
We therefore obtain both conclusions for $r\neq0$.
At $r=0$, the image has dimension $S$, so this parameter is
not exceptional when $\sigma<S$. This proves
\eqref{eq:block-projection-ae} and
\eqref{eq:block-projection-exceptional}, completing the proof.
\end{proof}

\subsection*{Reference parameters and normalization}
We again consider replacing $r=0$ by an arbitrary reference parameter.
We translate
each block separately; this preserves $d_i$, $h_i$, and hence
$\Lambda$. We then choose a sequence of reference dimensions
approaching the supremum, as in
Corollary~\ref{cor:maximal-projection-dimension}.

\begin{corollary}[Reference parameters and maximal block-image dimension]
\label{cor:block-reference-maximal}
Under the hypotheses of Theorem~\ref{thm:block-valued-projections},
fix $r_0\in\Q_p$ and set
$S_{r_0}\defeq\dimH\widetilde P_{r_0}(\mathcal E)$. Then
\begin{equation*}
 \dimH\widetilde P_r(\mathcal E)\geq S_{r_0}
 \quad\text{for Haar-almost every }r\in\Q_p,
\end{equation*}
and, for every $0\leq\sigma<S_{r_0}$,
\begin{equation}\label{eq:block-reference-exceptional}
 \dimH\{r\in\Q_p:\dimH\widetilde P_r(\mathcal E)\leq\sigma\}
 \leq1-\frac{S_{r_0}-\sigma}{\Lambda}.
\end{equation}
More strongly, with
$S_*\defeq\sup_{s\in\Q_p}\dimH\widetilde P_s(\mathcal E)$, one has
\begin{equation}\label{eq:block-maximal-ae}
 \dimH\widetilde P_r(\mathcal E)=S_*
 \quad\text{for Haar-almost every }r\in\Q_p,
\end{equation}
and, for every $0\leq\sigma<S_*$,
\begin{equation}\label{eq:block-maximal-exceptional}
 \dimH\{r\in\Q_p:\dimH\widetilde P_r(\mathcal E)\leq\sigma\}
 \leq1-\frac{S_*-\sigma}{\Lambda}.
\end{equation}
If $\dimH\widetilde P_{r_0}(\mathcal E)=\dimH\mathcal E$ for some
$r_0$, then $\dimH\widetilde P_r(\mathcal E)=\dimH\mathcal E$
for Haar-almost every $r$.
\end{corollary}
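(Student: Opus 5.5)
The plan mirrors the single-block argument of Proposition~\ref{prop:arbitrary-reference-parameter} and Corollary~\ref{cor:maximal-projection-dimension}, applied blockwise.

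\emph{Reduction to the reference $0$.} Fix $r_0$. In each block $i$, expand about $r_0$ by setting
\begin{equation*}
 b_{i,j}(v)\defeq\sum_{j'=j}^{d_i}\binom{j'}{j}r_0^{j'-j}a_{i,j'}v_{i,j'},
 \qquad 0\leq j\leq d_i .
\end{equation*}
The binomial theorem then gives $P^i_{r_0+t}(v)=\sum_j t^jb_{i,j}(v)$, with $b_{i,0}=P^i_{r_0}$ and $b_{i,d_i}=a_{i,d_i}v_{i,d_i}$. Let $\mathcal E_{r_0}\subset\mathcal V$ be the image of $\mathcal E$ under the continuous linear map $v\mapsto(b_{i,j}(v))$; it is nonempty and analytic. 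For the unweighted block family with the same $n_i,d_i$ (hence the same $\Lambda$) and all weights equal to $1$, the $t$-evaluation of $\mathcal E_{r_0}$ is $\widetilde P_{r_0+t}(\mathcal E)$, and the $0$-evaluation is $\widetilde P_{r_0}(\mathcal E)$. Theorem~\ref{thm:block-valued-projections} applied to this family gives the almost-everywhere bound $\geq S_{r_0}$ in $t$ and the exceptional bound with denominator $\Lambda$. Translating $t\mapsto r_0+t$, which is an isometry of $\Q_p$ and so preserves Hausdorff dimension and Haar-null sets, yields the first two assertions, including \eqref{eq:block-reference-exceptional}. Degree-zero blocks are unaffected, since they are constant in $r$.

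\emph{Maximal dimension.} If $S_*=0$ there is nothing to prove, because the images are nonempty. Otherwise choose $r_j$ with $S_j\defeq\dimH\widetilde P_{r_j}(\mathcal E)>S_*-1/j$. The previous step gives Haar-null sets $Z_j$ outside which $\dimH\widetilde P_r(\mathcal E)\geq S_j$. Outside the null set $\bigcup_jZ_j$ we therefore have $\dimH\geq S_*$, and equality holds because $S_*$ is a supremum; this proves \eqref{eq:block-maximal-ae}. For $\sigma<S_*$, apply \eqref{eq:block-reference-exceptional} with reference $r_j$ for every large $j$ (those with $S_j>\sigma$) to the same exceptional set. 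Letting $j\to\infty$ gives \eqref{eq:block-maximal-exceptional}.

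\emph{The final statement.} Each $\widetilde P_r$ is linear, hence Lipschitz on bounded pieces, so $\dimH\widetilde P_r(\mathcal E)\leq\dimH\mathcal E$ for every $r$ by countable stability. If equality holds at $r_0$, then
\begin{equation*}
 \dimH\mathcal E=S_{r_0}\leq S_*\leq\dimH\mathcal E ,
\end{equation*}
so \eqref{eq:block-maximal-ae} gives the claim. There is no real obstacle here. The only point needing care is to check that the blockwise translation keeps every leading weight nonzero and every $d_i$ unchanged, so that $\Lambda$ is preserved. This holds because the top coefficient $b_{i,d_i}=a_{i,d_i}v_{i,d_i}$ is untouched and the new family has all weights equal to $1$.
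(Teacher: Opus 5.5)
Your proposal is correct and follows essentially the same route as the paper. You re-expand each block about $r_0$ into an unweighted family with the same $n_i,d_i$, so $\Lambda$ is unchanged. You then apply Theorem~\ref{thm:block-valued-projections} and translate, take a countable sequence of references with $S_j\to S_*$ to obtain the maximal statements, and use $\dimH\widetilde P_{r_0}(\mathcal E)\leq S_*\leq\dimH\mathcal E$ for the final assertion.
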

\begin{proof}
We first prove the fixed-reference statements. Use the absorbed
coefficients from \eqref{eq:block-absorbed-coefficients}, indexed by
the original degrees $0\leq j\leq d_i$. The auxiliary zero in a
constant block is not needed for the following expansion:
\begin{equation*}
\begin{aligned}
 f_{i,b}(r_0+t)&=\sum_{j=0}^{d_i}\widehat b_{i,j}t^j,\\
 \widehat b_{i,j}&\defeq\sum_{j'=j}^{d_i}
                    \binom{j'}j r_0^{j'-j}b_{i,j'}.
\end{aligned}
\end{equation*}
The joint transformation $b\mapsto\widehat b$ is continuous and linear.
Its leading coefficient in block $i$ is still $b_{i,d_i}$, and
its constant-coefficient image is exactly
$\widetilde P_{r_0}(\mathcal E)$.
Apply Theorem~\ref{thm:block-valued-projections} to this transformed
analytic coefficient set, with $d_i+1$ coefficient vectors and all
fixed weights equal to one in block $i$. Its lifting parameter is
again $\sum_i n_i h_i=\Lambda$. The resulting evaluation
images are exactly $\widetilde P_{r_0+t}(\mathcal E)$.
Translation preserves Hausdorff dimension and Haar-null sets,
so the fixed-reference assertions follow.

We now prove the maximal-dimension assertions. If $S_*=0$,
all image dimensions are zero. Otherwise, choose a sequence
$r_j\in\Q_p$ with
$S_j\defeq\dimH\widetilde P_{r_j}(\mathcal E)\to S_*$.
For each $j$, the preceding assertion gives
$\dimH\widetilde P_r(\mathcal E)\geq S_j$ outside a Haar-null set.
Outside the union of these countably many null sets,
\begin{equation*}
 \dimH\widetilde P_r(\mathcal E)\geq\sup_j S_j=S_*.
\end{equation*}
The reverse inequality holds for every $r$ by definition of $S_*$,
proving \eqref{eq:block-maximal-ae}. For a fixed
$0\leq\sigma<S_*$, apply \eqref{eq:block-reference-exceptional}
with $r_0=r_j$ whenever $S_j>\sigma$ and then let $j\to\infty$.
The exceptional set being estimated is independent of $j$, which
gives \eqref{eq:block-maximal-exceptional}.
Finally, every fixed $\widetilde P_r$ is linear and Lipschitz, so
\begin{equation*}
 \dimH\widetilde P_{r_0}(\mathcal E)
 \leq S_*\leq\dimH\mathcal E.
\end{equation*}
Together with \eqref{eq:block-maximal-ae}, this proves the
last assertion and completes the proof.
\end{proof}

\medskip
\section{Anisotropic covering numbers and reference parameters}
\label{sec:anisotropic-covering}

We continue with the fixed block family
\eqref{eq:block-evaluation-family} and the lifting parameters
\eqref{eq:block-lifting-dimensions}. Unlike the individual-image
dimension theorem, the covering comparison requires no regularity
of the coefficient set. Its hypothesis is boundedness of the
absorbed coefficients.
The different block scales will be reduced to one residue modulus.
Since this changes the coefficient set with the scale, we need
an estimate that is uniform in that set.

We first give the covering definition and state the quantitative
result. We then prove a uniform finite-ring comparison using
Corollary~\ref{cor:dhar-directions}. After normalizing the block
scales, that comparison gives the quantitative theorem.
Borel--Cantelli will then give one exceptional null set for all
reference parameters.

\subsection*{Covering numbers and the quantitative statement}
We use the following definition of anisotropic covering number.
A Euclidean rectangular-partition version is given in
\cite[Section~2.1]{BenardHe}. Here the rectangles are lattice
cosets. Consequently, counting the cosets that meet a set gives
exactly its minimal covering number.

\begin{definition}[Anisotropic covering numbers]
\label{def:block-rectangles}
Fix $\boldsymbol\alpha=(\alpha_1,\ldots,\alpha_k)$ with
$\alpha_i\geq0$. For $0<\delta<1$, put
\begin{equation}\label{eq:block-rectangle-lattice}
 m_i(\delta)\defeq
       \left\lceil\alpha_i\log_p(1/\delta)\right\rceil,
 \qquad
 \mathcal Q_\delta^{\boldsymbol\alpha}
       \defeq\prod_{i=1}^k p^{m_i(\delta)}\Z_p^{n_i}.
\end{equation}
A $\delta$-rectangle is a translate of
$\mathcal Q_\delta^{\boldsymbol\alpha}$. Its $i$th block is a ball
of radius $\delta^{\alpha_i}$. For $A\subset\prod_i\Q_p^{n_i}$, let
$N_\delta^{\boldsymbol\alpha}(A)$ denote the number of these cosets
that meet $A$, equivalently this is the least number of such rectangles
covering $A$. This number is finite for bounded $A$. It is $0$ for
the empty set and $\infty$ for an unbounded set. At a discrete scale
$\delta=p^{-\ell}$, one has $m_i(\delta)=\lceil\alpha_i\ell\rceil$.
\end{definition}

The connection with diagonal expansion is given by an exact
rescaling identity. Set
\begin{equation*}
 \mathsf A_\delta
 \defeq \operatorname{diag}
       \bigl(p^{-m_i(\delta)}I_{n_i}\bigr)_{i=1}^k,
\end{equation*}
and let $N_1^{\mathrm{iso}}$ denote the covering number by
translates of $\Z_p^N$. Since
$\mathsf A_\delta\mathcal Q_\delta^{\boldsymbol\alpha}=\Z_p^N$,
we have
\begin{equation*}
 N_\delta^{\boldsymbol\alpha}(A)
 =N_1^{\mathrm{iso}}(\mathsf A_\delta A).
\end{equation*}
Thus the anisotropic covering number is the unit-scale covering
number after diagonal rescaling. The proof below uses a different
normalization, contracting the blocks to a common residue modulus
so that the coefficient bounds remain uniform.

Let $\mathcal E\subset\mathcal V$ be nonempty, and let
$\mathcal A$ be its joint absorbed coefficient set from
\eqref{eq:block-absorbed-coefficients}. Recall that
\begin{equation*}
 \widetilde P_r(\mathcal E)
     =\left\{\left(\sum_{j=0}^{h_i}b_{i,j}r^j\right)_i:
                                                    b\in\mathcal A\right\}.
\end{equation*}
Only boundedness of the absorbed coefficient set $\mathcal A$
is required below. Boundedness of $\mathcal E$ is sufficient,
but need not be necessary when some fixed scalar weights vanish.

For a compact parameter ball $B=a+c\Z_p$, $c\neq0$, we
write
\begin{equation*}
 \mu_B(A)\defeq\frac{\mathcal L^1(A\cap B)}{\mathcal L^1(B)}
\end{equation*}
for its normalized Haar measure.

\begin{theorem}[Quantitative anisotropic covering comparison]
\label{thm:block-anisotropic-covering}
For the family \eqref{eq:block-evaluation-family}, fix
$\boldsymbol\alpha\in[0,\infty)^k$, and let
$\mathcal E\subset\mathcal V$ be nonempty with bounded joint absorbed
coefficient set $\mathcal A$. For a compact parameter ball
$B\subset\Q_p$ of positive radius, put
\begin{equation*}
 M_{\delta,B}\defeq
          \max_{s\in B}N_\delta^{\boldsymbol\alpha}
                              (\widetilde P_s(\mathcal E)).
\end{equation*}
This maximum exists and is finite. There is a constant $C$ such that,
for every $\ell\geq1$, $\delta=p^{-\ell}$, and $0<\eta<1$,
\begin{equation}\label{eq:block-anisotropic-maximal-tail}
 \mu_B\left\{r\in B:
       N_\delta^{\boldsymbol\alpha}(\widetilde P_r(\mathcal E))
                      <\eta M_{\delta,B}\right\}
 \leq C(1+\ell)^{(\Lambda+1)/\Lambda}\eta^{1/\Lambda}.
\end{equation}
In particular, the same bound holds with $M_{\delta,B}$ replaced by
$N_\delta^{\boldsymbol\alpha}(\widetilde P_{r_0}(\mathcal E))$,
uniformly for all $r_0\in B$. The constant $C$ depends only on
$p,(n_i),(d_i),\boldsymbol\alpha,B$, and the fixed bound for
$\mathcal A$, not on $\ell,\eta,r_0$, or the particular coefficient
set within that bound. 
\end{theorem}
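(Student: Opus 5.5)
The plan is a finite-ring version of the lifting argument of Theorem~\ref{thm:block-valued-projections}. Dhar's estimate, in the form of Corollary~\ref{cor:dhar-directions}, replaces the slicing theorem, and the bound comes from a single incidence count.

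\emph{Reductions.} First note that every $\widetilde P_s(\mathcal E)$ lies in a fixed bounded box for $s\in B$, because $\mathcal A$ is bounded. Moreover, $N_\delta^{\boldsymbol\alpha}(\widetilde P_s(\mathcal E))$ depends only on $s$ modulo a fixed power $p^{\ell'}$ with $\ell'\leq C\ell+C$. So it takes finitely many values, and the maximum $M_{\delta,B}$ exists. Next I normalize.
\begin{itemize}
\item Write $B=a+c\Z_p$ and substitute $r=a+cx$. This replaces $\mathcal A$ by a linear image with a bound depending only on $B$, and $\mu_B$ becomes Haar measure on $\Z_p$.
\item Multiply by a fixed power of $p$ so that all coefficients are integral.
\item Put $\ell'\defeq\max_i\lceil\alpha_i\ell\rceil$ (if $\ell'=0$ the statement is trivial) and multiply block $i$ by $p^{\ell'-m_i(\delta)}$.
\end{itemize}
These multiplications are contractions, so the coefficients stay integral. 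An anisotropic $\delta$-rectangle becomes a residue class modulo $p^{\ell'}$ in $\Z_p^N$. Hence $N^{\boldsymbol\alpha}_\delta(\widetilde P_r(\mathcal E))$ equals the number of residue classes mod $p^{\ell'}$ met by the rescaled image, and this number depends only on $x\bmod p^{\ell'}$. Since $\ell'\asymp\ell$, the factor $(1+\ell')$ may replace $(1+\ell)$. Everything now lives in $\mathcal R_{\ell'}$, with constants independent of the particular coefficient set.

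\emph{The count.} Fix a maximizing reference $s$ and let $T\subset\mathcal R_{\ell'}$ be the set of bad parameter classes, so that $\mu_B(\text{bad})=\#T/p^{\ell'}$. I lift as in \eqref{eq:block-entire-fibers}, reduced modulo $p^{\ell'}$, using the per-block telescoping construction with free vectors $c_{i,j}$. The parameter is arranged so that the slope blocks $c_{i,0}$ are the values at the reference $s$. Consider the lines $t\mapsto(t,w+tu)$ in $\mathcal R_{\ell'}^{\Lambda+1}$ and the set
\[
A\defeq\{(t,z):t\in T,\ z\in H_t\}.
\]
There are three estimates.
\begin{itemize}
\item \emph{Directions.} The slope set is (reference image mod $p^{\ell'}$)$\times\mathcal R_{\ell'}^L$. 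It gives $J\geq M_{\delta,B}\,p^{\ell'L}$ distinct directions $[1:u]$.
\item \emph{Incidences.} Each such line meets $A$ in $h=\#T$ points.
\item \emph{Size of $A$.} By the coordinate change $J_t$, which remains invertible over $\mathcal R_{\ell'}$ because $R_t$ has a unit coefficient on the last slot, each slice $H_t$ has exactly $N(\text{image at }t)\,p^{\ell'L}$ points. Hence $\#A<\#T\cdot\eta M_{\delta,B}\,p^{\ell'L}$.
\end{itemize}
Corollary~\ref{cor:dhar-directions} with $m=\Lambda+1$ then gives
\[
\#T\,\eta\,M p^{\ell'L}\;\geq\;c(\ell'+1)^{-(\Lambda+1)}\,\frac{Mp^{\ell'L}(\#T)^{\Lambda+1}}{p^{\ell'\Lambda}}.
\]
This yields $(\#T/p^{\ell'})^{\Lambda}\leq C(\ell'+1)^{\Lambda+1}\eta$, which is exactly \eqref{eq:block-anisotropic-maximal-tail}. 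The bound with $M_{\delta,B}$ replaced by $N_\delta^{\boldsymbol\alpha}(\widetilde P_{r_0}(\mathcal E))$ follows because that quantity is at most $M_{\delta,B}$.

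\emph{Main obstacle.} The difficulty is making the slope encode the value at $s$ while keeping the parameter integral. In Section~\ref{sec:block-evaluations} this used the reversal $r=1/t$, which does not preserve $\Z_p$ or residue classes. First I would try to avoid reversal by expanding about $s$ and using a lift whose slope is the leading coefficient of the reversed polynomial in $x=r-s$. If that fails, I would split $B\setminus\{s\}$ into the annuli $|r-s|=p^{-j}$ with $0\leq j\leq\ell'$, using that the set of $r$ within $p^{-\ell'}$ of $s$ has measure $p^{-\ell'}$, which is negligible. On each annulus I would substitute $r=s+p^{j}/t$ with $t\in\Z_p^\times$, rescale block $i$ by $p^{-jh_i}$ (equivalently, apply $A_t$), and check that the covering numbers change by a factor depending on $j$ that is absorbed into the modulus. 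Summing over $j$ costs at most another factor $(\ell+1)$. That is the delicate bookkeeping I expect to fight with; the Dhar count itself is routine once this normalization is in place.
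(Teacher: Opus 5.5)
Your architecture is the paper's: the same normalization of $B$ to $\Z_p$ and of the block scales to one residue modulus, the same per-block telescoping lift, and the same Dhar count ($K_0q^L$ directions $[1:u]$, $\#T$ points per line, fibers of $R_t$ of size $q^L$). The gap is the step you deferred. As you wrote it, the count is not correct. With the reference translated to $0$, the lift gives $(R_t(w+tu))_i=t^{h_i}f_{i,b}(1/t)$, so the slice over $t$ encodes the image at $1/t$, not at $t$. Your first idea, expanding about $s$ and reversing in $x=r-s$, is the same reversal and does not remove this.

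The paper's fix is to accept the reversal on units, where it is harmless. Restrict first to bad parameters $T\subset\mathcal R_\nu^\times$, put $U=T^{-1}$, and set $X=\{(t,z):t\in U,\ R_tz\in\operatorname{diag}(t^{h_i}I_{n_i})\,\mathsf P_{t^{-1}}(F)\}$. Since $t\mapsto t^{-1}$ permutes $\mathcal R_\nu^\times$ and $\operatorname{diag}(t^{h_i}I_{n_i})$ is invertible, $\#U=\#T$ and $\#A_t=\#\mathsf P_{t^{-1}}(F)$. Your three estimates then go through verbatim.

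Your sketched repair for non-unit parameters fails in two places.

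\emph{The rescaling.} Multiplying block $i$ by $p^{-jh_i}$ is an expansion. It destroys integrality and changes the residue counts in a set-dependent, block-dependent way, so nothing is simply ``absorbed into the modulus.'' The right move is to contract the coefficients instead. For the shell $p^a\mathcal R_\nu^\times$, the paper uses $F^{(a)}=\{(p^{aj}b_{i,j})_{i,j}:b\in F\}$. This set stays integral and has the same constant coefficients, so the reference image and $M_{\delta,B}$ are unchanged. It also satisfies $\mathsf P_u(F^{(a)})=\mathsf P_{p^au}(F)$, so the counts are literally preserved. You then apply the unit-parameter bound to $F^{(a)}$; this is exactly where uniformity of the constant in $F$ is needed.

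\emph{The summation.} Summing over shells does not cost an extra factor $(1+\ell)$. The map $u\mapsto p^au$ is $p^a$-to-one on units, so shell $a$ contributes at most $p^{-a}$ times the unit-parameter bound. The geometric sum gives only $(1-p^{-1})^{-1}$. An extra $(1+\ell)$ would give the exponent $(2\Lambda+1)/\Lambda$, which does not prove the stated bound.

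Smaller points:
\begin{itemize}
\item The residue class of $s$ is not ``negligible''; it is simply not bad, since its count is $M_{\delta,B}$ and $\eta<1$.
\item Once you multiply by $p^\kappa$ to make the coefficients integral, the rectangles shrink too. The common modulus is therefore $p^{\kappa+\ell'}$, not $p^{\ell'}$.
\item For the same reason, the case $\ell'=0$ is not trivial when $\kappa>0$. The paper takes $\nu=\kappa+\max\{1,m_1,\ldots,m_k\}$.
\end{itemize}
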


To prove the theorem, we first establish a finite-ring estimate
with a constant independent of the coefficient set and the
reference parameter. We then normalize the anisotropic scales
and apply this estimate. We give the finite-ring argument next.

\subsection*{A uniform finite-ring comparison}
The following lemma is stated independently of the weighted
$\Q_p$-family, for arbitrary finite-ring coefficients and degree
bounds $h_i\geq1$. These bounds need not be attained. This is the
form needed after normalization and reduction modulo a power of $p$.

A finite-field antecedent is the algebraic-curve incidence estimate of
Ellenberg, Oberlin, and Tao \cite[Corollary~1.10]{EOT}. A corresponding
evaluation comparison follows by applying their estimate to polynomial
graphs through the distinct points of a reference image.

\begin{lemma}[Uniform finite-ring evaluation comparison]
\label{lem:block-finite-ring-comparison}
Let $\nu\geq1$, $\mathcal R\defeq\mathcal R_\nu$, and $q\defeq p^\nu$.
Fix integers $k\geq1$, output dimensions $n_i\geq1$, and degree
bounds $h_i\geq1$. Set
$\Lambda=\sum_i n_i h_i$ and $L=\sum_i n_i(h_i-1)$.
For a nonempty set
\begin{equation*}
 F\subset\prod_{i=1}^k(\mathcal R^{n_i})^{h_i+1},
 \qquad
 \mathsf P_r(b)\defeq
       \left(\sum_{j=0}^{h_i}b_{i,j}r^j\right)_{i=1}^k,
\end{equation*}
one has, for every $r_0\in\mathcal R$ and $0<\eta<1$,
\begin{equation}\label{eq:block-finite-ring-comparison}
 \frac1q\#\{r\in\mathcal R:
          \#\mathsf P_r(F)<\eta\,\#\mathsf P_{r_0}(F)\}
 \leq C_{p,\Lambda}(1+\nu)^{(\Lambda+1)/\Lambda}
                   \eta^{1/\Lambda}.
\end{equation}
The same bound holds with $\#\mathsf P_{r_0}(F)$ replaced by
$\max_{u\in\mathcal R}\#\mathsf P_u(F)$. No nonvanishing
assumption is imposed on the coefficient vectors. The constant $C$ is independent of
$\nu,F,r_0$, and $\eta$.
\end{lemma}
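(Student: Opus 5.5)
The plan is to reduce, by translation and scaling, to parameters at unit distance from the reference. Then we lift the reversed polynomials to full lines in $\mathcal R^{\Lambda+1}$, restrict these lines to the bad parameters, and compare Corollary~\ref{cor:dhar-directions} with an exact fiber count. Since $\#\mathsf P_{r_0}(F)\leq\max_u\#\mathsf P_u(F)$, the bad set for the maximum contains the bad set for $r_0$. So it suffices to prove the bound with the maximum, taking $r_0$ to be a maximizer. Put $M\defeq\#\mathsf P_{r_0}(F)$; the parameter $r=r_0$ is never bad because $\eta<1$.

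\emph{Reduction.} For $0\leq k<\nu$ let $T_k\defeq\{r: r-r_0\in p^k\mathcal R^\times\}$. For each $k$, substitute $r=r_0+p^ks$ and expand binomially. This gives new coefficients $b'_{i,j}=p^{kj}\sum_{j'\geq j}\binom{j'}{j}r_0^{j'-j}b_{i,j'}$ in $\mathcal R$, with the same degree bounds $h_i$ and $\mathsf P'_0(F')=\mathsf P_{r_0}(F)$. This is why no nonvanishing assumption can be used: the leading coefficients may become $0$. Each $r\in T_k$ arises from exactly $p^k$ unit values of $s$. Therefore $\#(\text{bad }r\in T_k)=p^{-k}\#(\text{bad unit }s)$, and it suffices to prove
\begin{equation*}
 \#\{s\in\mathcal R^\times:\#\mathsf P_s(F)<\eta\,\#\mathsf P_0(F)\}\leq C(1+\nu)^{(\Lambda+1)/\Lambda}\eta^{1/\Lambda}q
\end{equation*}
uniformly in $F$. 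Summing $p^{-k}$ over $k$ then costs only a factor $p/(p-1)$.

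\emph{Lift.} This step copies the proof of Theorem~\ref{thm:block-valued-projections} over $\mathcal R$. For $b\in F$, set $q_{i,b}(t)=\sum_{j}b_{i,j}t^{h_i-j}$ and $Q(t)=\{(q_{i,b}(t))_i:b\in F\}$. Define $R_t$ as in \eqref{eq:block-slice-map}. For a set $T\subset\mathcal R^\times$, let
\begin{equation*}
 A_T\defeq\{(t,z)\in\mathcal R^{1+\Lambda}:t\in T,\ R_tz\in Q(t)\}.
\end{equation*}
The telescoping identity \eqref{eq:block-telescoping} is a polynomial identity, so it holds verbatim over $\mathcal R$. For each $b$ and each choice of free vectors $c\in\mathcal R^L$, the line $t\mapsto(t,w+tu)$ with slope $u=((b_{i,0})_i,c)$ meets $A_T$ in at least $\#T$ points. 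The directions $[1:u]$ are distinct for distinct $u$, so there are $J=M q^L$ of them. Corollary~\ref{cor:dhar-directions} with $m=\Lambda+1$ and $h=\#T$ then gives
\begin{equation*}
 \#A_T\geq c_{p,\Lambda}(\nu+1)^{-(\Lambda+1)}M q^{L}(\#T)^{\Lambda+1}q^{-\Lambda}.
\end{equation*}
For the upper count, $R_t$ is surjective with every fiber of size $q^L$, because the triangular coordinate change $J_t$ is invertible over $\mathcal R$. Hence $\#A_T=q^L\sum_{t\in T}\#Q(t)$. For a unit $t$, $Q(t)=A_t\mathsf P_{1/t}(F)$ with $A_t$ invertible, so $\#Q(t)=\#\mathsf P_{1/t}(F)$. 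Now take $T=\{t\in\mathcal R^\times:\#\mathsf P_{1/t}(F)<\eta M\}$, which is in bijection with the bad unit parameters. Then $\#A_T<q^L\#T\,\eta M$. Comparing the two bounds and cancelling $Mq^L\#T$ gives $(\#T)^\Lambda\leq C(\nu+1)^{\Lambda+1}\eta q^\Lambda$, which is the claimed bound after taking $\Lambda$th roots.

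\emph{Main obstacle.} I expect the main difficulty to be the nonunit parameters, where reversal $t=1/r$ is unavailable over $\mathcal R$. The shell decomposition above is the fix. The points to verify carefully are that the rescaled coefficients stay in $\mathcal R$ with unchanged $h_i$ and reference image, and that the constant in the unit-parameter estimate is genuinely independent of $F$. Uniformity comes from using only $\#F$-free quantities: $M$, $q^L$ and the fiber size.
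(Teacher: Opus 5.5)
Your proposal is correct and follows essentially the same route as the paper. Both prove the unit-parameter case by lifting the reversed polynomials to the lines with slopes $((b_{i,0})_i,c)$, comparing Corollary~\ref{cor:dhar-directions} with the exact fiber count $q^L$, and then summing the shells $p^a\mathcal R^\times$ with weights $p^{-a}$. The only difference is cosmetic: you fold the binomial translation to $r_0$ and the dilation by $p^k$ into one substitution, whereas the paper treats $r_0=0$ first and translates afterwards.
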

\begin{proof}
We first prove the lemma for $r_0=0$ and unit parameters.
Define
\begin{equation*}
 K_0\defeq\#\mathsf P_0(F),\qquad
 T\defeq\{r\in\mathcal R^\times:
                         \#\mathsf P_r(F)<\eta K_0\},
 \qquad U\defeq T^{-1}.
\end{equation*}
If $T$ is empty, the unit-parameter bound is immediate.
Assume that $T$ is nonempty. For $t\in U$, define
\begin{equation*}
 A_t\defeq
    \operatorname{diag}(t^{h_i}I_{n_i})_{i=1}^k\,
                  \mathsf P_{t^{-1}}(F).
\end{equation*}
Every $t\in U$ is a unit, so $\#A_t=\#\mathsf P_{t^{-1}}(F)<\eta K_0$.
Use the finite-ring version of \eqref{eq:block-slice-map}, namely
\begin{equation*}
 (R_tz)_i\defeq
       \sum_{j=0}^{h_i-1}t^{h_i-1-j}z_{i,j},
 \qquad z\in\mathcal R^\Lambda,
\end{equation*}
and form the incidence set
\begin{equation*}
 X\defeq\{(t,z)\in U\times\mathcal R^\Lambda:R_tz\in A_t\}.
\end{equation*}
We estimate $\#X$ in two ways. First, the coefficient of
$z_{i,h_i-1}$ in each block is $1$. Hence $R_t$ is surjective
and every fiber contains exactly $q^L$ points. It follows that
\begin{equation}\label{eq:block-finite-incidence-upper}
 \#X=q^L\sum_{t\in U}\#A_t
       \leq\eta K_0q^L\#T.
\end{equation}

For the lower bound, we construct lines meeting $X$ in many
points. For each $a=(b_{i,0})_i\in\mathsf P_0(F)$, exists $b\in F$ such that $P_0(b)=a$.
For every choice of auxiliary vectors
$c_{i,j}\in\mathcal R^{n_i}$, $1\leq j<h_i$, set
$c_{i,0}\defeq b_{i,0}$ and $c_{i,h_i}\defeq0$, and consider the line
\begin{equation*}
 \mathcal L_{b,c}(t)\defeq
 \left(t,
       (b_{i,j+1}-c_{i,j+1}+tc_{i,j})_{i,\,0\leq j<h_i}\right),
 \qquad t\in\mathcal R.
\end{equation*}
Write $z(t)$ for the second component. The same calculation
as in \eqref{eq:block-telescoping}, now over $\mathcal R$, gives
\begin{align*}
 (R_tz(t))_i
 &=\sum_{j=0}^{h_i-1}t^{h_i-1-j}
             (b_{i,j+1}-c_{i,j+1}+tc_{i,j})\notag\\
 &=\sum_{j=0}^{h_i}t^{h_i-j}b_{i,j}
  =t^{h_i}\mathsf P^i_{t^{-1}}(b),\qquad t\in U.
\end{align*}
Hence each such line meets $X$ in at least $\#T$ points. Its direction
has the primitive representative
\[
 (1,(c_{i,j})_{i,\,0\leq j<h_i}).
\]
The constant slope blocks range over $\mathsf P_0(F)$ and
the free slope blocks range over $\mathcal R^L$. Since the
first coordinate is $1$, distinct such tuples represent distinct
projective directions. Thus the number of directions is exactly
$K_0q^L$.

Apply Corollary~\ref{cor:dhar-directions}, which follows from
Dhar's set estimate \cite[Theorem~1.8]{DharSet}, in ambient dimension
$\Lambda+1$. It gives
\begin{equation}\label{eq:block-finite-incidence-lower}
 \#X\geq c_{p,\Lambda}(1+\nu)^{-(\Lambda+1)}
        \frac{K_0q^L(\#T)^{\Lambda+1}}{q^\Lambda}.
\end{equation}
Combining \eqref{eq:block-finite-incidence-upper} and
\eqref{eq:block-finite-incidence-lower}, and cancelling
$K_0q^L\#T>0$, yields
\begin{equation*}
 \left(\frac{\#T}{q}\right)^\Lambda
       \leq C_{p,\Lambda}(1+\nu)^{\Lambda+1}\eta.
\end{equation*}
Taking the $\Lambda$th root proves the unit-parameter estimate.
In particular, the constant is independent of $F$.

We next include the nonunit parameters. For $0\leq a<\nu$,
define
\begin{equation*}
 F^{(a)}\defeq
       \{(p^{aj}b_{i,j})_{i,j}:b\in F\}.
\end{equation*}
Then $\mathsf P_u(F^{(a)})=\mathsf P_{p^a u}(F)$ and
$\mathsf P_0(F^{(a)})=\mathsf P_0(F)$. Let
\begin{equation*}
 T_a\defeq\{u\in\mathcal R^\times:
                         \#\mathsf P_u(F^{(a)})<\eta K_0\}.
\end{equation*}
The map $u\mapsto p^a u$ sends $\mathcal R^\times$ onto
$p^a\mathcal R^\times$, and each fiber has size $p^a$.
Consequently, the number of exceptional parameters in this shell
is $p^{-a}\#T_a$. If we let
\begin{equation*}
 \mathcal B\defeq
 \{r\in\mathcal R:\#\mathsf P_r(F)<\eta K_0\}.
\end{equation*}
Then
\begin{equation*}
    \begin{aligned}
 \frac{\#\mathcal B}{q}
 &=\sum_{a=0}^{\nu-1}p^{-a}\frac{\#T_a}{q}\\
 &\leq C_{p,\Lambda}(1+\nu)^{(\Lambda+1)/\Lambda}
       \eta^{1/\Lambda}\sum_{a=0}^{\nu-1}p^{-a}\\
 &\leq \frac{C_{p,\Lambda}}{1-p^{-1}}
       (1+\nu)^{(\Lambda+1)/\Lambda}\eta^{1/\Lambda}.
\end{aligned}
\end{equation*}
This proves \eqref{eq:block-finite-ring-comparison} for $r_0=0$.

It remains to treat an arbitrary reference $r_0\in\mathcal R$.
For this, replace the coefficients by
\begin{equation*}
 \widehat b_{i,j}\defeq
       \sum_{j'=j}^{h_i}\binom{j'}j r_0^{j'-j}b_{i,j'}.
\end{equation*}
The binomial identity gives
$\sum_j\widehat b_{i,j}t^j=\sum_jb_{i,j}(r_0+t)^j$ in $\mathcal R$.
The transformed constant-coefficient image is $\mathsf P_{r_0}(F)$,
and translating the parameter preserves counting measure. Applying
the already proved case gives
\eqref{eq:block-finite-ring-comparison} with the same constant.
Finally, the parameter set is finite, so a maximizing reference
$r_0\in\mathcal R$ exists. Apply the estimate at that reference
to obtain the maximal assertion. This completes the proof.
\end{proof}

\subsection*{Normalization and simultaneous reference parameters}
We now prove the quantitative covering theorem. We identify covering numbers with residue-class counts.
After a fixed initial normalization, all scale-dependent changes
will be contractions. We can then apply the preceding lemma
uniformly to the resulting coefficient sets.

\begin{proof}[Proof of Theorem~\ref{thm:block-anisotropic-covering}]
Write $B=a+c\Z_p$ with $c\neq0$. We first put the parameter
in $\Z_p$ by writing $r=a+cu$. For $b\in\mathcal A$, the
new coefficients are
\begin{equation*}
 \beta_{i,j}(b)\defeq
       c^j\sum_{j'=j}^{h_i}\binom{j'}j a^{j'-j}b_{i,j'},
 \qquad
 \sum_{j=0}^{h_i}b_{i,j}(a+cu)^j
       =\sum_{j=0}^{h_i}\beta_{i,j}(b)u^j.
\end{equation*}
Since $\mathcal A$ is bounded, the new joint coefficient set
is bounded in terms of $B$, the block degrees, and the bound
for $\mathcal A$. We may therefore choose a fixed integer
$\kappa\geq0$ such that
$p^\kappa\beta_{i,j}(b)\in\Z_p^{n_i}$ for every $b \in \mathcal{A}$ and indices $i,j$.

Fix $\delta=p^{-\ell}$ and put
\begin{equation*}
 m_i\defeq\lceil\alpha_i\ell\rceil,\qquad
 m_*\defeq\max\{1,m_1,\ldots,m_k\},\qquad
 \nu\defeq\kappa+m_*.
\end{equation*}
Multiply the $i$th output block, and all coefficients in that block, by
$p^{\kappa+m_*-m_i}$. The rescaled coefficients
\begin{equation*}
 \widehat b_{i,j}(b)\defeq
           p^{\kappa+m_*-m_i}\beta_{i,j}(b)
\end{equation*}
are integral. Observe that $m_*-m_i\geq0$.
After multiplication by the fixed initial factor, every further
multiplication is therefore a contraction. In particular, the
coefficient bound remains uniform as $\ell$ increases.

Let $F\subset\prod_i(\mathcal R_\nu^{n_i})^{h_i+1}$ be the
reduction of the rescaled coefficient set. We now check that the
covering rectangles correspond exactly to residue classes.
For two output vectors $y,y'$, we have
\begin{equation*}
 y-y'\in\prod_i p^{m_i}\Z_p^{n_i}
 \quad\Longleftrightarrow\quad
 \bigl(p^{\kappa+m_*-m_i}(y_i-y_i')\bigr)_i
                               \in p^\nu\Z_p^N.
\end{equation*}
Consequently, with the evaluation maps from
Lemma~\ref{lem:block-finite-ring-comparison},
\begin{equation}\label{eq:block-covering-residue-identity}
 N_\delta^{\boldsymbol\alpha}
       (\widetilde P_{a+cu}(\mathcal E))
       =\#\mathsf P_{\bar u}(F),\qquad u\in\Z_p,
\end{equation}
where $\bar u$ is reduction modulo $p^\nu$.
Thus, at each fixed scale, the covering number depends only on
the parameter residue class. Its maximum on $B$ is
$\max_{v\in\mathcal R_\nu}\#\mathsf P_v(F)$ which is finite.

The parametrization $u\mapsto a+cu$ pushes normalized Haar measure on
$\Z_p$ to $\mu_B$. Each residue class modulo $p^\nu$ has mass $p^{-\nu}$.
Applying the maximal version of
Lemma~\ref{lem:block-finite-ring-comparison} to
\eqref{eq:block-covering-residue-identity} therefore bounds the
left-hand side of \eqref{eq:block-anisotropic-maximal-tail} by
\[
 C_{p,\Lambda}(1+\nu)^{(\Lambda+1)/\Lambda}\eta^{1/\Lambda}.
\]
Since $\nu=\kappa+\max\{1,\lceil\alpha_1\ell\rceil,
\ldots,\lceil\alpha_k\ell\rceil\}=O(1+\ell)$, this proves
\eqref{eq:block-anisotropic-maximal-tail} with the stated dependencies.
For every $r_0\in B$, the reference covering number is at most
$M_{\delta,B}$. Its exceptional set is therefore contained in
the set just estimated. This proves the remaining assertion
and completes the proof.
\end{proof}

Taking $\eta=\delta^\varepsilon$ gives the explicit estimate
\begin{equation}\label{eq:block-anisotropic-power-tail}
 \mu_B\left\{r\in B:
       N_\delta^{\boldsymbol\alpha}(\widetilde P_r(\mathcal E))
                      <\delta^\varepsilon M_{\delta,B}\right\}
 \leq C(1+\ell)^{(\Lambda+1)/\Lambda}
                     p^{-\ell\varepsilon/\Lambda},
 \qquad\delta=p^{-\ell}.
\end{equation}
The right-hand side is summable in $\ell$, so Borel--Cantelli
applies. Since the maximum over references is already included
at each scale, we obtain an eventual comparison with all references
in a fixed compact ball. We now use a countable exhaustion to
obtain the following simultaneous statement.

\begin{corollary}[A common exceptional set for all reference parameters]
\label{cor:block-anisotropic-common-reference}
Under the hypotheses of Theorem~\ref{thm:block-anisotropic-covering},
there is a Haar-full set $G\subset\Q_p$ such that, for every $r\in G$,
every compact parameter ball $B\subset\Q_p$ of positive radius, and
every $\varepsilon>0$, there is
$\delta_0=\delta_0(r,B,\varepsilon)>0$ for which
\begin{equation}\label{eq:block-anisotropic-compact-uniform}
 N_\delta^{\boldsymbol\alpha}(\widetilde P_r(\mathcal E))
       \geq\delta^\varepsilon
          \max_{s\in B}N_\delta^{\boldsymbol\alpha}
                              (\widetilde P_s(\mathcal E)),
 \qquad 0<\delta<\delta_0.
\end{equation}
Here $\delta$ may be any real scale, and $r$ need not belong to $B$.
In particular, the same set $G$ works simultaneously for every
$r_0\in\Q_p$ and every $\varepsilon>0$ in the comparison
\begin{equation}\label{eq:block-anisotropic-arbitrary-reference}
 N_\delta^{\boldsymbol\alpha}(\widetilde P_r(\mathcal E))
       \geq\delta^\varepsilon
             N_\delta^{\boldsymbol\alpha}
                              (\widetilde P_{r_0}(\mathcal E))
 \quad\text{for all sufficiently small }\delta.
\end{equation}
The full-measure set may depend on the fixed coefficient set and
$\boldsymbol\alpha$, but not on the reference parameter or
$\varepsilon$. On each fixed compact reference ball, the scale threshold
is uniform in the reference parameter.
\end{corollary}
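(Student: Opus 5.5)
The proof reduces the corollary to the power-tail estimate \eqref{eq:block-anisotropic-power-tail}, applied at dyadic-in-$p$ scales. The steps are Borel--Cantelli on each of countably many reference balls, a countable intersection of the resulting full-measure sets, and an interpolation to real scales $\delta$.

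First we fix a countable family of reference balls that exhausts all compact balls: take $B_K\defeq p^{-K}\Z_p$ for $K\geq0$. Every compact ball $B$ of positive radius lies in some $B_K$, so $\max_{s\in B}\leq\max_{s\in B_K}$. It therefore suffices to prove \eqref{eq:block-anisotropic-compact-uniform} with $B=B_K$. Likewise, for parameters $r$ we treat each $B_K$ separately and note that $\Q_p=\bigcup_KB_K$. This is why $r$ need not lie in $B$: we compare $r\in B_{K'}$ with references in $B_K$ by working in the larger ball $B_{\max(K,K')}$.

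Next, fix $K$ and a rational $\varepsilon>0$, and apply \eqref{eq:block-anisotropic-power-tail} on $B_K$ with $\varepsilon/2$ in place of $\varepsilon$. The bad sets $\mathcal B_\ell\defeq\{r\in B_K:N_{p^{-\ell}}^{\boldsymbol\alpha}(\widetilde P_r(\mathcal E))<p^{-\ell\varepsilon/2}M_{p^{-\ell},B_K}\}$ have $\mu_{B_K}$-measures that are summable in $\ell$. By Borel--Cantelli, almost every $r\in B_K$ lies in only finitely many of them. No measurability of $\mathcal E$ is needed, because by \eqref{eq:block-covering-residue-identity} each $\mathcal B_\ell$ is a finite union of residue classes. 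We let $G$ be the complement of the union, over $K$ and rational $\varepsilon$, of the corresponding limsup null sets. This set is Haar-full and depends only on $\mathcal E$ and $\boldsymbol\alpha$. For a general $\varepsilon$ we use any rational $\varepsilon'<\varepsilon$. The resulting threshold depends on $r$, $K$ and $\varepsilon$ but not on the reference, since the bound already controls the maximum over $B_K$. This yields the uniformity of the threshold in the reference.

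The remaining step, which is the main technical obstacle, is passing from $\delta=p^{-\ell}$ to arbitrary real $\delta$. For $p^{-\ell-1}<\delta\leq p^{-\ell}$ the exponents $m_i(\delta)=\lceil\alpha_i\log_p(1/\delta)\rceil$ lie between $\lceil\alpha_i\ell\rceil$ and $\lceil\alpha_i(\ell+1)\rceil$, and these differ by at most $\lceil\alpha_i\rceil$. Covering numbers are monotone under refinement of the lattice. Each coset of a coarser lattice splits into at most $p^{\sum_in_i\lceil\alpha_i\rceil}$ cosets of the finer one, and this bound is a constant $C_0$. We bracket the real-scale count between the counts at $\ell$ and $\ell+1$, and the maximum over $B$ transforms the same way. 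This gives
\[N_\delta^{\boldsymbol\alpha}(\widetilde P_r)\geq N_{p^{-\ell}}^{\boldsymbol\alpha}(\widetilde P_r)\geq p^{-\ell\varepsilon/2}M_{p^{-\ell},B}\geq C_0^{-1}p^{-\ell\varepsilon/2}M_{p^{-\ell-1},B}\geq C_0^{-1}p^{-\ell\varepsilon/2}M_{\delta,B}.\]
For small $\delta$, the factor $C_0^{-1}p^{-\ell\varepsilon/2}$ is at least $\delta^{\varepsilon}$. The boundary case $\alpha_i=0$ is trivial. Finally, \eqref{eq:block-anisotropic-arbitrary-reference} follows by choosing $B$ to be any ball containing $r_0$.
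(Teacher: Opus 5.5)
Your proposal is correct and follows the paper's proof essentially step for step. Both run Borel--Cantelli with the power-tail bound on the exhausting balls $p^{-K}\Z_p$, remove countably many null sets indexed by the ball and a countable set of exponents, and pass to real scales via the bracketing $N_{p^{-\ell}}^{\boldsymbol\alpha}\leq N_\delta^{\boldsymbol\alpha}\leq N_{p^{-(\ell+1)}}^{\boldsymbol\alpha}\leq p^{\sum_i n_i\lceil\alpha_i\rceil}N_{p^{-\ell}}^{\boldsymbol\alpha}$, with the differences being only cosmetic: rational exponents instead of $1/j$, inserting $\varepsilon/2$ before rather than after Borel--Cantelli, and your explicit remark that the bad sets are finite unions of residue classes.
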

\begin{proof}
We first prove the conclusion at the discrete scales. For each
integer $A\geq0$, let $B_A\defeq p^{-A}\Z_p$. For fixed $A\geq0$ and $\varepsilon>0$, define the bad sets
\begin{equation*}
 \mathcal B_\ell\defeq
 \left\{r\in B_A:
 N_{p^{-\ell}}^{\boldsymbol\alpha}
       (\widetilde P_r(\mathcal E))
 <p^{-\ell\varepsilon}M_{p^{-\ell},B_A}\right\}.
\end{equation*}
By \eqref{eq:block-anisotropic-power-tail},
\begin{equation*}
 \sum_{\ell=1}^{\infty}\mu_{B_A}(\mathcal B_\ell)
 \leq C\sum_{\ell=1}^{\infty}
 (1+\ell)^{(\Lambda+1)/\Lambda}
 p^{-\ell\varepsilon/\Lambda}<\infty.
\end{equation*}
The Borel--Cantelli lemma therefore gives
$\mu_{B_A}(\limsup_{\ell\to\infty}\mathcal B_\ell)=0$.
Thus almost every $r\in B_A$ belongs to only finitely many
$\mathcal B_\ell$. Equivalently, for such $r$ there exists
$\ell_0=\ell_0(r,A,\varepsilon)$ such that
\begin{equation*}
 N_{p^{-\ell}}^{\boldsymbol\alpha}
       (\widetilde P_r(\mathcal E))
 \geq p^{-\ell\varepsilon}M_{p^{-\ell},B_A}
 \qquad\text{for every }\ell\geq\ell_0.
\end{equation*}
Remove the exceptional null sets for all
$A\geq0$ and all $\varepsilon=1/j$, $j\geq1$, and denote the resulting
Haar-full subset of $\Q_p$ by $G$. For $0<\delta<1$, a comparison
with exponent $1/j<\varepsilon$ implies the one with exponent
$\varepsilon$. Thus $G$ works for every positive exponent.

Given $r\in G$ and a compact parameter ball $B$, choose $A$ so large
that $\{r\}\cup B\subset B_A$. The maximal comparison on $B_A$
implies the one on $B$, with a scale threshold independent of the
reference point in $B$. This proves
\eqref{eq:block-anisotropic-compact-uniform} at discrete scales.
Observe that we have removed only countably many null sets.
The maximum over references was taken before applying
Borel--Cantelli, so no uncountable intersection of full-measure
sets is involved.

It remains to pass from the discrete scales to all real scales.
Set
\begin{equation*}
 C_{\boldsymbol\alpha}\defeq
             p^{\sum_i n_i\lceil\alpha_i\rceil}.
\end{equation*}
If $p^{-(\ell+1)}<\delta\leq p^{-\ell}$, the lattice inclusions
from \eqref{eq:block-rectangle-lattice} and their indices give,
for every bounded set $Y$,
\begin{equation*}
 N_{p^{-\ell}}^{\boldsymbol\alpha}(Y)
 \leq N_\delta^{\boldsymbol\alpha}(Y)
 \leq N_{p^{-(\ell+1)}}^{\boldsymbol\alpha}(Y)
 \leq C_{\boldsymbol\alpha}
                         N_{p^{-\ell}}^{\boldsymbol\alpha}(Y).
\end{equation*}
Apply the discrete-scale comparison with exponent $\varepsilon/2$.
It follows that
\begin{equation*}
 N_\delta^{\boldsymbol\alpha}(\widetilde P_r(\mathcal E))
       \geq C_{\boldsymbol\alpha}^{-1}p^{-\ell\varepsilon/2}
                         M_{\delta,B}
       \geq\delta^\varepsilon M_{\delta,B}
\end{equation*}
for all sufficiently large $\ell$.
For any fixed $r_0$, choose a compact ball $B$ containing it.
The maximal comparison on $B$ implies
\eqref{eq:block-anisotropic-arbitrary-reference}. This completes
the proof.
\end{proof}

We can now deduce Theorem~\ref{thm:intro-anisotropic-maximality}.
If $\mathcal E$ is bounded, its absorbed coefficient set
$\mathcal A$ is bounded. The preceding corollary therefore
applies and gives all the stated conclusions.

\begin{remark}[Compact reference ranges are necessary for uniform thresholds]
\label{rem:block-reference-threshold}
The exceptional null set is independent of the reference.
However, the scale threshold need not be uniform over all of
$\Q_p$. To see this, take
$P_s(a,b)=a+sb$ and $\mathcal E=\{(0,b):b\in\Z_p\}$.
The isotropic covering numbers satisfy
\begin{equation*}
 P_s(\mathcal E)=s\Z_p,
 \qquad N_{p^{-\ell}}(P_{p^{-a}}(\mathcal E))=p^{a+\ell}
                       \quad(a\geq0,\ \ell\geq1).
\end{equation*}
At each fixed scale the reference covering numbers are unbounded as
$a\to\infty$, whereas each fixed evaluation image has a finite
covering number. Thus the compact reference range in
\eqref{eq:block-anisotropic-compact-uniform} cannot generally be
replaced by all of $\Q_p$.
\end{remark}

\medskip
\bibliographystyle{amsalpha}

\end{document}